\documentclass[11pt,reqno]{amsart}
\usepackage{vmargin}
\setmargins{2.5cm}{1.5cm}{16cm}{24cm}{10pt}{1cm}{opt}{2cm}

\usepackage[T1]{fontenc}
\usepackage{fourier}

\usepackage{amssymb}
\usepackage{amsmath}
\usepackage{amsthm}
\usepackage{amsfonts}
\usepackage{enumitem}
\usepackage{color}
\usepackage{hyperref}

\newcommand{\R}{\mathbb{R}}
\newcommand{\A}{\mathcal{A}}
\newcommand{\N}{\mathbb{N}}
\newcommand{\Z}{\mathbb{Z}}
\newcommand{\T}{\mathbb{T}}
\newcommand{\MM}{\mathcal{M}}
\newcommand{\PP}{\mathcal{P}}

\theoremstyle{plain}
\newtheorem{defi}{Definition}[section]
\newtheorem{prop}[defi]{Proposition}
\newtheorem{thm}[defi]{Theorem}
\newtheorem{cor}[defi]{Corollary}
\newtheorem{lema}[defi]{Lemma}
\newtheorem{remark}[defi]{Remark}

\numberwithin{equation}{section}

\begin{document}

\title[Regularity and rates in nonlocal homogenization]{\texorpdfstring{$C^{\sigma+\alpha}$}
{C^(sigma+alpha)} regularity for fractional Hamilton--Jacobi equations,
\texorpdfstring{\\}{ } and rates of convergence in periodic homogenization}

\author[]{Adina Ciomaga}
\address{Adina Ciomaga:
Universit\'e Paris Cit\'e, CNRS, Sorbonne Universit\'e, Laboratoire Jacques-Louis Lions (LJLL), F-75006 Paris, France;
O. Mayer Mathematics Institute, Romanian Academy - Ia\c si Branch, 700506 Ia\c si, Romania.}
\email{\tt adina@math.univ-paris-diderot.fr}

\author[]{Andrei Rodriguez-Paredes}
\address{Andrei Rodriguez-Paredes:
Departamento de Matem\'atica, Facultad de Ciencias F\'isicas y Matem\'aticas, Universidad de Concepci\'on, Avda. Esteban Iturra s/n, Barrio Universitario, Concepci\'on, Chile.}
\email{\tt andreirodriguez@udec.cl}

\author[]{Erwin Topp}
\address{Erwin Topp:
Instituto de Matem\'atica, Universidade Federal do Rio de Janeiro,
Rio de Janeiro -- RJ, 21941-909, Brazil.}
\email{\tt etopp@im.ufrj.br}

\begin{abstract}
We study two related questions for nonlocal equations of Bellman type: the regularity of solutions when the set of controls depends on the state variable, and the periodic homogenization of stationary nonlocal Hamilton--Jacobi equations driven by uniformly elliptic integro-differential operators of order $\sigma\in(0,2)$, whose kernels depend simultaneously on the slow variable, the fast periodic variable and the control. The effective equation is again a nonlocal Bellman equation, but its supremum runs over a set of viscosity Mather measures that varies with the space variable, in a merely H\"older continuous way.
We first develop a $C^{\sigma+\alpha}$ regularity theory for nonlocal Bellman equations with H\"older continuous, state-dependent index sets, extending the Schauder estimates of Caffarelli--Silvestre and Serra, in a general framework independent of homogenization. We then use these estimates as a source of compactness to construct a classical solution of the effective problem, unique among bounded classical solutions. Finally, combining this regularity with discounted approximate correctors, we make the perturbed test function method quantitative, and prove that the whole family of solutions converges to the effective solution, with the explicit rate $\epsilon^{\sigma\alpha/(2+\alpha)}$, where $\alpha$ is the H\"older exponent of the data.
\end{abstract}

\subjclass[2020]{Primary 35R09, 35B27; Secondary 35B65, 49L25, 35J60.}

\keywords{Nonlocal Bellman equations, fractional HJ equations,
Schauder estimates, state-dependent control sets, periodic homogenization,
convergence rates.}

\maketitle

\section{Introduction}\label{secIntro}

The main motivation of this paper is the study of homogenization phenomena for stationary nonlocal Hamilton--Jacobi (HJ) equations of the form
\begin{equation}\label{eq:HJ-eps}
	u^\epsilon(x) + \sup_{\theta \in \Theta}
	\big\{ -L^\theta_{x,\frac{x}{\epsilon}} u^\epsilon(x) -\ell^\theta\big(x,\tfrac{x}{\epsilon}\big) \big\}
	= 0 \quad \text{in } \R^N,
\end{equation}
where $\epsilon \in (0,1)$ is the homogenization scale, $\Theta$ is a compact metric space, $\T^N$ is the $N$-dimensional flat torus, $L^\theta_{x,y}$ is a nonlocal diffusion operator (described in detail below),  and $\ell : \R^N \times \T^N \times \Theta \to \R$ is a bounded continuous \emph{cost} function. Here we have written $\ell^\theta(x,y) = \ell(x,y,\theta)$, and will adopt the same notation for every function depending on $\theta$.

The family $(L^\theta_{x,y})_{(x,y,\theta)\in\R^N\times\T^N\times\Theta}$ consists of nonlocal operators in L\'evy form, with oscillating kernels, defined for smooth bounded functions $u : \R^N \to \R$ by
	\begin{equation}\label{eq:Lxy}
		L^\theta_{x,y} u(x) := \mathrm{P.V.} \int_{\R^N} \big(u(x+z) - u(x)\big)\, K^\theta(x,y,z)\, dz,
	\end{equation}
where the kernel $K^\theta : \R^N \times \T^N \times (\R^N \setminus \{0\}) \to \R$ is Borel measurable, symmetric, uniformly elliptic of order $\sigma \in (0,2)$, and homogeneous of degree $-(N+\sigma)$ in $z$, see Section~\ref{secApplications}.

Since the pioneering works~\cite{LPV86, E89, E92}, homogenization of degenerate elliptic HJ equations has been extensively studied. It is well known that, under suitable assumptions on the data promoting ``ergodicity'', the family of solutions $(u^\epsilon)_\epsilon$ of~\eqref{eq:HJ-eps} converges (locally) uniformly in $\R^N$ to a function $u$, and such a limit solves an \emph{effective equation}, properly defined through the averaging process in the passage to the limit. {This picture extends to nonlocal equations, see for instance~\cite{A09, S2010}, and a rate of convergence for $\| u^\epsilon - u \|_\infty$ was obtained in~\cite{RPT23} when the kernels do not depend on the slow variable $x$. In this paper, we provide such a rate when the kernels depend on all the variables, under assumptions that will be precisely presented in Section~\ref{secApplications} below.}

A key feature of~\eqref{eq:HJ-eps} is that the effective equation admits an explicit representation. As noticed in~\cite{RPT23}, the limit $u$ is expected to solve
	\begin{equation}\label{eq:HJ-bar}\tag{$\overline{HJ}$}
		u(x) + \sup_{\mu \in \mathcal A(x)} \big\{ -\overline L^\mu_x u(x) - \overline \ell^\mu(x) \big\} = 0 \quad \text{in } \R^N,
	\end{equation}
where, for each $x \in \R^N$, $\mathcal A(x)$ is a nonempty subset of the space of probability measures on the product space $\T^N \times \Theta$, and $\overline L^\mu_x, \overline \ell^\mu$ are averaged (with respect to $\mu$) fractional elliptic operators and running costs, respectively, namely
\[
	\overline L^\mu_x u(x) := \int_{\T^N\times\Theta} L^\theta_{x,y} u(x)\, d\mu(y,\theta), \qquad
	\overline \ell^\mu(x) := \int_{\T^N\times\Theta} \ell^\theta(x,y)\, d\mu(y,\theta).
\]
See~\eqref{defA} for the definition of $\mathcal A(x)$ and~\eqref{eq:mean-f-mu} for a general definition of the averaging. The measures in $\mathcal A(x)$ are referred to as \emph{viscosity Mather measures} in Ishii, Mitake and Tran~\cite{IMT17, IMT17b}, where the authors provide representation formulas for the ergodic constant associated with second-order degenerate elliptic HJ equations. For first-order PDEs linked to optimal control of deterministic dynamical systems, representation formulas for homogenization and/or singular perturbation problems are given in terms of \emph{occupational measures}, see~\cite{AB01, Terrone2011}. Representation formulas for second-order equations can also be found in~\cite{BK23}.

To obtain a rate of convergence, we follow the approach of Camilli and Marchi~\cite{CM09}, adapted to the nonlocal setting in~\cite{RPT23}, which makes the perturbed test function method quantitative by using the effective solution $u$ itself as a test function; the regularity of $u$ is therefore the key ingredient of the argument. Implementing this strategy in the present setting raises two difficulties.
First, the map $x \mapsto \mathcal A(x)$ is defined implicitly through the associated cell problem, and this state dependence prevents a direct use of the available nonlocal higher-order Schauder estimates, such as those of~\cite{CS11b, Serra15}, where the index set in the Bellman formulation does not depend on $x$. Second, we do not know whether~\eqref{eq:HJ-bar} satisfies a comparison principle for viscosity solutions, so that neither its well-posedness nor the uniqueness of the limit of $(u^\epsilon)_\epsilon$, and hence the convergence of the whole family, can be obtained by standard arguments.
These difficulties arise from the novelties of~\eqref{eq:HJ-eps} compared with previous works, as the kernels defining the nonlocal operators $L_{x,y}^\theta$ are allowed to depend on all the variables: the \emph{slow variable} $x$, the \emph{fast variable} $y = x/\epsilon$ and the \emph{control} $\theta$.

We overcome both difficulties by means of higher-order regularity estimates. We develop a $C^{\sigma+\alpha}$ regularity theory for nonlocal, uniformly elliptic equations in Bellman form, with an index set depending on the state variable $x$, of which~\eqref{eq:HJ-bar} is a particular case. This is our first main result, Theorem~\ref{thm1}, which extends the estimates of Serra~\cite{Serra15}, where the index set does not depend on $x$. The key assumption is the H\"older continuity of the set-valued map $x \mapsto \mathcal A(x)$. The proof adapts the blow-up and compactness argument of~\cite{Serra15}, based on a Liouville-type theorem: the variation of the index set creates an additional error term at every scale, and the H\"older continuity of $x \mapsto \mathcal A(x)$ ensures that it vanishes in the blow-up limit. Since we believe these results to be of independent interest, we state them in a general framework, independent of the homogenization problem.

These estimates allow us to construct a classical solution of the effective problem, unique among bounded classical solutions, without relying on a comparison principle. Combined with the approach of~\cite{CM09}, this leads to our second main result, Theorem~\ref{thm:rates-homog}, the rate of convergence
\[
	\|u^\epsilon-u\|_{L^\infty(\R^N)} \le C\epsilon^{\alpha_*}, \qquad
	\alpha_* =\frac{\sigma\alpha}{2+\alpha},
\]
for all $\epsilon \in (0,1)$. In particular, the whole family $(u^\epsilon)_\epsilon$ converges to the unique classical solution of~\eqref{eq:HJ-bar}.
To the best of our knowledge, this is the first rate of convergence in periodic homogenization of nonlinear nonlocal equations whose kernels depend jointly on the slow variable, the fast variable and the control.
When the kernels do not depend on $x$, as in~\cite{RPT23}, our rate also improves the exponent obtained there. We do not know whether $\alpha_*$ is optimal. This result relies on the H\"older continuity of the data in the slow and fast variables $(x,y)$, uniformly in the control, and on their continuity in $\theta$. We do not pursue the analysis under weaker assumptions, even for qualitative homogenization.

\subsection{Related literature.} Periodic homogenization of HJ and fully nonlinear elliptic equations goes back to Lions, Papanicolaou and Varadhan~\cite{LPV86} for first-order equations, and to Evans~\cite{E89, E92}, who introduced the perturbed test function method for second-order problems; see also Alvarez and Bardi~\cite{AB01} for connections with singular perturbations.

Rates of convergence were obtained by Capuzzo-Dolcetta and Ishii~\cite{CDI01} for first-order equations, by Camilli and Marchi~\cite{CM09} for fully nonlinear, uniformly elliptic ones, and have been sharpened in the convex first-order setting by Mitake, Tran and Yu~\cite{MTY19}.
In random media, qualitative homogenization was established by Souganidis~\cite{Souganidis99} and Rezakhanlou and Tarver~\cite{RT00} for first-order equations, by Lions and Souganidis~\cite{LS05} and Kosygina, Rezakhanlou and Varadhan~\cite{KRV06} for viscous ones, and by Caffarelli, Souganidis and Wang~\cite{CSW05} for fully nonlinear, uniformly elliptic equations. Quantitative results followed, see Caffarelli and Souganidis~\cite{CS10}, Armstrong, Cardaliaguet and Souganidis~\cite{ACS14}, Armstrong and Smart~\cite{AS14}, Armstrong and Lin~\cite{AL17}, and the monograph~\cite{AKM19} for divergence-form equations.

For nonlocal equations, periodic homogenization of nonlinear integro-differential equations was initiated by Arisawa~\cite{A09} and Schwab~\cite{S2010}, and extended to nonlocal HJ equations with gradient terms in~\cite{BCT20, CGT22}, including the critical case $\sigma = 1$; rates of convergence were obtained in~\cite{RPT23}. Stochastic homogenization of integro-differential equations was established by Schwab~\cite{Schwab13}, building on the approach of~\cite{CSW05}. Linear nonlocal operators have been homogenized by variational, probabilistic and operator-theoretic methods, and the nature of the limit depends on the tails of the kernel. For symmetric L\'evy-type operators of stable type, the limit is a nonlocal operator, see Kassmann, Piatnitski and Zhizhina~\cite{KPZ19} for periodically oscillating coefficients and Chen, Chen, Kumagai and Wang~\cite{CCKW21} for stationary ergodic media. Quantitative results for such linear operators, including operator estimates with correctors and higher-order convergence rates, have been obtained very recently, see~\cite{PSSZ26, CCKW26} and the references therein. In all the nonlocal works above, the kernel may depend on the fast variable and the control, but not on the slow variable: the dependence on $x$ enters only through lower-order terms, such as a drift or the running cost.

The regularity theory for fully nonlinear, uniformly elliptic nonlocal equations was developed by Caffarelli and Silvestre, who established the Harnack inequality and $C^{1,\alpha}$ estimates~\cite{CS09}, then perturbative regularity results for equations with variable coefficients~\cite{CS11}, and a nonlocal Evans--Krylov theorem~\cite{CS11b}.
For viscosity solutions of nonlinear integro-differential equations satisfying a weaker ellipticity condition, H\"older estimates were obtained by Barles, Chasseigne and Imbert~\cite{BCI11}, and Lipschitz estimates, for mixed local and nonlocal equations, by Barles, Chasseigne, Ciomaga and Imbert~\cite{BCCI12}.
Interior $C^{1,\alpha}$ estimates for kernels without regularity assumptions were obtained by Kriventsov~\cite{Kriventsov13}, and $C^{\sigma+\alpha}$ estimates for concave equations with rough kernels, H\"older continuous in $x$, by Serra~\cite{Serra15}. Schauder and Dini-type estimates for nonlocal fully nonlinear equations were established by Jin and Xiong~\cite{JX16} and Mou~\cite{Mou16}. We refer to the monograph of Fern\'andez-Real and Ros-Oton~\cite{FRRO24} for an overview of the theory. In all these results, whenever the operator is written in Bellman or Isaacs form, the index set does not depend on $x$.

\smallskip
\noindent\textbf{Organization of the paper.}
Section~\ref{secAssumptions} collects the notation and the standing assumptions, and states the main results: the regularity estimate of Theorem~\ref{thm1} and its version for proper equations, Corollary~\ref{cor:proper}, in an abstract setting; and, in Section~\ref{secApplications}, the definition of the sets $\mathcal A(x)$ and the rate of convergence of Theorem~\ref{thm:rates-homog}.
 Section~\ref{sec-reg} proves Theorem~\ref{thm1}.
 Section~\ref{sec-existence} constructs the classical solution of~\eqref{eq:HJ-bar}.
 Section~\ref{sec-convergence} proves Theorem~\ref{thm:rates-homog}: we first establish the H\"older regularity of $x \mapsto \mathcal A(x)$ and the properties of the approximate correctors, and then complete the quantitative perturbed test function argument. The Appendix collects elementary estimates on second differences of H\"older functions.

\section{Assumptions and main results}
\label{secAssumptions}

\subsection{Notations}
We introduce here the main notations used throughout the paper.
\smallskip

\noindent \textbf{Euclidean space.}
$\R^N$ denotes Euclidean space, $N \ge 1$. $\T^N := \R^N/\Z^N$ denotes the $N$-dimensional flat torus, identified throughout with $[0,1)^N$. $\Theta$ is a fixed nonempty set of \emph{controls}, assumed to be a compact metric space.  For $x \in \R^N$ and $r>0$, $B_r(x)$ denotes the open Euclidean ball of radius $r$ centered at $x$. We write $B_r := B_r(0)$ for the ball centered at the origin, and $ B$ for the open unit ball; $\overline{B_r(x)}$, $\overline{B_r}$ denote their closures.

\smallskip
\noindent \textbf{Characteristic functions.} For a set $A$ contained in a given ambient space, $\chi_A$ denotes its characteristic (indicator) function, $\chi_A(x) = 1$ if $x \in A$ and $\chi_A(x) = 0$ otherwise.

\smallskip
\noindent \textbf{Probability measures on topological spaces.}
Let $X$ be a topological space; in the homogenization problem, $X := \T^N \times \Theta$, with the product topology. We denote by $C_b(X)$, resp.\ $C_b(B \times X)$, the bounded continuous functions on $X$, resp.\ on $B \times X$ with the product topology, and by $\PP(X)$ the set of Radon probability measures on $X$.

For $\mu \in \PP(X)$ and a $\mu$-integrable function $\phi : X \to \R$, we write $\overline\phi^\mu := \int_X \phi\, d\mu$ and, more generally, for $f : X \times Z \to \R$ with $f(\cdot,z)$ $\mu$-integrable for every $z \in Z$,
\begin{equation}\label{eq:mean-f-mu}
	\overline f^\mu(z) := \int_X f(\xi, z)\, d\mu(\xi), \qquad z \in Z.
\end{equation}
Given a family of nonempty sets $\mathcal A(x) \subseteq \PP(X)$, we denote the set of averages of $\phi$ by
\begin{equation}\label{meanphi}
	\langle \phi \rangle_x := \{ \overline \phi^\mu : \mu \in \mathcal A (x) \} \subseteq \R.
\end{equation}

\smallskip
\noindent \textbf{The ellipticity class $\mathcal L_0$.} For $\sigma \in (0,2)$ and $0 < \Lambda_0 \le \Lambda < \infty$, we denote by $\mathcal L_0 = \mathcal L_0(\sigma,\Lambda_0,\Lambda)$ the class of \emph{symmetric, uniformly elliptic kernels of order $\sigma$}, that is, Borel measurable functions $K : \R^N\setminus\{0\} \to \R$ such that, for a.e.\ $z \neq 0$,
\begin{equation}\label{eq:L0-class}
	K(-z) = K(z), \qquad \frac{\Lambda_0}{|z|^{N+\sigma}} \le K(z) \le \frac{\Lambda}{|z|^{N+\sigma}}.
\end{equation}
For $K \in \mathcal L_0$, we write
\[
	L_K u(x) := \mathrm{P.V.} \int_{\R^N} \big(u(x+z)-u(x)\big)K(z)\,dz = \frac12 \int_{\R^N} \delta^2u(x,z)K(z)\,dz,
\]
with $\delta^2 u(x,z) := u(x+z) + u(x-z) - 2u(x).$
This is well defined if $u \in C^{\sigma+\epsilon}(B_r(x))$ for some $r, \epsilon > 0$, which controls the singularity at $z = 0$, and $u \in L^1_\sigma(\R^N)$, that is, $\int_{\R^N} |u(y)|(1+|y|)^{-N-\sigma}\, dy < \infty$, which controls the tail; see~\cite{CS09}. We refer to~\cite{BI08} for the notion of viscosity solution of nonlocal equations.

\smallskip
\noindent \textbf{Pucci extremal operators.} For the ellipticity class $\mathcal L_0 = \mathcal L_0(\sigma,\Lambda_0,\Lambda)$, we denote by $\MM^\pm_{\mathcal L_0}$ the associated Pucci operators, defined for $u:\R^N\to\R$ and $x\in\R^N$ by
\[
	\MM^+_{\mathcal L_0} u(x) := \sup_{K\in\mathcal L_0} L_K u(x), \qquad
	\MM^-_{\mathcal L_0} u(x) := \inf_{K\in\mathcal L_0} L_K u(x),
\]
so that
\[
\MM^-_{\mathcal L_0} u(x) \ \le \ L_{K} u(x) \ \le \ \MM^+_{\mathcal L_0} u(x) \qquad \text{for every } K \in \mathcal L_0.
\]

\smallskip
\noindent \textbf{H\"older spaces.} For $\alpha>0$ with $\alpha\notin\N$, write $\alpha= k+\gamma$ with
$k=\lfloor\alpha\rfloor\in\N_0 := \N \cup \{0\}$ and $\gamma\in(0,1)$. For $\Omega\subseteq\R^N$ and $u\in C^k(\Omega)$, we denote
\[
	[u]_{C^\alpha(\Omega)} := \max_{|l|=k}\ \sup_{x\ne y\in\Omega} \frac{|D^lu(x)-D^lu(y)|}{|x-y|^\gamma}, \qquad
	\|u\|_{C^\alpha(\Omega)} := \sum_{|l|\le k}\|D^lu\|_{L^\infty(\Omega)} + [u]_{C^\alpha(\Omega)}.
\]
In particular, for $\alpha\in(0,1)$ and $\Omega=\R^N$, this gives the usual global H\"older norm
$$
\|u\|_{C^\alpha(\R^N)} = \|u\|_{L^\infty(\R^N)} + \sup_{x\ne y \in \R^N} \frac{|u(x)-u(y)|}{|x-y|^\alpha}.
$$
We write $u\in C^\alpha(\Omega)$ (resp. $u \in C^\alpha_b(\Omega)$) whenever $u\in C^k(\Omega)$ and $[u]_{C^\alpha(\Omega)}<\infty$ (resp. $\|u\|_{C^\alpha(\Omega)}<\infty$). For $k\in\N_0$, $C^k(\Omega)$ and $C^k_b(\Omega)$ denote the classical spaces of $k$ times continuously differentiable functions (resp. with bounded derivatives up to order $k$). For $k\in\N_0$, we also write $[u]_{C^k(\Omega)} := \max_{|l|=k}\|D^lu\|_{L^\infty(\Omega)}$. Finally, if $\Omega$ is either $\R^N$ or a bounded convex domain, then, by interpolation, for all $u \in C^\alpha_b(\Omega)$,
\[
\|u\|_{C^\alpha(\Omega)} \le C\big([u]_{C^\alpha(\Omega)} + \|u\|_{L^\infty(\Omega)}\big).
\]

\subsection{Regularity estimates for effective problems.}
We develop, independently of any homogenization procedure, a self-contained interior regularity theory for a class of fully nonlinear, nonlocal Bellman equations in which the supremum is taken over point-dependent sets of probability measures $\A(x)\subseteq\PP(X)$:
\begin{equation}\label{eq:HJ-eff-sup}
	\sup_{\mu \in \mathcal A(x)} \big\{ -\overline L^\mu_{x} u(x) - \overline \ell^\mu(x) \big\} = 0 \quad \text{in } B,
\end{equation}
where $\overline L^\mu_x$ and $\overline\ell^\mu$ are as in~\eqref{eq:HJ-bar} and~\eqref{eq:mean-f-mu}.

The main result of this subsection, Theorem~\ref{thm1} below, gives interior $C^{\sigma+\alpha}$ estimates for viscosity solutions of~\eqref{eq:HJ-eff-sup}, under structural assumptions general enough to cover operators obtained by averaging the diffusion and the running cost over an arbitrary index set $X$.

The assumptions come in three groups, each attached to one of the objects appearing in~\eqref{eq:HJ-eff-sup}: the kernel $K$ defining $\overline L^\mu_x$, the family of measure sets $\{\mathcal A(x)\}_{x \in B}$, and the running cost $\ell$.

\subsection*{Assumptions}
Let $X$ be a topological space, and let $E \subseteq C(X)$ be a linear subspace endowed with a norm $\| \cdot \|_*$ such that $(E, \| \cdot \|_*)$ is a Banach space and $\chi_X \in E$. We consider a Borel measurable function
\[
K: B \times (\R^N \setminus \{ 0 \}) \times X \to \R,
\]
and a running cost $\ell \in C_b(B \times X)$.

We first collect the assumptions on the kernel $K$.

\begin{enumerate}[start=0,label=\textup{(K\arabic*)},ref=\textup{(K\arabic*)}]

\item \label{K0} \emph{Ellipticity.} For all $x \in B$ and $\xi \in X$,
	$K(x, \cdot, \xi) \in \mathcal L_0(\sigma, \Lambda_0, \Lambda)$.

\item \label{K1} \emph{H\"older regularity of the averaged kernel.}
	There exist a constant $K_0 > 0$ and $\alpha\in (0,1)$ such that, for all $\mu \in \PP(X)$, $x, x' \in B$ and $r > 0$,
	\[
	\int_{B_{2r} \setminus B_r} |\overline K^\mu (x, z) - \overline K^\mu (x', z)|\, dz
	\leq K_0 \, \frac{|x - x'|^\alpha}{r^\sigma}.
	\]

\item \label{K2} \emph{Target space $E$ and compatibility.} There exists a constant $K^*>0$ such that, for all $x \in B$ and $z\in\R^N\setminus \{0\}$, we have $K(x, z, \cdot) \in E$ and
	\[
	\|K(x,z, \cdot)\|_* \leq K^* |z|^{-(N+ \sigma)}.
	\]
	Moreover, for all $x\in B$ and all $u \in L^1_\sigma(\R^N)\cap C^{\sigma +\epsilon}(B_r(x))$,
	for some $r>0$ and $\epsilon > 0$, we have
		\begin{equation*}
		    \zeta := \frac12 \int_{\R^N} \delta^2 u(x,z) \, K(x, z, \cdot) \, dz \in E
		\end{equation*}
	and
		\[
		\| \zeta \|_* \leq \frac12 \int_{\R^N} |\delta^2 u(x,z)| \, \| K(x, z, \cdot)\|_* \, dz.
		\]
\end{enumerate}

Assumption~\ref{K2} holds, for instance, whenever the $E$-valued map $z \mapsto \min\{1, |z|^{\sigma+\epsilon}\}\, K(x,z,\cdot)$ is Bochner integrable on $\R^N$ for every $x \in B$; see Yosida~\cite[Section V.5]{Yosida95}. This is the only place where we use that $E$ is a Banach space, rather than merely a normed vector space. The role of $E$ in the applications will become clear in Subsection~\ref{secApplications}, see~\eqref{defE}.

\smallskip
Next, we turn to the family of measure sets $\{\mathcal A(x)\}_{x \in B}$. Rather than imposing conditions on the individual measures, we require the map $x \mapsto \mathcal A(x)$ itself to be H\"older continuous. We measure its variation through the Hausdorff distance between the sets $\langle \phi \rangle_x$ obtained by averaging a test function $\phi \in E$ against the measures in $\mathcal A(x)$, uniformly in $\phi$.

\begin{enumerate}[start=1,label=\textup{(A\arabic*)},ref=\textup{(A\arabic*)}]
\item \label{A1} \emph{H\"older regularity of $x \mapsto \A(x)$.}
	There exist a constant $A_0 \geq 0$ and $\alpha \in (0,1)$ such that,
	for all $x, x' \in B$ and $\phi \in E$,
	\[
	d_H(\langle \phi \rangle_x, \langle \phi \rangle_{x'}) \leq A_0 \| \phi \|_* |x - x'|^\alpha,
	\]
	where $\langle \phi \rangle_x$ is defined in~\eqref{meanphi},
	and $d_H$ denotes the Hausdorff distance between nonempty bounded subsets of $\R$:
	\( d_H(S,T) = \max \big\{ \sup_{s \in S} \mathrm{dist}(s, T), \ \sup_{t \in T} \mathrm{dist}(t, S) \big\}.\)
\end{enumerate}

\smallskip
Finally, we state the assumption on the running cost $\ell$.
\begin{enumerate}[start=1,label=\textup{(L\arabic*)},ref=\textup{(L\arabic*)}]
\item \label{L1} \emph{Regularity of the cost.}
	For all $x \in B$, $\ell(x, \cdot) \in E$, and there exists a constant $C_0 > 0$ such that
	\[
	\sup_{x\in B} \| \ell(x, \cdot) \|_* + \sup_{\mu \in \mathcal A} \big[ \overline \ell^\mu \big]_{C^\alpha(B)} \leq C_0,
	\qquad \text{where } \mathcal A := \bigcup_{x \in B} \mathcal A(x).
	\]
\end{enumerate}

With assumptions~\ref{K0}--\ref{K2}, \ref{A1} and \ref{L1} in place, we can now state our first main result. Since $\sup_{\mu}\{-a_\mu\} = -\inf_{\mu} a_\mu$, equation~\eqref{eq:HJ-eff-sup} can be equivalently written in $\inf$-form, which is the one we use from now on, following~\cite{Serra15}.
\begin{thm}[$C^{\sigma+\alpha}$ estimate]\label{thm1}
Let $\sigma\in(0,2)$. There exists $\bar\alpha\in(0,1)$ such that the following holds.
For all $0 < \alpha < \bar \alpha$ such that $\sigma + \alpha\not\in\N$,
there exists a constant $C >0$ depending on $N, \sigma, \Lambda_0, \Lambda, K_0, K^*, A_0$ and $\alpha$ such that,
if $u \in C^{\sigma+\alpha}(B)\cap C^\alpha_b(\R^N)$ is a viscosity solution of
\begin{equation}\label{eq:effHJ-x}
	 \inf_{\mu \in \A(x)} \Big\{ \overline L_x^\mu u(x) + \overline \ell^\mu(x) \Big\} = 0
	\quad \text{in } B,
\end{equation}
where the family of kernels $\big(K(x,\cdot,\xi)\big)_{x \in B, \xi \in X}$ satisfies \ref{K0}--\ref{K2},
the family of measure sets $(\A(x))_{x \in B}$ satisfies \ref{A1}, and the running cost $\ell$ satisfies \ref{L1},
with this exponent $\alpha$, then
\[
	\| u\|_{C^{\sigma + \alpha}(B_{1/2})} \leq C
	\Big( \| u \|_{C^\alpha(\R^N)} + C_0 \Big),
\]
where $C_0>0$ is the constant in~\ref{L1}.
\end{thm}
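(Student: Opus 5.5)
We follow the blow-up and compactness scheme of Serra~\cite{Serra15}, treating the $x$-dependence of the index set as one more perturbative error that the Hölder assumption \ref{A1} makes vanish under rescaling.

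\emph{Step 1: a priori bound with interpolation.} Since $u\in C^{\sigma+\alpha}(B)$ is assumed, it suffices to prove, for every $\delta>0$,
\[
[u]_{C^{\sigma+\alpha}(B_{1/2})}\le \delta\,[u]_{C^{\sigma+\alpha}(B)} + C_\delta\big(\|u\|_{C^\alpha(\R^N)}+C_0\big).
\]
We would state this in a weighted (interior) seminorm, so that a standard covering and absorption argument yields the theorem. We argue by contradiction. Suppose there are sequences $u_k$, $K_k$, $\A_k$, $\ell_k$ satisfying the hypotheses with uniform constants, with $\|u_k\|_{C^\alpha(\R^N)}+C_0^k\le 1$ and weighted seminorm $\to\infty$. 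As in~\cite{Serra15}, we choose points $x_k$ and radii $r_k\to0$ that nearly realize the blow-up of $[u_k]_{C^{\sigma+\alpha}}$. We then define
\[
v_k(x)=\frac{u_k(x_k+r_kx)-p_k(x)}{r_k^{\sigma+\alpha}\,\Theta_k},
\]
where $p_k$ is the Taylor polynomial of degree $\lfloor\sigma+\alpha\rfloor$ fitting best in $L^2(B_1)$, and $\Theta_k$ is the normalized seminorm. Then $v_k$ has $C^{\sigma+\alpha}$ seminorm bounded on balls $B_R$ with growth $R^{\sigma+\alpha}$, is orthogonal to polynomials on $B_1$, and is nondegenerate: $\|v_k\|_{L^2(B_1)}\gtrsim 1$ or a similar quantity. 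By Arzelà--Ascoli, $v_k\to v$ in $C^{\sigma+\alpha-}_{loc}$, with $|v(x)|\le C(1+|x|)^{\sigma+\alpha}$. For the tails to be finite, we need $\sigma+\alpha<2$ or $\alpha$ small; this is where $\bar\alpha$ enters.

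\emph{Step 2: the equation for the limit.} We show that $v$ solves, in the viscosity sense and on incremental quotients, an equation of the form $\inf_{\mu\in\A_\infty}\overline L^\mu_{\infty}v=\text{const}$ with frozen kernels, i.e.\ a translation-invariant concave equation in the class $\mathcal L_0$. Concretely, we show that the second differences $v(\cdot+h)-v$ satisfy $\MM^+ w\ge0\ge\MM^- w$ in $\R^N$ (in the sense of~\cite{Serra15}). Fixing a center $x_k$, we compare the operator at $x=x_k+r_kx'$ with the frozen one at $x_k$. Three error terms arise, each to be shown $O(r_k^{\alpha})/\Theta_k\to 0$ after rescaling by $r_k^{\sigma+\alpha}\Theta_k$:
\begin{itemize}
\item[(i)] the kernel variation $\overline K^\mu(x,\cdot)-\overline K^\mu(x_k,\cdot)$, which we control via \ref{K1} on dyadic annuli, using the $C^\alpha$ bound on $u$ for the far part and the $C^{\sigma+\alpha}$ bound for the near part, exactly as in~\cite{Serra15};
\item[(ii)] the cost variation, controlled by \ref{L1};
\item[(iii)] the new term, the change of index set from $\A(x)$ to $\A(x_k)$.
\end{itemize}
For (iii) we take $\phi=\zeta=\frac12\int\delta^2u(x,z)K(x,z,\cdot)dz+\ell(x,\cdot)\in E$, which lies in $E$ by \ref{K2} and \ref{L1}. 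Its $\|\cdot\|_*$ norm is bounded by $K^*\int|\delta^2u(x,z)||z|^{-N-\sigma}dz+C_0$. By \ref{A1}, $d_H(\langle\zeta\rangle_x,\langle\zeta\rangle_{x_k})\le A_0\|\zeta\|_*|x-x_k|^\alpha$. Since the infimum over a set is $1$-Lipschitz in the Hausdorff distance, we get
\[
\Big|\inf_{\A(x)}\overline\zeta^\mu-\inf_{\A(x_k)}\overline\zeta^\mu\Big|\le A_0\|\zeta\|_*\,|x-x_k|^\alpha .
\]

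\emph{Main obstacle.} The difficulty is that $\|\zeta\|_*$ involves the full Pucci-type integral of $|\delta^2u|$, which is only bounded by $C(1+[u]_{C^{\sigma+\alpha}})$ with a loss. After rescaling, the error therefore looks like $r_k^{\alpha}\,(1+\Theta_k r_k^{\alpha}\cdot\text{stuff})$, and we must check that it still tends to zero. This is where smallness of $\alpha<\bar\alpha$ and the interior weighting are used. We would first try to split the $z$-integral at $|z|=r_k$, using the $C^{\sigma+\alpha}$ seminorm inside and the $C^\alpha$ norm outside, to get $\|\zeta\|_*\lesssim \Theta_k+\text{const}$ locally. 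The factor $|x-x_k|^\alpha=r_k^\alpha|x'|^\alpha$ then makes the error, divided by $r_k^{\sigma+\alpha}\Theta_k$, of the same size as the kernel-variation error, which vanishes as in~\cite{Serra15}. One delicate point is that the viscosity solution is not $C^2$. This is handled because we assume $u\in C^{\sigma+\alpha}(B)$, so the operator is classically defined pointwise and the comparison of infima is done pointwise before passing to limits.

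\emph{Step 3: Liouville and contradiction.} The limit $v$ solves a translation-invariant concave equation in $\mathcal L_0$ with growth $|x|^{\sigma+\alpha}$, $\sigma+\alpha<\sigma+\bar\alpha$. By the nonlocal Evans--Krylov/Liouville theorem of~\cite{CS11b,Serra15}, valid for $\bar\alpha$ below the Evans--Krylov exponent, $v$ is a polynomial of degree $\le\lfloor\sigma+\alpha\rfloor$. Orthogonality to such polynomials forces $v=0$, contradicting nondegeneracy.
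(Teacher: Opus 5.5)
The ingredients are the right ones: a Serra-type blow-up plus the Liouville theorem, with the change of index set handled by applying \ref{A1} to $\zeta\in E$ and using that the infimum is $1$-Lipschitz in $d_H$. Fixing $A_0$ is also harmless, since $C$ may depend on it. The gap is that the step you call the main obstacle is not resolved, and as written the limit $v$ does not satisfy hypotheses (ii)--(iii) of Theorem~\ref{thm:Liouville}.

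The operator has order $\sigma$, so $L_{\overline K}u_k(x_k+r_kx)=r_k^{\alpha}\Theta_k\,L_{\widetilde K}v_k(x)$ plus a polynomial contribution, where $\widetilde K(z)=r_k^{N+\sigma}\overline K(r_kz)\in\mathcal L_0$. Errors must therefore be $o(r_k^\alpha\Theta_k)$; dividing by $r_k^{\sigma+\alpha}\Theta_k$, as you do, is the wrong normalization. The index-set error $A_0\|\zeta\|_*\,r_k^\alpha|x'|^\alpha$ vanishes only if $\|\zeta\|_*=o(\Theta_k)$. A bound $\|\zeta\|_*\lesssim\Theta_k+\mathrm{const}$ leaves an error of size about $A_0|x'|^\alpha$ in the Pucci inequalities for $v$, which is not enough.

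Your split at $|z|=r_k$ does not even give that bound. Outside $B_{r_k}$ you only use $\|u_k\|_{C^\alpha(\R^N)}\le1$, which yields a contribution of order $r_k^{\alpha-\sigma}$ (for $\alpha<\sigma$). Nothing in your normalization forces $\Theta_k r_k^{\sigma-\alpha}\to\infty$.

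The same objection applies to your item (i). Estimating the near part with the $C^{\sigma+\alpha}$ seminorm, which is exactly the quantity that blows up, produces a term of the same order as the main one. Neither Serra nor the paper makes the kernel-variation error vanish in the blow-up. It appears as $K_0[u]_{C^{\sigma+\alpha}(B)}$ in Proposition~\ref{prop:x-dep-estimate}, and is absorbed only after rescaling, which turns $K_0$ into $K_0\rho^\alpha$.

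The missing idea is to control $\|\zeta\|_*$ by a strictly lower-order norm that stays bounded while the blow-up quantity diverges. The paper first proves Proposition~\ref{prop:apriori-estimate} for $x$-independent kernels and $u\in C^{\sigma+\alpha'}_b(\R^N)$ with $\alpha'<\alpha$. It normalizes $(1+A_{0,k})(\|u^k\|_{C^{\sigma+\alpha'}(\R^N)}+C_{0,k})=1$ and blows up using Serra's $\rho(r)$, which is built from $C^{\sigma+\alpha'}$ seminorms. Then $\|\phi_{k_m}\|_*\le C(\|u^{k_m}\|_{C^{\sigma+\alpha'}(\R^N)}+C_{0,k_m})\le C/(1+A_{0,k_m})$, hence $|E_\A|/(r_m^\alpha\rho(r_m))\le C|h|^\alpha/\rho(r_m)\to0$.

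The remaining issues are handled only afterwards: the $x$-dependence of the kernels by freezing, the lack of global regularity by a cut-off, and the auxiliary $C^{\sigma+\alpha'}$ and $K_0[u]_{C^{\sigma+\alpha}(B)}$ terms by rescaling and interpolation. Rescaling turns $K_0,A_0$ into $K_0\rho^\alpha,A_0\rho^\alpha$, and the interpolation is in the adimensional seminorms of Remark~\ref{rmk:adim}.

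A single weighted blow-up is not hopeless. It would, however, require proving $\|\zeta\|_*=o(\Theta_k)$, for instance by splitting at the distance to $\partial B$, using the $C^{\sigma+\alpha'}$ seminorm inside, and interpolating. That argument is absent from your proposal.

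Two minor points. First, $\bar\alpha$ is the exponent in Serra's Liouville theorem and is unrelated to tails: the increments $v(\cdot+h)-v$ have integrable tails because $\alpha<1$. Second, for (iii) the near-optimal measure must be chosen independently of $h$, which the paper achieves by selecting it at $\bar x$ only.
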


The proof of this theorem follows the lines of~\cite{Serra15}, combining a blow-up analysis with a Liouville-type theorem. When $\mathcal A(x)$ does not depend on $x$, assumption~\ref{A1} holds with $A_0 = 0$, and the result reduces to~\cite[Theorem 1.1]{Serra15}, with $E = C_b(X)$ and $\|\cdot\|_* = \|\cdot\|_\infty$.

The same estimate holds for equations with a zeroth-order term, such as~\eqref{eq:HJ-bar}.
\begin{cor}[$C^{\sigma+\alpha}$ estimate for proper equations]\label{cor:proper}
Under the assumptions of Theorem~\ref{thm1}, the same estimate holds, with a constant depending in addition on $\|\chi_X\|_*$, if $u$ is instead a viscosity solution of
\begin{equation}\label{eq:properHJ-x}
	u(x) - \inf_{\mu \in \A(x)} \Big\{ \overline L_x^\mu u(x) + \overline \ell^\mu(x) \Big\} = 0
	\quad \text{in } B.
\end{equation}
\end{cor}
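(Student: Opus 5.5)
The plan is to reduce \eqref{eq:properHJ-x} to an equation of the form \eqref{eq:effHJ-x}, which Theorem~\ref{thm1} covers. We absorb the zeroth-order term into the running cost. Since every $\mu\in\A(x)$ is a probability measure, $u(x)=\int_X u(x)\chi_X\,d\mu$. Hence \eqref{eq:properHJ-x} can be rewritten as
\[
\inf_{\mu\in\A(x)}\Big\{\overline L^\mu_x u(x)+\overline{\tilde\ell}^\mu(x)\Big\}=0 \quad\text{in } B,
\qquad \tilde\ell(x,\xi):=\ell(x,\xi)-u(x)\chi_X(\xi).
\]
This identity holds at the level of test functions too, since the added term depends only on the value at $x$. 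Viscosity solutions of \eqref{eq:properHJ-x} are therefore exactly viscosity solutions of the rewritten equation, because $u$ is continuous and the test-function substitution just replaces $u(x)$ by $\varphi(x)=u(x)$ at the touching point.

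The next step is to check \ref{L1} for $\tilde\ell$ with a new constant $\tilde C_0$. Membership $\tilde\ell(x,\cdot)\in E$ holds because $E$ is a linear space containing $\chi_X$. Moreover
\[
\|\tilde\ell(x,\cdot)\|_*\le \|\ell(x,\cdot)\|_*+\|u\|_{L^\infty}\|\chi_X\|_*,
\]
which is where the dependence on $\|\chi_X\|_*$ enters. For the Hölder part, $\overline{\tilde\ell}^\mu=\overline\ell^\mu-u$, so
\[
[\overline{\tilde\ell}^\mu]_{C^\alpha(B)}\le C_0+[u]_{C^\alpha(\R^N)}.
\]
Thus \ref{L1} holds with $\tilde C_0=C_0+(1+\|\chi_X\|_*)\|u\|_{C^\alpha(\R^N)}$. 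Assumptions \ref{K0}--\ref{K2} and \ref{A1} are untouched, since the kernels and the sets $\A(x)$ are unchanged.

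Applying Theorem~\ref{thm1} gives
\[
\|u\|_{C^{\sigma+\alpha}(B_{1/2})}\le C\big(\|u\|_{C^\alpha(\R^N)}+\tilde C_0\big)\le C'\big(\|u\|_{C^\alpha(\R^N)}+C_0\big),
\]
where $C'$ depends additionally on $\|\chi_X\|_*$, as claimed.

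The only delicate point is the viscosity-solution equivalence under the rewriting. The argument relies on the zeroth-order term being evaluated at the contact point, where the test function agrees with $u$. I expect this to be routine, given the definition of viscosity solutions in \cite{BI08}.
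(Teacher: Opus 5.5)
Your proposal is correct and follows the paper's proof. You absorb the zeroth-order term into the running cost via $\tilde\ell(x,\xi)=\ell(x,\xi)-u(x)\chi_X(\xi)$, verify \ref{L1} with $\tilde C_0=C_0+(1+\|\chi_X\|_*)\|u\|_{C^\alpha(\R^N)}$, and apply Theorem~\ref{thm1}, exactly as the paper does; the viscosity-solution equivalence you flag is indeed harmless, since the added term is just the value $u(x)$ at the contact point.
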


We conclude this subsection with a remark on first-order terms. We expect the methods developed here to extend to equations with drift terms when $\sigma > 1$. The critical case $\sigma = 1$ appears substantially more delicate, since diffusion and drift then act at the same scale, and the argument would require a Liouville-type theorem for the resulting operator, which, to our knowledge, is not currently available.

\subsection{Application to rates of convergence in periodic homogenization.}\label{secApplications}

We now apply the abstract theory of the previous subsection to its motivating example: the periodic homogenization of the nonlocal {HJ} equation~\eqref{eq:HJ-eps}. More precisely, we quantify the rate at which its solutions $u^\epsilon$ converge, as $\epsilon \to 0$, to the solution $u$ of the effective problem~\eqref{eq:HJ-bar}.

\smallskip

 In Theorem~\ref{thm:rates-homog} and more specifically in what respects~\eqref{eq:HJ-eps}, we will always consider
 $$
 X := \T^N \times \Theta,
 $$
 and the kernel $K$ depends on the fast variable $y = x/\epsilon \in \T^N$, in addition to the slow variable $x$ and the control $\theta \in \Theta$. The main result of this subsection, Theorem~\ref{thm:rates-homog} below, asserts that, under H\"older regularity of $K$ and $\ell$ in $(x,y)$ and mere continuity in $\theta$, the family $\{u^\epsilon\}_\epsilon$ converges uniformly to $u$ at an algebraic rate in $\epsilon$.

\subsection*{Assumptions}
We consider a Borel measurable kernel and a running cost
\begin{equation}\label{defK}
\begin{aligned}
	& { K: \R^N \times \T^N \times (\R^N \setminus \{ 0 \}) \times \Theta \to \R, \quad (x,y,z,\theta) \mapsto K^\theta(x,y,z)}, \\
	& \ell: \R^N \times \T^N \times \Theta \to \R, \quad (x,y,\theta) \mapsto \ell^\theta(x,y).
\end{aligned}
\end{equation}
In the notation of the previous subsection, $K(x,z,\xi) := K^\theta(x,y,z)$ for $\xi = (y,\theta) \in X$. We assume that $K$ satisfies the following:

\begin{enumerate}[label=\textup{(H0)},ref=\textup{(H0)}]
\item \label{H0} \emph{Ellipticity.} For all $x \in \R^N$, $y\in \T^N$ and $\theta \in \Theta$,
	$K^\theta(x, y, \cdot) \in \mathcal L_0(\sigma, \Lambda_0, \Lambda)$.
\end{enumerate}

\begin{enumerate}[label=\textup{(H1)},ref=\textup{(H1)}]
\item \label{H1} \emph{Continuity in $\theta$.}
	For all $x \in \R^N$, $z \in \R^N \setminus \{0\}$ and $y \in \T^N$, the map $\theta \mapsto { K^\theta(x,y,z)}$ is continuous.
\end{enumerate}

\begin{enumerate}[label=\textup{(H2)},ref=\textup{(H2)}]
\item \label{H2} \emph{H\"older regularity in $(x,y)$.}
	There exist a constant $K_0 > 0$ and an exponent $\alpha \in (0,1)$ such that,
	for all $z \in \R^N\setminus\{0\}$ and $\theta \in \Theta$,
	\[
	\big[ { K^\theta(\cdot, \cdot, z)} \big]_{C^\alpha(\R^N \times \T^N)} \leq K_0 \, |z|^{-(N+\sigma)}.
	\]
\end{enumerate}

\begin{enumerate}[label=\textup{(H3)},ref=\textup{(H3)}]
\item \label{H3} \emph{Homogeneity.}
	For all $x \in \R^N$, $z \in \R^N \setminus \{0\}$, $y \in \T^N$, $\theta \in \Theta$ and $t>0$,
	\[
	{ K^\theta(x, y, t z) \ = \ t^{-(N+\sigma)} K^\theta(x, y, z)}.
	\]
\end{enumerate}

\noindent We assume the running cost satisfies the following assumptions, with the same exponent $\alpha$ as in~\ref{H2}.

\begin{enumerate}[label=\textup{(H4)},ref=\textup{(H4)}]
\item \label{H4} \emph{Continuity in $\theta$.}
	For all $x \in \R^N$ and $y \in \T^N$, the map $\theta \mapsto \ell^\theta(x,y)$ is continuous.
\end{enumerate}

\begin{enumerate}[label=\textup{(H5)},ref=\textup{(H5)}]
\item \label{H5} \emph{H\"older regularity in $(x,y)$.}
	There exists a constant $C_0 > 0$ such that, for all $\theta \in \Theta$,
	\[
	\| \ell^\theta \|_{C^\alpha(\R^N \times \T^N)} \leq C_0.
	\]
\end{enumerate}

To fit the abstract framework of the previous subsection, we take $E$ to be the space of continuous functions on $X$ that are $\alpha$-H\"older continuous in $y$, uniformly in $\theta$:
\begin{equation}\label{defE}
	E := \Big\{ \phi \in C(X) : \ \| \phi \|_* := \sup_{\theta \in \Theta} \| \phi(\cdot,\theta) \|_{C^\alpha(\T^N)} <\infty \Big\}.
\end{equation}
Then $(E,\|\cdot\|_*)$ is a Banach space, with $\chi_X \in E$ and $\|\chi_X\|_* = 1$. Moreover,~\ref{H0}-\ref{H2} imply~\ref{K0}-\ref{K2}, and~\ref{H4} and~\ref{H5} imply~\ref{L1}: it suffices to average the pointwise bounds against $\mu \in \PP(X)$, the continuity on $X$ of $K(x,z,\cdot)$ and $\ell(x,\cdot)$ following from continuity in $\theta$ and H\"older continuity in $y$, uniformly in $\theta$.

\smallskip
\noindent\textbf{The effective problem and the sets $\A(x)$.}
For $x \in \R^N$, let $\mathcal G(x)$ be the set of data for which the cell problem frozen at $x$ admits a subsolution,
\begin{equation}\label{defG}
	\mathcal G(x) := \Big\{ \phi \in C(X) : \ \sup_{\theta \in \Theta} \big\{ -L^\theta_{x,y} w(y) - \phi^\theta(y) \big\} \le 0 \ \text{ in } \T^N \ \text{for some } w \in C(\T^N) \Big\},
\end{equation}
where $L^\theta_{x,y}$ acts on the periodic variable $y$, with $x$ frozen, and the inequality holds in the viscosity sense. Since the left-hand side is convex and positively homogeneous in $(w,\phi)$, $\mathcal G(x)$ is a convex cone, and we define $\A(x)$ as the set of probability measures in its polar cone,
\begin{equation}\label{defA}
	\A(x) := \Big\{ \mu \in \PP(X) : \ \int_X \phi \, d\mu \ge 0 \ \text{ for all } \phi \in \mathcal G(x) \Big\}.
\end{equation}

Then, the effective Hamiltonian associated to problem~\eqref{eq:HJ-eps} takes the form~\eqref{eq:HJ-bar}, where, for each $\mu \in \mathcal A(x)$ as in~\eqref{defA}, $\overline K^\mu$, $\overline \ell^\mu$ the averages of $K$ and $\ell$ against $\mu$ as in~\eqref{eq:mean-f-mu}, with $\xi = (y,\theta) \in \T^N \times \Theta$.

We will show in Lemma~\ref{lem:A-reg} that~\ref{A1} follows from~\ref{H0}-\ref{H2}.

We are now ready to state our rate of convergence result, which in particular yields the homogenization of~\eqref{eq:HJ-eps}.
\begin{thm}[Rate of convergence]\label{thm:rates-homog}
Let $\sigma \in (0,2)$, let $\bar\alpha \in (0,1)$ be as in Theorem~\ref{thm1}, and let $\alpha \in (0,\bar\alpha)$ with $\sigma+\alpha\notin\N$. Assume that $K$ satisfies ~\ref{H0}-\ref{H3}, and that $\ell$ satisfies~\ref{H4} and~\ref{H5}. Then~\eqref{eq:HJ-bar} admits a unique classical solution $u \in C^{\sigma+\alpha}_b(\R^N)$, and there exists $C > 0$, depending only on $N$, $\sigma$, $\Lambda_0$, $\Lambda$, $K_0$, $C_0$ and $\alpha$, such that, for all $\epsilon \in (0,1)$,
\[
	\| u^\epsilon - u \|_{L^\infty(\R^N)} \le C \, \epsilon^{\alpha_*},
	\qquad \alpha_* := \frac{\sigma\alpha}{2+\alpha} \in (0,\alpha),
\]
where $u^\epsilon$ is the unique solution of~\eqref{eq:HJ-eps}.
\end{thm}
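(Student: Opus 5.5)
The proof will have three parts: regularity of the sets $\A(x)$, construction of a unique classical effective solution, and a quantitative perturbed test function argument in the spirit of \cite{CM09,RPT23}.

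\textbf{Step 1: regularity of $x\mapsto\A(x)$.}
First I will check that \ref{A1} holds for the sets~\eqref{defA}; this is the content of Lemma~\ref{lem:A-reg}. The starting point is a duality representation: for $\phi\in E$,
\[
\sup\langle -\phi\rangle_x=\lambda_x(\phi),
\]
where $\lambda_x(\phi)$ is the ergodic constant of the cell problem frozen at $x$ with running cost $\phi$. It is obtained from the discounted cell problem
\[
\delta w_\delta+\sup_\theta\{-L^\theta_{x,y}w_\delta-\phi^\theta\}=0, \qquad \lambda_x(\phi)=\lim_{\delta\to0}\delta w_\delta .
\]
I will compare $\lambda_x(\phi)$ with $\lambda_{x'}(\phi)$ using the $C^{\sigma+\beta}$ regularity of the periodic correctors (Serra's estimate with a fixed control set). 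By \ref{H2}, the frozen operators then differ by at most $C\|\phi\|_*|x-x'|^\alpha$. Upper and lower endpoints of a compact interval control the Hausdorff distance, so applying this to both $\phi$ and $-\phi$ gives \ref{A1}.

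\textbf{Step 2: a classical solution of \eqref{eq:HJ-bar}.}
I will regularize, for instance by taking the Bellman equation over a finite-dimensional or $\rho$-perturbed approximation of the family $\A$, or by solving \eqref{eq:HJ-bar} via Perron's method on a mollified Hamiltonian. Corollary~\ref{cor:proper}, applied on unit balls centered at every point, gives uniform $C^{\sigma+\alpha}_b$ bounds, and Arzel\`a--Ascoli then yields a classical solution. For uniqueness among bounded classical solutions, take two solutions $u,v$ and look at a near-maximum point of $u-v$ (with a small localization). The difference of the infima is bounded by the supremum over $\mu\in\A(x)$ of $\overline L^\mu_x(u-v)(x)$, which is $\le 0$ at the maximum point. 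This gives $u-v\le$ a vanishing error, so $u\le v$, and symmetrically $v\le u$.

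\textbf{Step 3: the rate of convergence.}
I will follow \cite{CM09}. Let $w_\delta(x,\cdot)$ be the discounted correctors at frozen $x$ with cost
\[
\phi=K(x,\cdot,\cdot)\ast\text{(second differences of }u\text{)}+\ell(x,\cdot),
\]
and define
\[
u^{\epsilon}_\pm=u\pm\epsilon^\sigma w_\delta(x,x/\epsilon)\pm C\delta^{\gamma}\ldots
\]
The scaling $\epsilon^\sigma$ matches the homogeneity \ref{H3}. The sup-convolution and doubling of variables separating the slow and fast dependence produce several error terms:
\begin{itemize}
\item the corrector error $\|\delta w_\delta+\lambda\|_\infty\lesssim\delta^{\alpha/(\sigma+\ldots)}$, obtained from the Hölder continuity of the costs;
\item the dependence of $w_\delta$ on $x$, of size $\delta^{-1}|x-x'|^\alpha$;
\item the truncation of the nonlocal operator at scale $r$, of size $\|u\|_{C^{\sigma+\alpha}}r^\alpha$ for the near part and $r^{-\sigma}\epsilon^\ldots$ for the far part.
\end{itemize}
Comparison for \eqref{eq:HJ-eps}, which holds since it is a standard proper equation, then gives
\[
\|u^\epsilon-u\|_\infty\le C\bigl(\epsilon^\sigma\delta^{-1}+\delta^{a}+\ldots\bigr),
\]
and optimizing in $\delta$ and the doubling parameter should yield $\epsilon^{\sigma\alpha/(2+\alpha)}$.

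\textbf{Main obstacle.}
I expect the hardest step to be Step 1, the Hölder dependence of $\A(x)$ in the Hausdorff sense uniformly over $\phi\in E$, because $\A(x)$ is defined only implicitly through a polar cone. The second hardest is the careful bookkeeping of exponents in Step 3. For Step 1 I would first attempt the ergodic-constant comparison, using explicit translated test functions built from correctors that are regular in $y$.
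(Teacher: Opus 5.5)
Your Step~1 is essentially the paper's proof of Lemma~\ref{lem:A-reg}, and your uniqueness argument is the paper's. The existence half of Step~2, however, does not go through as stated. Corollary~\ref{cor:proper} is an \emph{a priori} estimate, so you need approximating problems with three properties. They must be solvable, their solutions must already be known to lie in $C^{\sigma+\alpha}_b(\R^N)$, and they must satisfy \ref{K0}--\ref{K2}, \ref{A1}, \ref{L1} uniformly in the approximation parameter. Perron's method requires a comparison principle, which is exactly what is missing for \eqref{eq:HJ-bar}. Neither mollifying the Hamiltonian nor discretizing $\A$ by itself provides comparison or qualitative $C^{\sigma+\alpha}$ regularity, because the singular part of the kernels still depends on $x$ only in a H\"older way.

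The paper's device is truncation. On $B_{\delta/2}$ every kernel is replaced by the fixed kernel $C_\Lambda|z|^{-(N+\sigma)}$, so the $x$-dependence of the kernels and of $\A(x)$ enters only through bounded zeroth-order operators. Comparison then follows by doubling variables using \ref{K1'}, \ref{A1'}, \ref{L1'}, and Perron gives $u^\delta$. Writing the equation as $\mathcal I u^\delta=f_\delta$ with $f_\delta$ H\"older lets you bootstrap to $C^{\sigma+\alpha}_b$. Since the truncated kernels satisfy \ref{K0}--\ref{K2} uniformly in $\delta$, Corollary~\ref{cor:proper} then gives $\delta$-independent bounds.

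The more serious gap is in Step~3 (below, your discount parameter $\delta$ is the paper's $\lambda$). The function $u\pm\epsilon^\sigma w_\lambda(x/\epsilon;x,u)$ is only $\alpha$-H\"older in the slow variable, with constant of order $\epsilon^\sigma\lambda^{-1}$ by Proposition~\ref{prop:approx-cell}(iv). So it is neither a classical sub- or supersolution of \eqref{eq:HJ-eps} nor an admissible test function. You name ``sup-convolution and doubling'' but never turn it into an estimate, and your error list, with ellipses in the exponents, does not produce $\alpha_*$.

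The missing mechanism works as follows:
\begin{enumerate}
\item Take a maximum point $\hat x$ of $\Phi=u^\epsilon-u-\epsilon^\sigma w_\lambda(\cdot/\epsilon;\cdot,u)-\psi_\beta$, and freeze the corrector's slow parameter at $\hat x$.
\item Add to the test function the penalization $\kappa\psi(\cdot-\hat x)$, with $\kappa$ of order $r^{\alpha-2}\epsilon^\sigma\lambda^{-1}$. This is chosen so that the quadratic term beats the discrepancy $\epsilon^\sigma\lambda^{-1}|x-\hat x|^\alpha$ outside $B_r(\hat x)$.
\item The new maximum point $\tilde x$ then lies in $B_r(\hat x)$. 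The classical equation for $u^\epsilon$ can be tested there with a $C^{\sigma+\alpha}_b$ function, and moving the slow variable from $\tilde x$ to $\hat x$ costs $Cr^\alpha$.
\end{enumerate}
The total error is $C(\lambda+r^\alpha+\epsilon^\sigma\lambda^{-1}+\epsilon^\sigma\lambda^{-1}r^{\alpha-2})$, and the factor $r^{\alpha-2}$ is what produces the denominator $2+\alpha$.

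This balancing also needs the \emph{linear} vanishing-discount rate $\|\lambda w_\lambda+\overline H(x,u)\|_\infty\le C\lambda$ of Proposition~\ref{prop:approx-cell}(v). That rate comes from comparing $w_\lambda$ with a bounded exact corrector. With a sublinear rate $\lambda^a$, $a<1$, such as the one you write, the same terms balance only to $\epsilon^{\sigma a\alpha/(\alpha+2a)}$, which is strictly worse than $\epsilon^{\alpha_*}$.
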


The proof follows the strategy of Camilli and Marchi~\cite{CM09}, adapted to the nonlocal setting as in~\cite{RPT23}. Its key ingredient is the $C^{\sigma+\alpha}$ regularity of the solution of~\eqref{eq:HJ-bar}, given by Proposition~\ref{prop:existence-Csigma}, which applies since $\A(x)$ satisfies~\ref{A1} by Lemma~\ref{lem:A-reg}. Note that we do not use any comparison principle for viscosity solutions of~\eqref{eq:HJ-bar}, which, to our knowledge, is not available when $x \mapsto \A(x)$ is merely continuous: the equation holds pointwise for the classical solution $u$.

\bigskip
\section{A priori \texorpdfstring{$C^{\sigma + \alpha}$}{C^{sigma + alpha}}  estimates for effective-type problems}
\label{sec-reg}

This section is devoted to the proof of Theorem~\ref{thm1}, that is, interior $C^{\sigma+\alpha}$ estimates for viscosity solutions of~\eqref{eq:effHJ-x}. Our arguments build on those of Serra~\cite{Serra15}, and proceed in three steps.

We first treat kernels that do not depend on $x$, for which we adapt the blow-up and compactness argument of~\cite{Serra15}, based on a Liouville-type theorem. The only, but essential, new difficulty at this stage is the dependence of the measure sets $\mathcal A(x)$ on $x$, which must be tracked along the blow-up sequence and is controlled by~\ref{A1}. We then allow the kernels to depend on $x$, freezing the coefficients at a point and controlling the resulting error by~\ref{K1}. Finally, we combine these two estimates, through a scaling argument and a weighted H\"older seminorm, to conclude the proof of Theorem~\ref{thm1}.

\subsection{Kernels independent of $x$}
We first treat the case of kernels with frozen coefficients,
$$
K: (\R^N \setminus \{ 0 \}) \times X \to \R,
$$
for which assumptions~\ref{K0}--\ref{K2} specialize in the obvious way, with $x$ simply dropped throughout. In particular, since $\overline K^\mu$ no longer depends on $x$, assumption~\ref{K1} holds trivially.

Averaging preserves the symmetry and the two-sided bounds in~\eqref{eq:L0-class}, so that $\overline K^\mu \in \mathcal L_0(\sigma,\Lambda_0,\Lambda)$ for every $\mu\in\PP(X)$. Moreover, if $u \in L^1_\sigma(\R^N) \cap C^{\sigma+\epsilon}(B_r(x))$ for some $r,\epsilon>0$, Fubini--Tonelli's theorem gives
\begin{equation}\label{eq:L-mu-sym}
\overline L^\mu u(x) := \int_X L_{K(\cdot,\xi)} u(x)\, d\mu(\xi) = \frac{1}{2}\int_{\R^N} \delta^2 u(x,z)\, \overline K^\mu (z)\, dz = L_{\overline K^\mu} u(x).
\end{equation}

The key tool is the following Liouville-type theorem for operators in the class $\mathcal L_0(\sigma,\Lambda_0,\Lambda)$, due to Serra~\cite[Theorem~{ 2.1}]{Serra15}.

\begin{thm}[{\cite[Theorem~{ 2.1}]{Serra15}}]\label{thm:Liouville}
Let $\sigma_0 \in (0,2)$ and $\sigma\in[\sigma_0,2)$. There exists $\bar \alpha = \bar \alpha(N,\sigma_0,\Lambda_0,\Lambda) > 0$ such that the following holds. If $\ 0 < \alpha' < \alpha < \bar \alpha$ and $u \in C^{\sigma + \alpha'}_{loc}(\R^N)$ satisfies
\begin{itemize}
\item[(i)] there exists $C > 0$ such that, for all $0 \leq \gamma \leq \sigma + \alpha'$ and all $R \ge 1$,
	\[
	[u]_{C^\gamma (B_R)} \leq C\, R^{\sigma + \alpha - \gamma},
	\]
\item[(ii)] for all $h \in \R^N$,
	\[
	\MM_{\mathcal L_0}^- \big(u(\cdot + h) - u\big) \leq 0 \leq
	\MM_{\mathcal L_0}^+ \big(u(\cdot + h) - u\big) \quad \text{in } \R^N,
	\]
\item[(iii)] for every nonnegative $f \in L^1(\R^N)$ with compact support and $\|f\|_{L^1(\R^N)} = 1$,
	\[
	\MM_{\mathcal L_0}^+ \Big( \int_{\R^N} u(\cdot + h) f(h)\,dh - u \Big) \geq 0 \quad \text{in } \R^N,
	\]
\end{itemize}
then $u$ is a polynomial of degree at most $\nu := \lfloor \sigma + \alpha \rfloor$.
\end{thm}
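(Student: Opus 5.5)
Since this is \cite[Theorem~2.1]{Serra15}, we only recall how we would reprove it. The idea is to feed the growth bound (i) into the interior Hölder estimate of Caffarelli--Silvestre \cite{CS09} for functions $w$ with $\MM^-_{\mathcal L_0} w\le 0\le \MM^+_{\mathcal L_0} w$. We take $\bar\alpha$ to be that Hölder exponent, which is uniform for $\sigma\ge\sigma_0$. Rescaled to $B_R$, the estimate reads
\[
[w]_{C^{\bar\alpha}(B_R)}\le C R^{-\bar\alpha}\Big(\|w\|_{L^\infty(B_{2R})}+R^{\sigma}\int_{\R^N\setminus B_{2R}}\frac{|w(y)|}{|y|^{N+\sigma}}\,dy\Big).
\]
The mechanism is that any $w$ solving these two inequalities whose growth exponent at infinity is strictly below $\bar\alpha$ (say $|w|\le C(1+|y|)^{\beta}$ with $\beta<\bar\alpha<\sigma$, so the tail converges) satisfies $[w]_{C^{\bar\alpha}(B_R)}\lesssim R^{\beta-\bar\alpha}\to0$. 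Hence such a $w$ is constant.

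\emph{First step: increments.} We apply this to increments of $u$, which are admissible by (ii). Write $\nu=\lfloor\sigma+\alpha\rfloor$, and assume first $\nu=0$, i.e.\ $\sigma+\alpha<1$. Then $w_h=u(\cdot+h)-u$ satisfies $|w_h|\le C|h|^{\gamma}R^{\sigma+\alpha-\gamma}$ on $B_R$ by (i), and we choose $\gamma$ close to $\sigma+\alpha'$. The resulting growth exponent $\sigma+\alpha-\gamma$ is below $\bar\alpha$, so each $w_h$ is constant, $u$ is affine, and growth forces $u$ to be constant.

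\emph{Higher $\nu$.} When $\nu\ge1$, we differentiate: $v=\partial_e u$ is a locally uniform limit of difference quotients $w_{te}/t$. By stability of viscosity inequalities under uniform convergence with $L^1_\sigma$ control of the tails, $v$ again satisfies $\MM^-v\le0\le\MM^+v$, and its growth exponent is $\sigma+\alpha-1$. If that exponent is below $\bar\alpha$ we conclude as before that $\nabla u$ is constant. Otherwise we take second increments $u(\cdot+h)+u(\cdot-h)-2u$, equivalently differences of the $w_h$, which still satisfy (ii) because the Pucci operators are sub/superadditive. This lowers the exponent to $\sigma+\alpha-2<\bar\alpha$ (recall $\sigma<2$ and $\alpha<\bar\alpha$), provided we can make sense of second derivatives.

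\emph{Main obstacle.} The hardest step is the regime $\sigma+\alpha'\in(1,2)$. There $u$ is only $C^{1,\sigma+\alpha'-1}$, so $D^2u$ does not exist, and the two-sided inequality (ii) is only available for first-order increments. We would handle it with hypothesis (iii), which is a one-sided, convexity-type information for averages of translates. We would apply (iii) with $f$ a normalized indicator of a small ball and compare with $u$; this shows that suitable second-order quotients satisfy $\MM^+\ge0$. Combined with the $\MM^-\le0$ inequality inherited from (ii), we would then use the half-Harnack ($L^\epsilon$) estimate of \cite{CS09}. This should upgrade the one-sided bound on second quotients to a two-sided growth bound of order $R^{\sigma+\alpha-2}\to0$, and then the Hölder-decay argument above forces $D^2u$ (in the difference-quotient sense) to be constant. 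Degree considerations together with (i) then bound the polynomial degree by $\nu$. We expect getting the precise form of (iii) to interact correctly with the Pucci inequalities and the tails to be the delicate point; this is exactly where we would follow \cite{Serra15} closely.
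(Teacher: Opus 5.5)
The paper does not prove this statement. It quotes it from Serra and uses it as a black box in the blow-up argument of Proposition~\ref{prop:apriori-estimate}. Measured against what the statement requires, your proposal has a genuine gap.

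Your first step is correct. A function satisfying $\MM^-_{\mathcal L_0} w\le 0\le \MM^+_{\mathcal L_0} w$ in $\R^N$ with growth exponent below $\bar\alpha$ is constant. This settles $\nu=0$ (with $\gamma=\sigma+\alpha'$, the exponent is $\alpha-\alpha'$), and, through $\partial_e u$, it settles the case $\nu=1$ with $\sigma+\alpha-1<\bar\alpha$. In these cases hypothesis (iii) is never used.

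As soon as $\sigma+\alpha-1\ge\bar\alpha$, first increments grow too fast. This happens for every $\sigma\ge 1+\bar\alpha$ and in every case with $\nu=2$. The H\"older estimate applied to $u(\cdot+h)-u$ then only yields $[\nabla u]_{C^{\bar\alpha}(B_R)}=O(R^{\sigma+\alpha-1-\bar\alpha})$, with a positive exponent. It cannot be iterated, because increments of $\nabla u$ are second-order differences of $u$, about which (ii) says nothing. Getting past this $C^{1,\bar\alpha}$-type barrier is the whole content of the theorem.

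Your way past it fails, because second increments do not inherit (ii). Set $a=u(\cdot+h)-u$ and $b=u(\cdot-h)-u$. Subadditivity of $\MM^+$ and superadditivity of $\MM^-$ bound $\MM^+(a+b)$ from above and $\MM^-(a+b)$ from below, which are the wrong directions. The useful inequalities, $\MM^+(a+b)\ge \MM^+a+\MM^-b$ and $\MM^-(a+b)\le\MM^-a+\MM^+b$, have right-hand sides with no sign. Were your claim true, the theorem would follow from (i)--(ii) alone. Those hold for non-concave equations too, so the blow-up argument would yield an Evans--Krylov-type estimate without concavity.

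The same confusion recurs in your last paragraph. There is no ``$\MM^-\le0$ inherited from (ii)'' for the averages $\int(u(\cdot+h)-u)f(h)\,dh$. The kernel realizing $\MM^-(u(\cdot+h)-u)\le 0$ depends on $h$, and $\MM^-$ of an average is only bounded \emph{below} by the average of the $\MM^-$'s. What (iii) provides is purely one-sided: these averages are subsolutions, because concavity yields a single kernel that works for all $h$ at once. Compare the $h$-independent near-minimizer $\mu^-_\epsilon$ in Step~3 of the proof of Proposition~\ref{prop:apriori-estimate}.

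Converting this one-sided information into two-sided control requires a genuine Evans--Krylov mechanism. The $L^\epsilon$ estimate applied to $\sup g-g$, for such subsolutions $g$, must be combined with a pointwise relation between the different second-order quantities at the same point. In the local proof this role is played by $\mathrm{tr}\big(A(x)(D^2u(y)-D^2u(x))\big)\ge0$. Your proposal does not supply this step. Asserting that the half-Harnack inequality ``should upgrade'' the one-sided bound is precisely the part of Serra's theorem that needs proof.
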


The following proposition is the core of Theorem~\ref{thm1}. It is the counterpart of~\cite[Proposition~3.1]{Serra15}: the estimate is uniform over all kernels satisfying~\ref{K0} and~\ref{K2} and all running costs satisfying~\ref{L1}, and, in addition, it allows for a general norm $\|\cdot\|_*$ on $E$ and for point-dependent measure sets $\A(x)$.

\begin{prop}[A priori $C^{\sigma+\alpha}$ estimate]\label{prop:apriori-estimate}
Let $\sigma\in(0,2)$, and let $\bar\alpha\in(0,1)$ be given by Theorem~\ref{thm:Liouville}.
For all $0 < \alpha' < \alpha < \bar \alpha$ with $\nu := \lfloor \sigma+\alpha \rfloor < \sigma + \alpha'$, there exists {a constant $C > 0$, depending only on $N$, $\sigma$, $\alpha$, $\alpha'$ and the constants $\Lambda_0$, $\Lambda$, $K^*$ in~\ref{K0} and~\ref{K2},} such that,
if $u \in C^{\sigma+\alpha'}_b(\R^N)$ is a viscosity solution of
\begin{equation}\label{eq:effHJ-no-x}
	 \inf_{\mu \in \A(x)} \Big\{ \overline L^\mu u(x) + \overline \ell^\mu(x) \Big\} = 0
	\quad \text{in } B,
\end{equation}
where the family of kernels $\big(K(\cdot,\xi)\big)_{\xi \in X}$ satisfies~\ref{K0} and~\ref{K2}, the family of measure sets $(\A(x))_{x \in B}$ satisfies~\ref{A1}, and the running cost $\ell$ satisfies~\ref{L1}, with this exponent $\alpha$, then $u \in C^{\sigma+\alpha}(\overline{B_{1/2}})$ and
\[
	[ u ]_{C^{\sigma+\alpha}(B_{1/2})} \ \le \ C \big(1 + A_0 \big) \big( \|u\|_{C^{\sigma+\alpha'}(\R^N)} +  C_0 \big),
\]
where $A_0 \geq 0$ and $C_0>0$ are the constants in~\ref{A1} and~\ref{L1}, respectively.
\end{prop}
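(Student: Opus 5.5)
We argue by contradiction and compactness, following the blow-up scheme of \cite[Proposition~3.1]{Serra15}. Suppose the estimate fails. Then there are sequences of kernels $K_k$ satisfying~\ref{K0},~\ref{K2}, measure sets $\A_k(x)$ satisfying~\ref{A1} with constants $A_{0,k}$, costs $\ell_k$ satisfying~\ref{L1} with constants $C_{0,k}$, and solutions $u_k$, normalized so that
\[
\big(1+A_{0,k}\big)\big(\|u_k\|_{C^{\sigma+\alpha'}(\R^N)}+C_{0,k}\big)\le 1,
\qquad
\sup_{z\in B_{1/2}}\ \sup_{r>0}\ r^{\alpha-\alpha'}[u_k]_{C^{\sigma+\alpha'}(B_r(z))}\to\infty .
\]
The localized weighted quantity is the one Serra uses; it controls $[u]_{C^{\sigma+\alpha}(B_{1/2})}$ once $\nu<\sigma+\alpha'$. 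Let $\theta(r):=\sup_k\sup_{z}\sup_{r'>r}(r')^{\alpha-\alpha'}[u_k]_{C^{\sigma+\alpha'}(B_{r'}(z))}$, which is finite for each $r>0$, nonincreasing, and tends to $\infty$ as $r\downarrow0$. We select $r_m\downarrow0$, indices $k_m$ and points $z_m\in B_{1/2}$ that almost realize $\theta(r_m)$. Then we set
\[
v_m(x):=\frac{u_{k_m}(z_m+r_m x)-p_m(x)}{r_m^{\sigma+\alpha}\,\theta(r_m)},
\]
where $p_m$ is the Taylor polynomial of degree $\nu$ at $z_m$, or the $L^2(B_{r_m}(z_m))$-projection onto polynomials, as in Serra. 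The standard argument gives:
\begin{itemize}
\item $[v_m]_{C^{\gamma}(B_R)}\le CR^{\sigma+\alpha-\gamma}$ for $0\le\gamma\le\sigma+\alpha'$ and $R\ge1$;
\item nondegeneracy on $B_1$, namely $[v_m]_{C^{\sigma+\alpha'}(B_1)}\ge c>0$;
\item vanishing of the moments of $v_m$ up to order $\nu$.
\end{itemize}
By Arzel\`a--Ascoli, $v_m\to v$ in $C^{\sigma+\alpha''}_{loc}$ for $\alpha''<\alpha'$, and $v$ satisfies hypothesis (i) of Theorem~\ref{thm:Liouville}.

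The heart of the proof is to verify hypotheses (ii) and (iii) of Theorem~\ref{thm:Liouville} for $v$. This is where the $x$-dependence of $\A(x)$ enters. For (ii) we compare the equation at $x$ and at $x+h$. Fix $\mu\in\A(x)$ nearly optimal at $x$; it may not lie in $\A(x+h)$. Here we apply~\ref{A1} to the test function
\[
\phi_{x+h}:=\tfrac12\int\delta^2u(x+h,z)K(z,\cdot)\,dz+\ell(x+h,\cdot),
\]
which lies in $E$ by~\ref{K2} and~\ref{L1}. This produces $\mu'\in\A(x+h)$ with
\[
|\overline{\phi_{x+h}}^{\mu'}-\overline{\phi_{x+h}}^{\mu}|\le A_0\|\phi_{x+h}\|_*|h|^\alpha .
\]
By~\ref{K2}, $\|\phi_{x+h}\|_*\le C K^*\|u\|_{C^{\sigma+\alpha'}}+C_0$, since $\int|\delta^2u|\,|z|^{-N-\sigma}dz$ is finite. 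Combining this with the $C^\alpha$ regularity of $\overline\ell^\mu$ from~\ref{L1}, we obtain
\[
\MM^-\big(u(\cdot+h)-u\big)(x)\le L_{\overline K^{\mu}}\big(u(\cdot+h)-u\big)(x)\le C(1+A_0)\big(\|u\|_{C^{\sigma+\alpha'}}+C_0\big)|h|^\alpha ,
\]
where we used~\eqref{eq:L-mu-sym} and the fact that $\overline K^\mu\in\mathcal L_0$. The reverse inequality for $\MM^+$ follows by exchanging the roles of $x$ and $x+h$. This is exactly where the factor $(1+A_0)$ arises.

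After rescaling, the incremental quotients of $v_m$ satisfy the inequalities with error
\[
\frac{r_m^\sigma (r_m|h|)^\alpha}{r_m^{\sigma+\alpha}\theta(r_m)}=\frac{|h|^\alpha}{\theta(r_m)}\to0 .
\]
The Taylor polynomial drops out: its increments have degree below $\sigma$ when $\nu\le1$, and for $\sigma>1$ one handles this via the usual symmetric second-difference trick. For (iii) we average the same estimate in $h$ against $f$. Concavity of the $\inf$, convexity of $\MM^+$, and again~\ref{A1} applied pointwise give
\[
\MM^+\Big(\int u(\cdot+h)f(h)\,dh-u\Big)\ge -o(1)
\]
after rescaling. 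To pass to the limit in the viscosity inequalities for Pucci operators we use stability under $C^{\sigma+\alpha''}_{loc}$ convergence together with the tail control coming from the growth bound (i). The difference operators are used in place of $v$ itself, as in Serra.

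Theorem~\ref{thm:Liouville} then says $v$ is a polynomial of degree at most $\nu$. The vanishing moments force $v\equiv0$, which contradicts the nondegeneracy on $B_1$ passed to the limit in $C^{\sigma+\alpha''}$; we choose $\alpha''$ close enough to $\alpha'$ that this still holds, as in Serra. I expect the main obstacle to be step (ii)--(iii). The point is to make sure the error from moving between $\A(x)$ and $\A(x+h)$ scales like $|h|^\alpha$ times a quantity bounded by $\|u\|_{C^{\sigma+\alpha'}}$. This requires that~\ref{A1} be applied with a test function whose $\|\cdot\|_*$-norm is controlled through~\ref{K2}, and that it be applied at a single point at a time, since the viscosity formulation only allows pointwise use. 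If direct pointwise use is delicate in the viscosity sense, I would first work with $u\in C^{\sigma+\alpha'}$, where the equation holds classically, which is the standing assumption of the proposition.
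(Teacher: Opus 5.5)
Your overall route is the paper's. You use the same normalization $(1+A_{0,k})(\|u_k\|_{C^{\sigma+\alpha'}(\R^N)}+C_{0,k})\le 1$, which absorbs the factor $A_{0,k}$ coming from \ref{A1}, and Serra's blow-up. You also apply \ref{A1} pointwise to the classical equation with the test function $\phi_{x+h}\in E$, whose $\|\cdot\|_*$-norm is controlled by \ref{K2} and \ref{L1}. Your measure $\mu'\in\A(x+h)$ is just another way of writing the paper's Hausdorff-distance bound on $E_\A(\bar h)$. (The weight in $\theta$ should be $(r')^{\alpha'-\alpha}$, not $(r')^{\alpha-\alpha'}$.)

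However, the inequality you derive in (ii) has the wrong direction. If $\mu\in\A(x)$ is nearly optimal at $x$, then $\overline{\phi_{x+h}}^{\,\mu'}\ge 0$ together with \ref{A1} yields only the \emph{lower} bound $L_{\overline K^\mu}(u(\cdot+h)-u)(x)\ge -C|h|^\alpha-\epsilon$, that is, the $\MM^+$ half of (ii). An upper bound would require $\mu'$ to be nearly optimal at $x+h$, which \ref{A1} does not provide; the $\MM^-$ half comes instead from a near-minimizer at $x+h$. This is not cosmetic, because it is what makes (iii) work. The near-minimizer at $x$ does not depend on $h$, so the linear inequality can be integrated against $f(h)\,dh$ with a fixed kernel before bounding by $\MM^+$; this is the paper's key observation after \eqref{eq:mu-minus}. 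Averaging the $\MM^+$ inequality itself fails, since $\MM^+$ is sublinear and Jensen goes the wrong way. Your concavity route does work if the infimum is frozen at $x$. Set $F_x(w):=\inf_{\mu\in\A(x)}\{L_{\overline K^\mu}w(x)+\overline\ell^\mu(x)\}$. Then $F_x\big(\int u(\cdot+h)f(h)\,dh\big)\ge\int F_x(u(\cdot+h))f(h)\,dh\ge -CR_0^\alpha$ by \ref{A1} and \ref{L1}, and $F_x(w)-F_x(u)\le\MM^+(w-u)(x)$. This is the paper's computation rearranged, and no convexity of $\MM^+$ is involved.

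The second gap is at the end. The bound $[v_m]_{C^{\sigma+\alpha'}(B_1)}\ge c$ is not preserved under $C^{\sigma+\alpha''}_{loc}$ convergence for any $\alpha''<\alpha'$, and choosing $\alpha''$ close to $\alpha'$ does not help, because the seminorm may concentrate at vanishing scales. For example, a bump $s_m^{\sigma+\alpha'}g(\cdot/s_m)$ has the same $C^{\sigma+\alpha'}$ seminorm for every $m$, but tends to $0$ in $C^{\sigma+\alpha''}$ as $s_m\to0$. Nothing in the properties you list for $v_m$ (growth, vanishing moments, seminorm lower bound) excludes this, and then the limit $v\equiv0$ produces no contradiction. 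This is why the paper, following \cite[(3.12)]{Serra15}, performs a further rescaling to obtain \eqref{eq:v_m-osc}, namely $\sup_{|l|=\nu}\mathrm{osc}_B D^lv_m\ge\frac14$. That quantity does pass to the limit, because $\nu<\sigma+\alpha''$ gives uniform convergence of $D^\nu v_m$ on $\overline B$. With these two corrections your argument coincides with the paper's.
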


\begin{proof}[Proof of Proposition~\ref{prop:apriori-estimate}]
We follow closely the proof of~\cite[Proposition~3.1]{Serra15}, adapted to track the dependence of $\A(x)$ on $x$. We detail the steps in which this dependence plays a role, and only sketch those that are identical to~\cite{Serra15}.
Fix $\sigma$, $\alpha'$, $\alpha$ and $\nu$ as in the statement.

\smallskip
\emph{Step 1 (Contradiction and normalization).}
Suppose the estimate fails. Then, for each integer $k\ge1$, there exist a family of kernels $\big(K_k(\cdot,\xi)\big)_{\xi\in X}$ satisfying~\ref{K0} and~\ref{K2}, a family of measure sets $(\A_k(x))_{x \in B}$ satisfying~\ref{A1} with constant $A_{0,k} \geq 0$, a running cost $\ell_k$ satisfying~\ref{L1} with constant $C_{0,k} > 0$, and a viscosity solution $u^k \in C^{\sigma + \alpha'}_b(\R^N)$ of
\begin{equation}\label{eq:uk}
\inf_{\mu \in \A_k(x)} \Big\{ \overline L_k^\mu u^k(x) + \overline \ell_k^\mu(x) \Big\} = 0 \quad \text{in } B,
\end{equation}
where $\overline L_k^\mu := L_{\overline K_k^\mu}$, such that
\begin{equation}\label{eq:contr-est}
	[u^k]_{C^{\sigma + \alpha}(B_{1/2})} >
	k \big(1 + A_{0,k}\big) \big( \|u^k\|_{C^{\sigma+\alpha'}(\R^N)} + C_{0,k} \big).
\end{equation}

Equation~\eqref{eq:uk} is invariant under the simultaneous rescaling $u^k \mapsto u^k/\lambda$, $\ell_k \mapsto \ell_k/\lambda$, for any $\lambda>0$, which leaves the kernels $K_k$ unchanged. Choosing $\lambda = \lambda_k := (1+A_{0,k})\big(\|u^k\|_{C^{\sigma+\alpha'}(\R^N)} + C_{0,k}\big) > 0$, we may therefore assume that
\begin{equation}\label{eq:u^k-norm}
	\|u^k\|_{C^{\sigma+\alpha'}(\R^N)} + C_{0,k} \ = \ \frac{1}{1+A_{0,k}} \ \le \ 1, \qquad
	[u^k]_{C^{\sigma+\alpha}(B_{1/2})} \ > \ k.
\end{equation}
Throughout, the kernels $K_k$ satisfy~\ref{K0} and~\ref{K2} with the same constants $\Lambda_0$, $\Lambda$ and $K^*$.

We will reach a contradiction by a blow-up and compactness argument, using Theorem~\ref{thm:Liouville} to identify the limit of the rescaled sequence.

\smallskip
\emph{Step 2 (Blow-up sequence $(v_m)_m$).}
This step reproduces the construction of~\cite{Serra15}, and the dependence of $\A(x)$ on $x$ does not enter yet. For this reason, we only record the outcome, referring to~\cite{Serra15} for the details. Set
\[
\rho(r) := \sup_{k \in \N}\ \sup_{z \in B_{1/2}}\ \sup_{r' \geq r}\ (r')^{\alpha' - \alpha} [u^k]_{C^{\sigma + \alpha'}(B_{r'}(z))},
\]
a nonincreasing function, finite for every $r>0$, with $\rho(r)\to+\infty$ as $r\to0$, since $[u^k]_{C^{\sigma+\alpha}(B_{1/2})} > k$ by~\eqref{eq:u^k-norm}. For each $m\ge1$, we choose $k_m\in\N$, $z_m \in B_{1/2}$ and $r_m \geq 1/m$ nearly realizing the supremum defining $\rho(1/m)$, so that $r_m \to 0$.

Let $p_m$ be the best $L^2(B_{r_m}(z_m))$-approximation of $u^{k_m}$ by polynomials of degree at most $\nu$, and set
\[
	v_m(x) \ := \ \frac{u^{k_m}(z_m+r_m x) - p_m(z_m+r_m x)}{r_m^{\sigma+\alpha}\,\rho(r_m)}, \qquad x \in \R^N.
\]
By the choice of $p_m$, each $v_m$ is orthogonal to $P_\nu$  (the space of polynomials of degree at most $\nu$) in $L^2(B)$, that is,
\begin{equation}\label{eq:v_m-orth}
	\int_{B} v_m(x)\, q(x)\, dx \ = \ 0 \qquad \text{for all } q \in P_\nu.
\end{equation}
As in~\cite{Serra15}, the choice of $(k_m,z_m,r_m)$ gives
\begin{equation}\label{eq:v_m-nondeg}
	[v_m]_{C^{\sigma+\alpha'}(B)} \ \ge \ \frac12,
\end{equation}
and, after a further rescaling as in~\cite[(3.12)]{Serra15}, which preserves~\eqref{eq:v_m-orth}, \eqref{eq:v_m-nondeg} and~\eqref{eq:v_m-Growth} below, we may also assume that
\begin{equation}\label{eq:v_m-osc}
	\sup_{|l|=\nu}\ \mathrm{osc}_{B} D^l v_m \ \ge \ \frac14,
\end{equation}
while the monotonicity of $\rho$, the $L^2$-optimality of $p_m$ and interpolation give
\begin{equation}\label{eq:v_m-Growth}
	[v_m]_{C^\gamma(B_R)} \ \le \ CR^{\sigma+\alpha-\gamma} \qquad \text{for all } \gamma \in [0,\sigma+\alpha'] \text{ and } R\ge1.
\end{equation}
By the Arzel\`a--Ascoli theorem and a diagonal argument, there exists $v \in C^{\sigma+\alpha'}_{loc}(\R^N)$ such that, up to a subsequence, $v_m \to v$ in $C^{\sigma+\alpha'/2}_{loc}(\R^N)$ as $m\to\infty$.

\smallskip
\emph{Step 3 (Hypotheses of Theorem~\ref{thm:Liouville} for the limit $v$).}
Hypothesis~(i) is immediate, while hypotheses~(ii) and~(iii) are where the dependence of $\A(x)$ on $x$ enters.
In~\cite{Serra15}, the infimum is taken over the same index set at every point, so that comparing the equations satisfied at $\bar x$ and $\bar x+\bar h$ only involves the H\"older regularity of the running costs, and directly yields Pucci inequalities for $u(\cdot+\bar h)-u$. Here, the two equations involve different index sets, $\A_{k_m}(\bar x)$ and $\A_{k_m}(\bar x+\bar h)$, and this mismatch produces an error term $E_\A(\bar h)$, defined below. We control this term using~\ref{A1}, show that it vanishes in the blow-up limit, and then select a near-optimal measure $\mu\in\A_{k_m}(\bar x)$, whose rescaled kernel $\widetilde K^\mu \in \mathcal L_0$ allows us to pass to the extremal operators $\MM^\pm_{\mathcal L_0}$.

\smallskip
\emph{(i).} Since $v_m \to v$ in $C^{\sigma+\alpha'/2}_{loc}(\R^N)$, the lower semicontinuity of H\"older seminorms allows us to pass to the limit in~\eqref{eq:v_m-Growth}, which gives hypothesis~(i) of Theorem~\ref{thm:Liouville} for $v$:
\[
	[v]_{C^\gamma(B_R)} \ \le \ CR^{\sigma+\alpha-\gamma} \qquad \text{for all } \gamma \in [0,\sigma+\alpha'] \text{ and } R\ge1.
\]

\smallskip
\emph{(ii).} Let $x, h \in \R^N$ with $|x| + |h| < 1/(2r_m)$, and set $\bar x := z_m+r_m x$ and $\bar h := r_m h$, so that $\bar x, \bar x+\bar h \in B$. Since $u^{k_m} \in C^{\sigma+\alpha'}_b(\R^N)$, it satisfies~\eqref{eq:uk} pointwise in $B$. Writing~\eqref{eq:uk} at $\bar x+\bar h$ and at $\bar x$, and subtracting, we get
\[
	\inf_{\mu\in\A_{k_m}(\bar x+\bar h)}\Big\{\overline L_{k_m}^\mu u^{k_m}(\bar x+\bar h)+
		\overline\ell_{k_m}^\mu(\bar x+\bar h)\Big\}
	- \inf_{\mu\in\A_{k_m}(\bar x)}\Big\{\overline L_{k_m}^\mu u^{k_m}(\bar x)+
		\overline\ell_{k_m}^\mu(\bar x)\Big\} = 0.
\]
Inserting and subtracting the intermediate quantity obtained by freezing the index set at $\bar x$, we obtain
\begin{equation}\label{eq:v-decomp}
	E_{\A}(\bar h)  +  E_{L}(\bar h)  = \ 0,
\end{equation}
where
\begin{eqnarray*}
	E_{\A}(\bar h)  & :=  &
	\inf_{\mu\in\A_{k_m}(\bar x+\bar h)}\Big\{\overline L_{k_m}^\mu u^{k_m}(\bar x+\bar h)+
		\overline\ell_{k_m}^\mu(\bar x+\bar h)\Big\}
	- \inf_{\mu\in\A_{k_m}(\bar x)}\Big\{\overline L_{k_m}^\mu u^{k_m}(\bar x+\bar h)+
		\overline\ell_{k_m}^\mu(\bar x+\bar h)\Big\}, \\
	E_{L}(\bar h)  & :=  &
	\inf_{\mu\in\A_{k_m}(\bar x)}\Big\{\overline L_{k_m}^\mu u^{k_m}(\bar x+\bar h)+
		\overline\ell_{k_m}^\mu(\bar x+\bar h)\Big\} -
	\inf_{\mu\in\A_{k_m}(\bar x)}\Big\{\overline L_{k_m}^\mu u^{k_m}(\bar x)+
		\overline\ell_{k_m}^\mu(\bar x)\Big\}.
\end{eqnarray*}

\smallskip
\emph{The error term $E_{\A}(\bar h)$.} Since $\overline L_{k_m}^\mu$ and $\overline\ell_{k_m}^\mu$ are averages, we may write, for every $\mu\in\PP(X)$,
\[
	\overline L_{k_m}^\mu u^{k_m}(\bar x+\bar h)+\overline\ell_{k_m}^\mu(\bar x+\bar h) \ = \ \int_X \phi_{k_m}\,d\mu,
\]
where
\[
	\phi_{k_m}(\xi) := L_{K_{k_m}(\cdot,\xi)}u^{k_m}(\bar x+\bar h) + \ell_{k_m}(\bar x+\bar h,\xi).
\]
By~\ref{K2} and~\ref{L1}, $\phi_{k_m} \in E$. Since the Hausdorff distance between two nonempty bounded subsets of $\R$ dominates the difference of their infima, we have
\begin{eqnarray*}
	\left| E_{\A}(\bar h)\right|
	& = &\Big|  \inf_{\mu\in \A_{k_m}(\bar x + \bar h)} \int_X  \phi_{k_m}\, d\mu -
		  \inf_{\mu\in \A_{k_m}(\bar x)} \int_X  \phi_{k_m}\, d\mu \Big|
	 =  \big|\inf \langle \phi_{k_m}\rangle_{\bar x + \bar h} - \inf \langle \phi_{k_m}\rangle_{\bar x}\big| \\
	& \le & d_H \big(\langle  \phi_{k_m}\rangle_{\bar x},\langle  \phi_{k_m}\rangle_{\bar x+\bar h}\big)
	 \le  A_{0,k_m}\, \|  \phi_{k_m}\|_* \, |\bar h|^\alpha,
\end{eqnarray*}
where the last inequality follows from~\ref{A1}. By~\ref{K2} and~\ref{L1},
\begin{eqnarray*}
	\|  \phi_{k_m}\|_*
	& \le & \frac12\int_{\R^N} |\delta^2 u^{k_m}(\bar x+\bar h, z)|\, \|K_{k_m}(z,\cdot)\|_*\, dz
		+ \|\ell_{k_m}(\bar x+\bar h,\cdot)\|_* \\
	& \le & \frac{K^*}{2}\int_{\R^N} |\delta^2 u^{k_m}(\bar x+\bar h, z)|\, \frac{dz}{|z|^{N+\sigma}}
		+  C_{0,k_m} \\
	& \le  & C\Big(\|u^{k_m}\|_{C^{\sigma+\alpha'}(\R^N)} + C_{0,k_m}\Big)
	\ = \ \frac{C}{1+A_{0,k_m}},
\end{eqnarray*}
with $C$ depending only on $N,\sigma,K^*,\alpha'$, where the last equality is the normalization~\eqref{eq:u^k-norm}. Therefore,
\begin{equation}\label{eq:est-E_A}
	|E_{\A}(\bar h)| \ \le \ C\,\frac{A_{0,k_m}}{1+A_{0,k_m}}\, |\bar h|^\alpha \ \le \ C\,r_m^\alpha |h|^\alpha,
\end{equation}
with $C$ independent of $m$. The key point is that the factor $A_{0,k_m}$ coming from~\ref{A1} is exactly compensated by the factor $1/(1+A_{0,k_m})$ built into the normalization of Step~1.

\smallskip
\emph{The error term $E_L(\bar h)$.}
Fix $\epsilon>0$, and let $\mu_\epsilon^- \in \A_{k_m}(\bar x)$ be an $\epsilon$-minimizer of the second infimum in the definition of $E_L(\bar h)$, namely
\[
	\overline L_{k_m}^{\mu_\epsilon^-} u^{k_m}(\bar x) + \overline\ell_{k_m}^{\mu_\epsilon^-}(\bar x)
	\ \le \ \inf_{\mu\in\A_{k_m}(\bar x)}\Big\{\overline L_{k_m}^\mu u^{k_m}(\bar x)+\overline\ell_{k_m}^\mu(\bar x)\Big\} + \epsilon.
\]
Bounding the first infimum from above by its value at $\mu_\epsilon^-$, and using that the kernels do not depend on $x$, we get
\begin{eqnarray*}
	E_L(\bar h)
	& \le &
	\overline L_{k_m}^{\mu_\epsilon^-} u^{k_m}(\bar x+\bar h) + \overline\ell_{k_m}^{\mu_\epsilon^-}(\bar x+\bar h)
	- \Big( \overline L_{k_m}^{\mu_\epsilon^-} u^{k_m}(\bar x) + \overline\ell_{k_m}^{\mu_\epsilon^-}(\bar x) \Big) + \epsilon \\
	& = & \overline L_{k_m}^{\mu_\epsilon^-} \big(u^{k_m}(\cdot+\bar h) - u^{k_m}\big)(\bar x)
	+ \Big( \overline\ell_{k_m}^{\mu_\epsilon^-}(\bar x+\bar h) - \overline\ell_{k_m}^{\mu_\epsilon^-}(\bar x) \Big) + \epsilon \\
	& \le & \overline L_{k_m}^{\mu_\epsilon^-} \big(u^{k_m}(\cdot+\bar h) - u^{k_m}\big)(\bar x)
	+ C_{0,k_m}|\bar h|^\alpha + \epsilon,
\end{eqnarray*}
where we used~\ref{L1} in the last line. By~\eqref{eq:v-decomp} and~\eqref{eq:est-E_A}, $|E_L(\bar h)| = |E_\A(\bar h)| \le C|\bar h|^\alpha$, and $C_{0,k_m} \le 1$ by~\eqref{eq:u^k-norm}. Hence, recalling that $\bar h = r_m h$,
\begin{equation}\label{eq:mu-minus}
	\overline L_{k_m}^{\mu_\epsilon^-}\big(u^{k_m}(\cdot+\bar h)-u^{k_m}\big)(\bar x) \ \ge \ -C\,r_m^\alpha|h|^\alpha - \epsilon.
\end{equation}
Symmetrically, taking $\mu_\epsilon^+ \in \A_{k_m}(\bar x)$ an $\epsilon$-minimizer of the first infimum in the definition of $E_L(\bar h)$, we obtain
\begin{equation}\label{eq:mu-plus}
	\overline L_{k_m}^{\mu_\epsilon^+}\big(u^{k_m}(\cdot+\bar h)-u^{k_m}\big)(\bar x) \ \le \ C\,r_m^\alpha|h|^\alpha + \epsilon.
\end{equation}

\smallskip
\emph{The equation for $v_m$.} We now rewrite~\eqref{eq:mu-minus} and~\eqref{eq:mu-plus} in terms of $v_m$. By~\eqref{eq:L-mu-sym}, for every $\mu\in\PP(X)$,
\[
	\overline L_{k_m}^\mu \big(u^{k_m}(\cdot+\bar h)-  u^{k_m}\big)(\bar x)
	\ = \ \frac12\int_{\R^N} \Big(\delta^2 u^{k_m}(\bar x+\bar h,z) - \delta^2 u^{k_m}(\bar x,z)\Big)\, \overline K_{k_m}^\mu(z)\, dz.
\]
By definition of $v_m$, for all $y, z \in \R^N$,
\[
	\rho(r_m)\,r_m^{\sigma+\alpha}\,\delta^2v_m(y,z) \ = \ \delta^2u^{k_m}(z_m+r_m y,r_m z) - \delta^2 p_m(z_m+r_m y,r_m z).
\]
Since $\nu \le 2$, $\delta^2 p_m(\cdot,z)$ is constant, so that the polynomial $p_m$ cancels when taking the difference between $y = x+h$ and $y = x$. Changing variables $z \mapsto r_m z$ in the integral above, we therefore obtain
\begin{eqnarray*}
	\overline L_{k_m}^\mu \big(u^{k_m}(\cdot+\bar h)  -  u^{k_m}\big)(\bar x)
	& = & \rho(r_m)\,r_m^{\sigma+\alpha}\, \frac12\int_{\R^N} \Big(\delta^2 v_m(x+h, z) - \delta^2 v_m(x,z)\Big)\,
	r_m^N\,\overline K_{k_m}^\mu(r_m z)\, dz \\
	& = & \rho(r_m)\,r_m^{\alpha}\, L_{\widetilde K^\mu}\big(v_m(\cdot+h)-v_m\big)(x),
\end{eqnarray*}
where
\[
	\widetilde K^\mu(z):=r_m^{N+\sigma}\,\overline K_{k_m}^\mu(r_m z) \in \mathcal L_0(\sigma,\Lambda_0,\Lambda).
\]

Applying this identity to $\mu_\epsilon^\pm$ in~\eqref{eq:mu-plus} and~\eqref{eq:mu-minus}, and dividing by $\rho(r_m)\,r_m^\alpha$, we obtain
\begin{equation}\label{eq:v_m}
	L_{\widetilde K^{\mu_\epsilon^+}}\big(v_m(\cdot+h)-v_m\big)(x)
	\le \frac{C|h|^\alpha + \epsilon\, r_m^{-\alpha}}{\rho(r_m)}, \qquad
	L_{\widetilde K^{\mu_\epsilon^-}}\big(v_m(\cdot+h)-v_m\big)(x)
	\ge  -\frac{C|h|^\alpha + \epsilon\, r_m^{-\alpha}}{\rho(r_m)}.
\end{equation}
Since $\widetilde K^{\mu_\epsilon^\pm} \in \mathcal L_0(\sigma,\Lambda_0,\Lambda)$, the left-hand sides are bounded below and above by $\MM^-_{\mathcal L_0}$ and $\MM^+_{\mathcal L_0}$, respectively, which do not depend on $\epsilon$. Letting $\epsilon \to 0$, we get
\[
	\MM^-_{\mathcal L_0}\big(v_m(\cdot+h)-v_m\big)(x)
	\ \le \ \frac{C|h|^\alpha}{\rho(r_m)}, \qquad
	 \MM^+_{\mathcal L_0}\big(v_m(\cdot+h)-v_m\big)(x)
	 \ \ge \ -\frac{C|h|^\alpha}{\rho(r_m)}.
\]
Letting now $m\to\infty$, and using $\rho(r_m)\to+\infty$, $v_m\to v$ in $C^{\sigma+\alpha'/2}_{loc}(\R^N)$ and the uniform growth bound~\eqref{eq:v_m-Growth}, which controls the tails of the integrals, we obtain, for every $h\in\R^N$,
\[
	\MM_{\mathcal L_0}^-\big(v(\cdot+h)-v\big) \ \leq \ 0 \ \leq \ \MM_{\mathcal L_0}^+\big(v(\cdot+h)-v\big) \qquad \text{in } \R^N,
\]
which is hypothesis~(ii) of Theorem~\ref{thm:Liouville}.

\smallskip
\emph{(iii).} Let $f\in L^1(\R^N)$ be nonnegative, supported in $B_{R_0}$ for some $R_0>0$, with $\|f\|_{L^1(\R^N)}=1$. For hypothesis~(iii), only the lower bound~\eqref{eq:mu-minus} is needed. The key observation is that $\mu_\epsilon^-$ is an $\epsilon$-minimizer of the infimum at $\bar x$ only, so that $\mu_\epsilon^-$, and hence $\widetilde K^{\mu_\epsilon^-}$, does \emph{not} depend on $\bar h$. This allows us to keep the kernel fixed while integrating in $h$.

Let $x \in \R^N$ with $|x| + R_0 < 1/(2r_m)$. Multiplying the second inequality in~\eqref{eq:v_m} by $f(h) \geq 0$, integrating over $h \in B_{R_0}$, and using Fubini's theorem together with $\|f\|_{L^1(\R^N)} = 1$, as in~\cite{Serra15}, we obtain
\[
	L_{\widetilde K^{\mu_\epsilon^-}}\Big(\int_{\R^N} v_m(\cdot+h) f(h)\,dh - v_m\Big)(x) \ \geq \
	-\frac{C R_0^\alpha + \epsilon\, r_m^{-\alpha}}{\rho(r_m)}.
\]
Since $\widetilde K^{\mu_\epsilon^-} \in \mathcal L_0(\sigma,\Lambda_0,\Lambda)$, the left-hand side is bounded above by $\MM^+_{\mathcal L_0}$ applied to the same function, which does not depend on $\epsilon$. Letting $\epsilon \to 0$, we get
\[
	\MM^+_{\mathcal L_0}\Big(\int_{\R^N} v_m(\cdot+h) f(h)\,dh - v_m\Big)(x) \ \geq \ -\frac{C R_0^\alpha}{\rho(r_m)}.
\]
Letting now $m\to\infty$, and using $\rho(r_m)\to+\infty$, $v_m \to v$ in $C^{\sigma+\alpha'/2}_{loc}(\R^N)$ and the growth bound~\eqref{eq:v_m-Growth} to control the tails, we conclude that
\[
	\MM^+_{\mathcal L_0}\Big(\int_{\R^N} v(\cdot+h) f(h)\,dh - v\Big) \ \geq \ 0 \qquad \text{in } \R^N,
\]
which is hypothesis~(iii) of Theorem~\ref{thm:Liouville}.

\smallskip
\emph{Step 4 (Conclusion).}
By Step~3, $v$ satisfies all the hypotheses of Theorem~\ref{thm:Liouville}, and is therefore a polynomial of degree at most $\nu$. On the other hand, passing~\eqref{eq:v_m-orth} to the limit, we obtain that $v$ is orthogonal to every polynomial of degree at most $\nu$ in $L^2(B)$, and hence $v \equiv 0$ in $B$. But passing~\eqref{eq:v_m-osc} to the limit, as in~\cite{Serra15}, which is possible since $D^\nu v_m \to D^\nu v$ uniformly in $\overline B$, shows that $v$ cannot vanish identically in $B$, a contradiction. This proves the proposition.
\end{proof}

\subsection{Kernels depending on \texorpdfstring{$x$}{x}}
We now allow the kernel to depend on $x$,
$$
K: B \times (\R^N \setminus \{ 0 \}) \times X \to \R.
$$
Freezing the kernel at a point, we apply Proposition~\ref{prop:apriori-estimate} to the frozen problem, and control the resulting error by the H\"older continuity of $x \mapsto \overline K^\mu(x, \cdot)$ given by~\ref{K1}.

\begin{prop}[$C^{\sigma+\alpha}$ estimate for $x$-dependent kernels]\label{prop:x-dep-estimate}
Let $\sigma\in(0,2)$, and let $\bar\alpha\in(0,1)$ be given by Theorem~\ref{thm:Liouville}.
For all $0 < \alpha' < \alpha < \bar \alpha$ with $\nu := \lfloor \sigma+\alpha \rfloor < \sigma + \alpha'$, there exists {a constant $C > 0$, depending only on $N$, $\sigma$, $\alpha$, $\alpha'$ and the constants $\Lambda_0$, $\Lambda$, $K_0$, $K^*$ in~\ref{K0}--\ref{K2},} such that,
if $u \in C^{\sigma+\alpha}(\overline B)\cap C^\alpha_b(\R^N)$ is a viscosity solution of
\[
	 \inf_{\mu \in \A(x)} \Big\{ \overline L_x^\mu u(x) + \overline \ell^\mu(x) \Big\} = 0
	\quad \text{in } B,
\]
where the family of kernels $\big(K(x,\cdot,\xi)\big)_{x \in B, \xi \in X}$ satisfies~\ref{K0}--\ref{K2}, the family of measure sets $(\A(x))_{x \in B}$ satisfies~\ref{A1}, and the running cost $\ell$ satisfies~\ref{L1}, with this exponent $\alpha$, then
\[
	[u]_{C^{\sigma + \alpha}(B_{1/4})} \leq C \big(1 + A_0\big)
	\Big( \| u\|_{C^{\sigma + \alpha'}(B)} +
	K_0\, [u]_{C^{\sigma + \alpha}(B)} +
	\| u \|_{C^\alpha(\R^N)} + C_0 \Big),
\]
where $A_0 \geq 0$ and $C_0>0$ are the constants in~\ref{A1} and~\ref{L1}, respectively.
\end{prop}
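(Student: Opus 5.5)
We freeze the kernel at a point and apply Proposition~\ref{prop:apriori-estimate} to a localized, truncated version of $u$. This mirrors the corresponding step of~\cite{Serra15}. Fix $x_0\in B_{1/4}$ and let $\eta\in C_c^\infty(B_{1/2}(x_0))$ be a cutoff with $\eta\equiv1$ on $B_{1/4}(x_0)$, $0\le\eta\le1$. Set $w:=\eta u$. Since $u\in C^{\sigma+\alpha}(\overline B)\cap C^\alpha_b(\R^N)$, we have $w\in C^{\sigma+\alpha'}_b(\R^N)$ and $\|w\|_{C^{\sigma+\alpha'}(\R^N)}\le C\|u\|_{C^{\sigma+\alpha'}(B)}$.

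Because $u$ is $C^{\sigma+\alpha}$ in $B$, the equation holds pointwise there. For $x\in B_{1/8}(x_0)$ and $\mu\in\A(x)$ we write
\[
\overline L^\mu_x u(x)=L_{\overline K^\mu(x_0,\cdot)}w(x)+L_{\overline K^\mu(x_0,\cdot)}\big((1-\eta)u\big)(x)+\big(L_{\overline K^\mu(x,\cdot)}-L_{\overline K^\mu(x_0,\cdot)}\big)u(x).
\]
Thus $w$ solves, in $B_{1/8}(x_0)$, a frozen equation of the form~\eqref{eq:effHJ-no-x} with kernel $K(x_0,\cdot,\xi)$, the same sets $\A(x)$, and the new cost
\[
\tilde\ell(x,\xi):=\ell(x,\xi)+L_{K(x_0,\cdot,\xi)}\big((1-\eta)u\big)(x)+\big(L_{K(x,\cdot,\xi)}-L_{K(x_0,\cdot,\xi)}\big)u(x).
\]
This is linear in $\xi$ before averaging, so the infimum structure is preserved. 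After translating and rescaling $B_{1/8}(x_0)$ to $B$ (kernel homogeneity is not needed; the class $\mathcal L_0$ and \ref{K2}, \ref{A1} and \ref{L1} rescale with harmless constants), Proposition~\ref{prop:apriori-estimate} gives
\[
[w]_{C^{\sigma+\alpha}(B_{1/16}(x_0))}\le C(1+A_0)\big(\|w\|_{C^{\sigma+\alpha'}(\R^N)}+\tilde C_0\big),
\]
where $\tilde C_0$ is the \ref{L1} constant of $\tilde\ell$. A covering argument then yields the estimate on $B_{1/4}$.

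It remains to bound $\tilde C_0$, which has two parts. The first is $\sup_x\|\tilde\ell(x,\cdot)\|_*$; this follows from \ref{K2}. Indeed, $(1-\eta)u$ vanishes near $x$, so its contribution is controlled by $K^*\|u\|_{L^\infty}$. The last term is bounded by $C\|u\|_{C^{\sigma+\alpha'}(B)}$ plus a tail bounded by $C\|u\|_{L^\infty}$, again via \ref{K2} applied separately at $x$ and $x_0$. The second part is $\sup_\mu[\overline{\tilde\ell}^\mu]_{C^\alpha}$. The tail term $x\mapsto L_{\overline K^\mu(x_0)}((1-\eta)u)(x)$ is $C^\alpha$ with norm $\le C\|u\|_{C^\alpha(\R^N)}$: the kernel is evaluated away from its singularity, and we use differences in $x$ of $u(x+z)$ together with the $C^\alpha$ bound on $u$.

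The main obstacle is the $C^\alpha$ seminorm of the frozen-coefficient error $g^\mu(x):=L_{\overline K^\mu(x)}u(x)-L_{\overline K^\mu(x_0)}u(x)$. For $x,x'$ near $x_0$ we split
\[
g^\mu(x)-g^\mu(x')=\tfrac12\!\int\!\big(\delta^2u(x,z)-\delta^2u(x',z)\big)\big(\overline K^\mu(x,z)-\overline K^\mu(x_0,z)\big)dz+\tfrac12\!\int\!\delta^2u(x',z)\big(\overline K^\mu(x,z)-\overline K^\mu(x',z)\big)dz .
\]
We treat both integrals by dyadic decomposition into annuli $B_{2r}\setminus B_r$ and apply \ref{K1}. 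On small annuli the second integral uses $|\delta^2u(x',z)|\le [u]_{C^{\sigma+\alpha}(B)}|z|^{\sigma+\alpha}$ (modulo the linear/quadratic correction when $\sigma+\alpha>1$, handled by symmetry). This gives $\sum_r K_0|x-x'|^\alpha r^{\alpha}\lesssim K_0[u]_{C^{\sigma+\alpha}}|x-x'|^\alpha$. On large annuli we use instead the bound $\|u\|_{L^\infty}r^{-\sigma}$, which sums since $\sigma>0$. For the first integral we split at $r=|x-x'|$. Below that scale we bound each second difference by $[u]_{C^{\sigma+\alpha}}r^{\sigma+\alpha}$ and the kernel difference by $K_0|x-x_0|^\alpha r^{-\sigma}$. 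Above it we use $|\delta^2u(x,z)-\delta^2u(x',z)|\lesssim [u]_{C^{\sigma+\alpha}}|x-x'|^{\alpha}r^{\sigma}$, a second-difference estimate of the kind in the Appendix, with \ref{K1} giving $K_0|x-x_0|^\alpha r^{-\sigma}$ and $|x-x_0|\le1$. The dyadic sums must converge and produce exactly $|x-x'|^\alpha$. This is where the restriction $\alpha<\bar\alpha$ and $\sigma+\alpha\notin\N$ enter, and the bookkeeping of the regimes $\sigma+\alpha<1$, $\in(1,2)$, $>2$ is the delicate point. The outcome is $[\overline{g}^\mu]_{C^\alpha}\le C\big(K_0[u]_{C^{\sigma+\alpha}(B)}+\|u\|_{C^\alpha(\R^N)}\big)$, which inserted in the Proposition~\ref{prop:apriori-estimate} bound gives the claim.
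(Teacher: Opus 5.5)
Your architecture is the paper's. You freeze the kernel, move the freezing error and the cutoff remainder into a new running cost, check \ref{L1} for it through the same two-term split (your two integrals are the paper's $T_2^\mu$ and $T_1^\mu$) with \ref{K1} on dyadic annuli, and apply Proposition~\ref{prop:apriori-estimate}. Cutting off and freezing at $x_0$ in one step and then covering is a legitimate reorganization. It even simplifies the cutoff remainder, which only involves the frozen kernel and so needs no \ref{K1}; the paper's remainder $S$ instead carries $K(x,z,\xi)$.

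The estimate you single out as the main obstacle, however, does not close as written. For $d<|z|\le 1/2$, with $d=|x-x'|$, you use $|\delta^2u(x,z)-\delta^2u(x',z)|\lesssim[u]_{C^{\sigma+\alpha}(B)}\,d^\alpha|z|^\sigma$, while \ref{K1} gives $\int_{B_{2r}\setminus B_r}|\overline K^\mu(x,z)-\overline K^\mu(x_0,z)|\,dz\le K_0r^{-\sigma}$. Each dyadic annulus then contributes $K_0[u]_{C^{\sigma+\alpha}(B)}d^\alpha$. There are about $\log_2(1/d)$ such annuli, so you only get $d^\alpha\log(1/d)$, which is not a $C^\alpha$ bound.

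You need the sharper increment bound of Lemma~\ref{lem:delta2-increments}: $d^{\sigma+\alpha}$ if $\sigma+\alpha<1$, and $d\,|z|^{\sigma+\alpha-1}$ if $\sigma+\alpha>1$. This improves yours by at least the factor $(d/|z|)^{\min\{\sigma,1-\alpha\}}$. The dyadic sum then becomes geometric, is dominated by $r\sim d$, and yields $CK_0[u]_{C^{\sigma+\alpha}(B)}d^\alpha$. For $|z|\ge 1/2$, $[u]_{C^{\sigma+\alpha}(B)}$ does not see $x\pm z$, so there you must use $4\|u\|_{C^\alpha(\R^N)}d^\alpha$ instead.

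Consequently, convergence of these sums rests only on $\sigma>0$ and $0<\alpha<1$, plus $\sigma<2$ near the origin when $\nu=2$. The condition $\alpha<\bar\alpha$ plays no role here; it enters solely through the Liouville theorem inside Proposition~\ref{prop:apriori-estimate}.

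Likewise, when $\sigma+\alpha>2$, symmetry removes only the linear Taylor term. Then $|\delta^2u(x',z)|$ is $O(|z|^2)$, not $O([u]_{C^{\sigma+\alpha}(B)}|z|^{\sigma+\alpha})$. In the $T_1$-type integral, bound it by $\|D^2u\|_{L^\infty(B)}|z|^2\le\|u\|_{C^{\sigma+\alpha'}(B)}|z|^2$. For $|z|\le d$, use the increment bound $[u]_{C^{\sigma+\alpha}(B)}d^{\sigma+\alpha-2}|z|^2$ rather than bounding each second difference separately.
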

{The dependence of $C$ on $K_0$ is nondecreasing; the factor $K_0$ in front of $[u]_{C^{\sigma+\alpha}(B)}$ is kept explicit, as it is made small by rescaling in the proof of Theorem~\ref{thm1}.}

\begin{proof}
We first prove the estimate under the additional assumption $u \in C^{\sigma + \alpha}_b(\R^N)$, which allows us to apply Proposition~\ref{prop:apriori-estimate} directly and to isolate the difficulties that are new to the $x$-dependent problem. We then remove this assumption by a localization argument, as in~\cite{Serra15}.

\smallskip
\emph{Step 1 (Freezing the kernel).}
Assume that $u \in C^{\sigma + \alpha}_b(\R^N)$. Freezing the kernel at the origin, we rewrite the equation as
\begin{equation}\label{eq:HJ-frozen}
\inf_{\mu \in \mathcal A(x)} \big\{ \overline L_0^\mu u(x) + \overline{\tilde\ell\,}^\mu(x) \big\} = 0 \quad \text{in } B,
\end{equation}
where
\[
	\overline L^\mu_0 u(x) := \frac12 \int_{\R^N} \delta^2 u(x,z)\, \overline K^\mu (0, z)\, dz
\]
is the operator with kernel frozen at the origin, and $\tilde \ell = \ell + T$, where
\begin{align*}
	T(x,\xi) & := \frac12 \int_{\R^N} \delta^2u(x,z) \big(K(x,z,\xi) - K(0,z,\xi)\big)\, dz.
\end{align*}
Indeed, $\overline L_x^\mu u = \overline L_0^\mu u + \overline T^\mu$, so that the difference between the original and the frozen operator is collected into the new running cost $\tilde\ell$.

\smallskip
\emph{Step 2 (The new running cost $\tilde\ell$).}
To apply Proposition~\ref{prop:apriori-estimate} to~\eqref{eq:HJ-frozen}, we need $\tilde\ell$ to satisfy~\ref{L1}, that is, to control both $\|\tilde\ell(x,\cdot)\|_*$ and the H\"older seminorm of $\overline{\tilde\ell\,}^\mu$.

\smallskip
We start with the $\|\cdot\|_*$ norm. By~\ref{K2}, applied at $x$ and at $0$, we have $T(x,\cdot) \in E$ and
\[
	\|T(x,\cdot)\|_*
	\ \leq \ \frac12 \int_{\R^N} |\delta^2u(x,z)| \, \big(\|K(x,z,\cdot)\|_* + \|K(0,z,\cdot)\|_*\big)\, dz
	\ \leq \ K^* \int_{\R^N} |\delta^2u(x,z)|\, |z|^{-N-\sigma}\, dz.
\]
By Lemma~\ref{lem:near-far}, with $\alpha'$ in place of $\alpha$ and $r = 1/2$, we conclude that
\begin{equation}\label{eq:T-star}
	\sup_{x \in B_{1/2}} \|T(x,\cdot)\|_*
	\ \leq \ C \big(\|u\|_{C^{\sigma+\alpha'}(B)} + \|u\|_{L^\infty(\R^N)}\big),
\end{equation}
and therefore, by~\ref{L1},
\[
\sup_{x \in B_{1/2}} \|\tilde\ell(x,\cdot)\|_* \le C\big(\|u\|_{C^{\sigma+\alpha'}(B)} + \|u\|_{L^\infty(\R^N)}\big) + C_0.
\]

\smallskip
We continue with the H\"older seminorm, assuming for simplicity that $\sigma+\alpha<2$; the case $\sigma+\alpha>2$ is identical, using the corresponding bounds of Lemmas~\ref{lem:near-far} and~\ref{lem:delta2-increments}.
Since $\overline{\tilde\ell\,}^\mu = \overline T^\mu + \overline\ell^\mu$ and $\overline\ell^\mu$ is controlled by~\ref{L1}, it suffices to estimate $\overline T^\mu$. For $x, x' \in B_{1/2}$, we split
\begin{align*}
\overline T^\mu(x) - \overline T^\mu(x') & = T_1^\mu(x, x') + T_2^\mu(x, x'), \quad \text{where} \\
T_1^\mu(x, x') & := \frac12 \int_{\R^N} \delta^2u(x', z) \left( \overline K^\mu(x, z) - \overline K^\mu(x',z)\right) dz, \\
T_2^\mu (x, x') & := \frac12 \int_{\R^N}  \left( \delta^2u(x, z) - \delta^2u(x', z)\right) \left(  \overline K^\mu(x, z) - \overline K^\mu(0, z) \right) dz.
\end{align*}
The H\"older continuity of $T_1^\mu$ comes from that of the kernel, and that of $T_2^\mu$ from that of $u$. We estimate the two terms separately.

For the first term we use~\ref{K1}, and Lemma~\ref{lem:near-far} with $\alpha'$ in place of $\alpha$ and $r = 1/2$:
\begin{align*}
	|T_1^\mu(x, x')|
	\leq & \ \frac12 \int_{\R^N} |\delta^2u(x', z)| \left| \overline K^\mu(x, z) - \overline K^\mu(x',z)\right| dz \\
	\leq & \ \|u\|_{C^{\sigma+\alpha'}(B)} \int_{B_{1/2}}  |z|^{\sigma+\alpha'}
		\left|\overline K^\mu(x, z) - \overline K^\mu(x', z)\right| dz  \\
	& \quad
	 + 2\, \| u \|_{L^\infty(\R^N)}  \int_{B_{1/2}^c} \left|\overline K^\mu(x, z) - \overline K^\mu(x', z)\right| dz \\
	=: & \ \|u\|_{C^{\sigma+\alpha'}(B)}\, I_1(x, x') + 2\,\| u \|_{L^\infty(\R^N)}\, I_2(x, x').
\end{align*}
For $I_1(x,x')$, we decompose $B_{1/2}$ into dyadic annuli $B_{2^{-k}} \setminus B_{2^{-k-1}}$, $k \geq 1$, and apply~\ref{K1} with $r = 2^{-k-1}$ on each:
\begin{align*}
	I_1(x,x') = & \sum_{k \geq 1} \int_{B_{2^{-k}} \setminus B_{2^{-k-1}}}
		|z|^{\sigma + \alpha'} \left|\overline K^\mu(x, z) - \overline K^\mu(x', z)\right| dz \\
	\leq & \sum_{k \geq 1} (2^{-k})^{\sigma + \alpha'} \int_{B_{2^{-k}} \setminus B_{2^{-k-1}}}
		 \left|\overline K^\mu(x, z) - \overline K^\mu(x', z)\right| dz \\
	\leq & \sum_{k \geq 1} (2^{-k})^{\sigma + \alpha'} K_0 \frac{|x - x'|^\alpha}{(2^{-k-1})^\sigma}
	=  \ 2^\sigma K_0 |x - x'|^\alpha \sum_{k \geq 1} 2^{-k\alpha'} \ = \ C K_0 |x - x'|^\alpha,
\end{align*}
the series converging since $\alpha' > 0$.
Similarly, for $I_2(x,x')$ we decompose $B_{1/2}^c$ into dyadic annuli $B_{2^k} \setminus B_{2^{k-1}}$, $k \geq 0$, and apply~\ref{K1} with $r = 2^{k-1}$:
\begin{align*}
I_2(x,x') = & \sum_{k \geq 0} \int_{B_{2^k} \setminus B_{2^{k-1}}} \left|\overline K^\mu(x, z) - \overline K^\mu(x', z)\right| dz \\
\leq & \sum_{k \geq 0} K_0 \frac{|x - x'|^\alpha}{(2^{k-1})^\sigma} \ = \ 2^\sigma K_0 |x - x'|^\alpha \sum_{k \geq 0} 2^{-k\sigma} \ = \ C K_0 |x - x'|^\alpha,
\end{align*}
the series converging since $\sigma > 0$.
Combining the bounds on $I_1(x,x')$ and $I_2(x,x')$, we obtain
\begin{equation}\label{eq:est-T1}
	|T_1^\mu(x, x')| \leq C K_0 \Big(\|u\|_{C^{\sigma + \alpha'}(B)} + \| u \|_{L^\infty(\R^N)}\Big)|x - x'|^\alpha.
\end{equation}

\smallskip
We now turn to $T_2^\mu$, where the roles are reversed: the kernel difference is merely bounded, and the H\"older continuity in $x$ comes from $u$. We write the computation for $\nu = 1$; the case $\nu = 0$ is analogous, using the corresponding bounds of Lemma~\ref{lem:delta2-increments}.

Let $x, x' \in B_{1/2}$ with $d := |x-x'| \leq 1/2$; for $d > 1/2$, the bound below follows directly from the same estimates applied to $\overline T^\mu(x)$ and $\overline T^\mu(x')$ separately. By Lemma~\ref{lem:delta2-increments},
\begin{align*}
	|T_2^\mu(x,x')|
	\leq & \ C [u]_{C^{\sigma+\alpha}(B)} \int_{B_d} |z|^{\sigma+\alpha} \left|\overline K^\mu(x,z) - \overline K^\mu(0,z)\right| dz \\
	& + C [u]_{C^{\sigma+\alpha}(B)}\, d \int_{B_{1/2} \setminus B_d} |z|^{\sigma+\alpha-1} \left|\overline K^\mu(x,z) - \overline K^\mu(0,z)\right| dz \\
	& + 2\, \| u \|_{C^\alpha(\R^N)}\, d^\alpha \int_{B_{1/2}^c} \left|\overline K^\mu(x,z) - \overline K^\mu(0,z)\right| dz.
\end{align*}
By~\ref{K1} at the pair $(x,0)$,
\[
	\int_{B_{2r}\setminus B_r} |\overline K^\mu(x,z) - \overline K^\mu(0,z)|\, dz \leq K_0\, r^{-\sigma}\quad  \text{for all } r>0.
\]
Summing over dyadic annuli, as for $I_1$ and $I_2$, the three integrals above are bounded by $C K_0\, d^\alpha$, $C K_0\, d^{\alpha-1}$ and $C K_0$, respectively, using $\alpha>0$, $\alpha<1$ and $\sigma>0$. Therefore,
\begin{equation}\label{eq:est-T2}
	|T_2^\mu(x,x')|
	 \leq  C K_0 \Big( [u]_{C^{\sigma+\alpha}(B)} + \|u\|_{C^\alpha(\R^N)}\Big) |x - x'|^\alpha.
\end{equation}

\smallskip
Combining~\eqref{eq:est-T1} and~\eqref{eq:est-T2}, and using $\|u\|_{L^\infty(\R^N)} \le \|u\|_{C^\alpha(\R^N)}$, we obtain
\begin{equation}\label{eq:est-Tmu}
	\sup_{\mu \in \PP(X)} \left[ \overline T^\mu \right]_{C^{\alpha}(B_{1/2})}
	\leq C \big( K_0\, [u]_{C^{\sigma+\alpha}(B)} + \|u\|_{C^{\sigma+\alpha'}(B)} + \|u\|_{C^\alpha(\R^N)} \big).
\end{equation}
Together with~\eqref{eq:T-star} and~\ref{L1}, this shows that $\tilde\ell = T + \ell$ satisfies~\ref{L1} in $B_{1/2}$, with constant
\begin{equation}\label{eq:ell-tilde}
	\tilde C_0 \ := \ C \big( K_0\, [u]_{C^{\sigma+\alpha}(B)} + \|u\|_{C^{\sigma+\alpha'}(B)} + \|u\|_{C^\alpha(\R^N)} \big) + C_0.
\end{equation}

\smallskip
\emph{Step 3 (Conclusion in the global case).}
Equation~\eqref{eq:HJ-frozen} satisfies, in $B_{1/2}$, all the hypotheses of Proposition~\ref{prop:apriori-estimate}, with the running cost $\ell$ replaced by $\tilde\ell$ and the constant $C_0$ replaced by $\tilde C_0$. Proposition~\ref{prop:apriori-estimate} therefore yields
\begin{equation}\label{eq:apriori-x-dep-raw}
	[u]_{C^{\sigma+\alpha}(B_{1/4})} \leq C(1+A_0)
	\big(\|u\|_{C^{\sigma+\alpha'}(\R^N)} + K_0\, [u]_{C^{\sigma+\alpha}(B)} + \|u\|_{C^\alpha(\R^N)} + C_0 \big).
\end{equation}

\smallskip
\emph{Step 4 (Localization).}
We now remove the assumption $u \in C^{\sigma+\alpha}_b(\R^N)$, and only assume, as in the statement, that $u \in C^{\sigma+\alpha}(\overline B) \cap C^\alpha_b(\R^N)$. We argue as in~\cite{Serra15}. Fix a cut-off $\eta \in C_c^\infty(B)$ with $0\leq\eta\leq1$ and $\eta \equiv 1$ on $B_{3/4}$, and set $u_\eta := \eta u$. Then $u_\eta \in C^{\sigma+\alpha}_b(\R^N)$, and
\begin{equation}\label{eq:cutoff-reg}
	\|u_\eta\|_{C^{\sigma+\alpha'}(\R^N)} \leq  C \|u \|_{C^{\sigma+\alpha'}(B)}, \qquad
	[u_\eta]_{C^{\sigma+\alpha}(B)} \leq [u]_{C^{\sigma+\alpha}(B)} + C\|u \|_{C^{\sigma+\alpha'}(B)}.
\end{equation}

We write $u = u_\eta + w$, with $w := (1-\eta)u$, and note that $u_\eta$ solves the original equation in $B_{1/2}$, with running cost $\ell_\eta := \ell + S$, where
\[
	S(x,\xi) := \frac12 \int_{\R^N} \delta^2 w(x,z)\, K(x,z,\xi)\, dz, \qquad x \in B_{1/2}.
\]

To estimate its $\|\cdot\|_*$ norm, we note that, for $x \in B_{1/2}$, $\delta^2 w(x,z)$ vanishes for $|z| < 1/4$ and is bounded by $2\|u\|_{L^\infty(\R^N)}$ otherwise. Thus, arguing as for~\eqref{eq:T-star}, we get
\[
\sup_{x\in B_{1/2}}\| S (x,\cdot)\|_* \leq C\|u\|_{L^\infty(\R^N)}.
\]
For the H\"older seminorm of $\overline S^\mu$, let $x, x' \in B_{1/2}$ and observe that $w(x) = w(x') = 0$, so that, for $|z|\geq 1/4$,
\[
	\big|\delta^2w(x,z)-\delta^2w(x',z)\big| \le 2\,[w]_{C^\alpha(\R^N)}\,|x-x'|^\alpha \le C\,\|u\|_{C^\alpha(\R^N)}\,|x-x'|^\alpha,
	\qquad
	\big|\delta^2w(x',z)\big| \le 2\,\|u\|_{L^\infty(\R^N)},
\]
while both expressions vanish for $|z| < 1/4$. Therefore, using the upper bound in~\ref{K0} for the first term, and~\ref{K1} on dyadic annuli, as for $I_2$, for the second one, we get
\begin{eqnarray*}
	\left|\overline S^\mu(x) - \overline S^\mu(x')\right|
	& \le &  \frac12 \int_{\R^N} \left|\delta^2w(x,z)-\delta^2w(x',z)\right| \overline K^\mu(x,z)\, dz  \\
	& & + \frac12 \int_{\R^N} \left|\delta^2w(x',z)\right| \left|\overline K^\mu(x,z)-\overline K^\mu(x',z)\right| dz \\
	& \le & C\,\|u\|_{C^\alpha(\R^N)}\,|x-x'|^\alpha \int_{|z|\ge 1/4} \Lambda |z|^{-N-\sigma}\, dz  \\
	 && + \|u\|_{L^\infty(\R^N)} \int_{|z|\ge 1/4} \left|\overline K^\mu(x,z)-\overline K^\mu(x',z)\right| dz \\
	& \le & C\,\|u\|_{C^\alpha(\R^N)}\,|x-x'|^\alpha.
\end{eqnarray*}

Combining this with the bound on $\|S(x,\cdot)\|_*$ and with~\ref{L1} for $\ell$, we conclude that $\ell_\eta$ satisfies~\ref{L1} in $B_{1/2}$, with constant $C_0 + C\|u\|_{C^\alpha(\R^N)}$. Steps~1--3 then apply to $u_\eta$, and~\eqref{eq:apriori-x-dep-raw} gives
\[
	[u_\eta]_{C^{\sigma+\alpha}(B_{1/4})} \leq C(1+A_0)
	\Big(\|u_\eta\|_{C^{\sigma+\alpha'}(\R^N)} +
	K_0\, [u_\eta]_{C^{\sigma+\alpha}(B)} +
	\|u_\eta\|_{C^\alpha(\R^N)} + C_0 + \|u\|_{C^\alpha(\R^N)} \Big).
\]
Finally, since $u_\eta = u$ in $B_{1/4}$ and $\|u_\eta\|_{C^\alpha(\R^N)} \le C\|u\|_{C^\alpha(\R^N)}$, we obtain, by~\eqref{eq:cutoff-reg},
\[
	[u]_{C^{\sigma + \alpha}(B_{1/4})} \leq C (1 + A_0)
	\Big( \| u\|_{C^{\sigma + \alpha'}(B)} + K_0\, [u]_{C^{\sigma + \alpha}(B)} + \| u \|_{C^\alpha(\R^N)} + C_0 \Big),
\]
which is the estimate of Proposition~\ref{prop:x-dep-estimate}.

\end{proof}

\subsection{The final estimate}
We now prove Theorem~\ref{thm1}, following~\cite[Proof of Theorem~1.1]{Serra15}. We apply Proposition~\ref{prop:x-dep-estimate} on balls $B_\rho(x_0) \subset B$ of varying radius, and collect the resulting bounds into an estimate for an adimensional H\"older seminorm.

For $\Omega\subset\R^N$ and $\alpha>0$ with $\alpha\notin\N$, we define the adimensional H\"older seminorm
\[
	[u]^*_{\alpha;\Omega} := \sup_{x \neq y \in \Omega,\ |l| = k} d_{x,y}^{\alpha}\,\frac{|D^l u(x) - D^l u(y)|}{|x - y|^{\alpha-k}},
	\qquad k := \lfloor\alpha\rfloor, \quad d_{x,y} := \min \{ \mathrm{dist}(x, \partial \Omega), \mathrm{dist}(y, \partial \Omega)\}.
\]
We use the following classical properties, see~\cite[Sections~4.1 and~6.8]{GT77}.

\begin{remark}\label{rmk:adim}
Let $\nu := \lfloor\sigma+\alpha\rfloor$, and let $0<\alpha'<\alpha$ be such that $\nu<\sigma+\alpha'$, so that $\lfloor\sigma+\alpha'\rfloor = \nu$.
\begin{enumerate}[label=\textup{(\roman*)},ref=\textup{(\roman*)}]
\item \label{rmk:adim-seminorm} If $\Omega'\subset\Omega$ and $\mathrm{dist}(\Omega',\partial\Omega)\geq\delta>0$, then
	\[
	[u]_{C^{\sigma+\alpha}(\Omega')} \leq \delta^{-(\sigma+\alpha)}\, [u]^*_{\sigma+\alpha;\Omega},
	\]
	and the same holds with $\alpha'$ in place of $\alpha$.
\item \label{rmk:adim-seminorm2} We have
	\[
	[u]^*_{\sigma+\alpha;\Omega} \leq
	2^{\sigma+\alpha} \sup_{B_{2\rho}(x_0)\subset\Omega} \rho^{\sigma+\alpha}[u]_{C^{\sigma+\alpha}(B_{\rho/4}(x_0))}
	\ + \ 8^{\alpha-\alpha'}\,[u]^*_{\sigma+\alpha';\Omega}.
	\]
\item \label{rmk:adim-interp} For every $\varepsilon>0$, there exists $C_\varepsilon>0$,
	depending only on $\varepsilon$, $N$, $\sigma$, $\alpha$ and $\alpha'$, such that
	\[
	[u]^*_{\sigma+\alpha';\Omega} \leq \varepsilon\, [u]^*_{\sigma+\alpha;\Omega} + C_\varepsilon\, \|u\|_{L^\infty(\Omega)}.
	\]
\end{enumerate}
\end{remark}

\begin{proof}[Proof of Theorem~\ref{thm1}]
We follow closely the lines of~\cite[Theorem~1.1]{Serra15}.
Let $x_0 \in \R^N$ and $\rho \in (0,1)$ be such that $B_{2\rho}(x_0) \subset B$. The rescaled function $\tilde u(\tilde x) := u(x_0+\rho\tilde x)$ solves
\begin{equation}\label{eq:rescaled-eq}
	\inf_{\mu \in \A(x_0+\rho\tilde x)} \Big\{ \overline L^\mu_\rho \tilde u(\tilde x) + \overline\ell^\mu_\rho(\tilde x) \Big\} = 0 \qquad \text{in } B,
\end{equation}
where $\overline L^\mu_\rho := L_{\overline K^\mu_\rho}$, and $\overline K^\mu_\rho$ and $\overline\ell^\mu_\rho$ are the averages of the rescaled kernel and running cost
\[
	K_\rho(\tilde x,\tilde z,\xi) := \rho^{N+\sigma}\, K(x_0+\rho\tilde x, \rho \tilde z, \xi), \qquad
	\ell_\rho(\tilde x,\xi) := \rho^{\sigma}\, \ell(x_0+\rho\tilde x, \xi){ ,}
\]
for $\tilde x \in B$, $\tilde z\in\R^N\setminus\{0\}$ and $\xi\in X$.
We check that~\eqref{eq:rescaled-eq} satisfies the hypotheses of Proposition~\ref{prop:x-dep-estimate}, with constants independent of $x_0$ and $\rho$.

We start with the assumptions on the kernel. It is immediate to see, upon rescaling, that $\overline K^\mu_\rho$ belongs to the same ellipticity class $\mathcal L_0(\sigma,\Lambda_0,\Lambda)$ as $\overline K^\mu$, so that~\ref{K0} is satisfied.

For~\ref{K1}, fix $\tilde x,\tilde x' \in B$ and $r>0$. The points $x_0+\rho\tilde x$ and $x_0+\rho\tilde x'$ lie in $B_\rho(x_0) \subset B$, so that, substituting $z = \rho\tilde z$ and applying~\ref{K1} for $\overline K^\mu$ with radius $\rho r$,
\begin{align*}
	\int_{B_{2r}\setminus B_r} \left|\overline K^\mu_\rho(\tilde x,\tilde z) - \overline K^\mu_\rho(\tilde x',\tilde z)\right| d\tilde z
	= & \ \rho^\sigma \int_{B_{2\rho r}\setminus B_{\rho r}} \left|\overline K^\mu(x_0+\rho\tilde x,z) - \overline K^\mu(x_0+\rho\tilde x',z)\right| dz \\
	\leq & \ \rho^\sigma\, K_0 \frac{(\rho|\tilde x-\tilde x'|)^\alpha}{(\rho r)^\sigma} = K_0 \rho^\alpha \, \frac{|\tilde x-\tilde x'|^\alpha}{r^\sigma}.
\end{align*}
Hence $K_\rho$ satisfies~\ref{K1} with constant $K_0 \rho^\alpha$ in place of $K_0$.

For~\ref{K2}, since $\|\cdot\|_*$ is positively homogeneous,
\[
	\|K_\rho(\tilde x,\tilde z,\cdot)\|_* = \rho^{N+\sigma}\, \|K(x_0+\rho\tilde x, \rho \tilde z, \cdot)\|_* \leq \rho^{N+\sigma}\, K^*\,(\rho|\tilde z|)^{-N-\sigma} = K^* |\tilde z|^{-N-\sigma}.
\]
Moreover, the same change of variables gives $L_{K_\rho(\tilde x,\cdot,\xi)}\tilde u(\tilde x) = \rho^\sigma L_{K(x,\cdot,\xi)}u(x)$, with $x = x_0+\rho\tilde x$, so that the second part of~\ref{K2} for $K_\rho$ follows from the one for $K$.

The family $\{\A(x_0+\rho \tilde x)\}_{\tilde x\in B}$ satisfies~\ref{A1} with constant $A_0 \rho^\alpha$ in place of $A_0$. Indeed, for $\tilde x,\tilde x' \in B$ and $\phi \in E$, applying~\ref{A1} at the points $x_0+\rho\tilde x, x_0+\rho\tilde x' \in B$, we have
\[
	d_H\big(\langle\phi\rangle_{x_0+\rho\tilde x}, \langle\phi\rangle_{x_0+\rho\tilde x'}\big)
	\leq A_0 \|\phi\|_*(\rho|\tilde x-\tilde x'|)^\alpha
	= A_0 \rho^\alpha \, \|\phi\|_* \, |\tilde x-\tilde x'|^\alpha.
\]

Finally, the running cost $\ell_\rho$ satisfies~\ref{L1} with constant $C_0\rho^{\sigma}$ in place of $C_0$.
Indeed, for all $\tilde x \in B$, $\ell_\rho(\tilde x,\cdot) \in E$ with $\|\ell_\rho(\tilde x,\cdot)\|_* = \rho^\sigma\|\ell(x_0+\rho\tilde x,\cdot)\|_*$, and, for all $\tilde x,\tilde x' \in B$ and $\mu \in \A(x_0+\rho\tilde x)$,
\[
	\left|\overline\ell^\mu_\rho(\tilde x) - \overline\ell^\mu_\rho(\tilde x')\right|
	= \rho^\sigma \left|\overline\ell^\mu(x_0+\rho\tilde x)-\overline\ell^\mu(x_0+\rho\tilde x')\right|
	\leq \rho^{\sigma+\alpha}\, [\overline\ell^\mu]_{C^\alpha(B)}\, |\tilde x-\tilde x'|^\alpha,
\]
and $\rho^{\sigma+\alpha} \le \rho^\sigma$.

We may therefore apply Proposition~\ref{prop:x-dep-estimate} to $\tilde u$. Let $\nu := \lfloor\sigma+\alpha\rfloor$, and fix $\alpha' \in (0,\alpha)$ with $\nu<\sigma+\alpha'$. Since $K_0\rho^\alpha \le K_0$, we obtain
\begin{eqnarray*}
	[\tilde u]_{C^{\sigma+\alpha}(B_{1/4})}
	& \leq & C\big(1+A_0\rho^\alpha\big) \Big( \|\tilde u\|_{C^{\sigma+\alpha'}(B)} +
	K_0\rho^\alpha\, [\tilde u]_{C^{\sigma+\alpha}(B)} +
	\|\tilde u\|_{C^\alpha(\R^N)} + C_0\rho^{\sigma} \Big) \\
	& \leq & C\big(1+A_0\rho^\alpha\big) \Big( [\tilde u]_{C^{\sigma+\alpha'}(B)} +
	K_0\rho^\alpha\, [\tilde u]_{C^{\sigma+\alpha}(B)} +
	\|\tilde u\|_{C^\alpha(\R^N)} + C_0\rho^{\sigma} \Big),
\end{eqnarray*}
with $C$ independent of $x_0$ and $\rho$, where we used that $\|\tilde u\|_{C^{\sigma+\alpha'}(B)} \le C\big([\tilde u]_{C^{\sigma+\alpha'}(B)} + \|\tilde u\|_{L^\infty(B)}\big)$, by interpolation on $B$, and absorbed the $L^\infty$ term into $\|\tilde u\|_{C^\alpha(\R^N)}$.

Rescaling back to $u$, with $[\tilde u]_{C^{s}(B_r)} = \rho^{s}[u]_{C^{s}(B_{\rho r}(x_0))}$, we obtain
\begin{eqnarray*}
	\rho^{\sigma+\alpha}[u]_{C^{\sigma+\alpha}(B_{\rho/4}(x_0))}
	& \leq & C(1+A_0\rho^\alpha)\Big(\rho^{\sigma+\alpha'} [u]_{C^{\sigma+\alpha'}(B_\rho(x_0))} +
	K_0\rho^{\sigma+2\alpha}[u]_{C^{\sigma+\alpha}(B_\rho(x_0))} \\
	& & \qquad + \|u\|_{C^\alpha(\R^N)} + C_0\rho^{\sigma} \Big).
\end{eqnarray*}

\smallskip
Since $B_{2\rho}(x_0)\subset B$, every point of $B_\rho(x_0)$ is at distance at least $\rho$ from $\partial B$. Hence, by property~\ref{rmk:adim-seminorm} of Remark~\ref{rmk:adim}, applied with $\Omega = B$ and $\Omega' = B_\rho(x_0)$ to the two seminorms on the right-hand side of the previous estimate, and using $\rho<1$,
\[
	\rho^{\sigma+\alpha}[u]_{C^{\sigma+\alpha}(B_{\rho/4}(x_0))} \leq
	C(1+A_0)\Big([u]^*_{\sigma+\alpha';B} + K_0\, [u]^*_{\sigma+\alpha;B} + \|u\|_{C^\alpha(\R^N)} + C_0 \Big).
\]
This estimate holds for every $(x_0,\rho)$ with $B_{2\rho}(x_0)\subset B$. Taking the supremum and using property~\ref{rmk:adim-seminorm2} of Remark~\ref{rmk:adim}, we obtain
\begin{equation}\label{eq:adim-est}
	[u]^*_{\sigma+\alpha;B} \leq
	C(1+A_0)\Big([u]^*_{\sigma+\alpha';B} + K_0\, [u]^*_{\sigma+\alpha;B} + \|u\|_{C^\alpha(\R^N)} + C_0 \Big),
\end{equation}
 where $C$ depends only on $N$, $\sigma$, $\Lambda_0$, $\Lambda$, $K^*$, $\alpha$, and $\alpha'$ and, nondecreasingly, on $K_0$.

\smallskip
\emph{The case when $K_0$ and $A_0$ are small.}
Assume that $A_0 \le 1$ and $K_0 \le \min\{1, (8C)^{-1}\}$,  where $C$ is the constant in~\eqref{eq:adim-est} computed with $K_0 = 1$, so that it depends only on $N$, $\sigma$, $\Lambda_0$, $\Lambda$, $K^*$, $\alpha$ and $\alpha'$. Applying property~\ref{rmk:adim-interp} of Remark~\ref{rmk:adim} with $\varepsilon := 1/(8C)$, and using $1+A_0 \le 2$, the coefficient of $[u]^*_{\sigma+\alpha;B}$ on the right-hand side of~\eqref{eq:adim-est} is at most $2C(\varepsilon + K_0) \le 1/2$. Since $[u]^*_{\sigma+\alpha;B} \le [u]_{C^{\sigma+\alpha}(B)} < \infty$, this term can be absorbed into the left-hand side, and, using $\|u\|_{L^\infty(B)} \le \|u\|_{C^\alpha(\R^N)}$, we obtain
\[
	[u]^*_{\sigma+\alpha;B} \leq C\big(\|u\|_{C^\alpha(\R^N)} + C_0\big).
\]
Finally, by property~\ref{rmk:adim-seminorm} of Remark~\ref{rmk:adim}, applied with $\Omega = B$ and $\Omega' = B_{1/2}$, and by interpolation on $B_{1/2}$,
\begin{equation}\label{eq:thm1-small}
	\|u\|_{C^{\sigma+\alpha}(B_{1/2})} \leq C\big(2^{\sigma+\alpha}[u]^*_{\sigma+\alpha;B} + \|u\|_{L^\infty(B)}\big) \leq C\big(\|u\|_{C^\alpha(\R^N)} + C_0\big),
\end{equation}
with $C$ depending only on $N$, $\sigma$, $\Lambda_0$, $\Lambda$, $K^*$, $\alpha$ and $\alpha'$.

\smallskip
\emph{Conclusion in the general case.} Let
\[
	\rho_0 := \min\Big\{ {\big(\min\{1,(8C)^{-1}\}/(1+K_0)\big)^{1/\alpha}},\ (1+A_0)^{-1/\alpha},\ 1/2 \Big\},
\]
{with $C$ as above}. For $z \in B_{1/2}$, let $\bar u(\bar x) := u(z + \rho_0\bar x)$. Since $B_{\rho_0}(z) \subset B$, the verification carried out at the beginning of this proof, with $x_0 = z$ and $\rho = \rho_0$, shows that $\bar u$ solves in $B$ an equation of the same form, with the same constants $\Lambda_0$, $\Lambda$, $K^*$, and with $K_0\rho_0^\alpha$, $A_0\rho_0^\alpha$ and $C_0\rho_0^\sigma$ in place of $K_0$, $A_0$ and $C_0$. By the choice of $\rho_0$, we have {$K_0\rho_0^\alpha \le \min\{1,(8C)^{-1}\}$} and $A_0\rho_0^\alpha \le 1$, so that~\eqref{eq:thm1-small} applies to $\bar u$ and gives, since $\|\bar u\|_{C^\alpha(\R^N)} \le \|u\|_{C^\alpha(\R^N)}$ and $C_0\rho_0^\sigma \le C_0$,
\[
	\|\bar u\|_{C^{\sigma+\alpha}(B_{1/2})} \leq C\big(\|u\|_{C^\alpha(\R^N)} + C_0\big).
\]
Scaling back, and covering $B_{1/2}$ by the balls $B_{\rho_0/2}(z)$, $z\in B_{1/2}$, we obtain
\[
	\|u\|_{C^{\sigma+\alpha}(B_{1/2})} \leq C\rho_0^{-(\sigma+\alpha)}\big(\|u\|_{C^\alpha(\R^N)} + C_0\big).
\]
Since $\rho_0$ depends only on $K_0$, $A_0$ and the constants above, this proves Theorem~\ref{thm1}.
\end{proof}

\subsection{Equations with a zeroth-order term}\label{subsec:proper}
We now prove Corollary~\ref{cor:proper}, by reducing it to Theorem~\ref{thm1}.

\begin{proof}[Proof of Corollary~\ref{cor:proper}]
Moving the zeroth-order term into the running cost, we rewrite~\eqref{eq:properHJ-x} as
\[
	\inf_{\mu\in\mathcal A(x)}\Big\{\overline L_x^\mu u(x) + \overline{\tilde\ell\,}^\mu(x)\Big\} = 0 \quad \text{in } B,
\]
where $\overline{\tilde\ell\,}^\mu$ is the average of
\[
	\tilde\ell(x,\xi) := \ell(x,\xi) - u(x)\,\chi_X(\xi), \qquad x\in B,\ \xi\in X.
\]
Indeed, since every $\mu \in \PP(X)$ is a probability measure, $\overline{\tilde\ell\,}^\mu = \overline\ell^\mu - u$. This equation is of the form~\eqref{eq:effHJ-x}, with $\tilde\ell$ in place of $\ell$, and~\ref{K0}--\ref{K2} and~\ref{A1} are unaffected, since they do not involve the running cost.

It remains to check that $\tilde\ell$ satisfies~\ref{L1}. Since $u$ is bounded and continuous, $\tilde\ell \in C_b(B\times X)$, and, since $\chi_X \in E$, $\tilde\ell(x,\cdot) \in E$ for every $x\in B$, with
\[
	\sup_{x\in B}\|\tilde\ell(x,\cdot)\|_* \ \le \ \sup_{x\in B}\|\ell(x,\cdot)\|_* + \|\chi_X\|_*\,\|u\|_{L^\infty(\R^N)}.
\]
Moreover, for all $x,x'\in B$ and $\mu \in \mathcal A(x)$,
\[
	\left|\overline{\tilde\ell\,}^\mu(x) - \overline{\tilde\ell\,}^\mu(x')\right|
	\ \le \ \left|\overline\ell^\mu(x)-\overline\ell^\mu(x')\right| + |u(x)-u(x')|
	\ \le \ \big([\overline\ell^\mu]_{C^\alpha(B)} + [u]_{C^\alpha(\R^N)}\big)|x-x'|^\alpha.
\]
Hence $\tilde\ell$ satisfies~\ref{L1} with constant
\[
	\tilde C_0 := C_0 + \big(1+\|\chi_X\|_*\big)\|u\|_{C^\alpha(\R^N)}
\]
in place of $C_0$. Applying Theorem~\ref{thm1}, we obtain
\[
	\|u\|_{C^{\sigma+\alpha}(B_{1/2})} \ \le \ C\big(\|u\|_{C^\alpha(\R^N)} + \tilde C_0\big)
	\ \le \ C\big(2+\|\chi_X\|_*\big)\big(\|u\|_{C^\alpha(\R^N)} + C_0\big),
\]
which is the estimate of Corollary~\ref{cor:proper}, with $C(2+\|\chi_X\|_*)$ in place of $C$.
\end{proof}

\bigskip
\section{Existence of a classical solution for the effective problem}
\label{sec-existence}

In this section, we first prove the existence of a viscosity solution to~\eqref{eq:HJ-bar} under the following qualitative assumptions, which replace the H\"older regularity in~\ref{K1}, \ref{A1} and~\ref{L1} by mere continuity (Proposition~\ref{prop:existence}). We then prove, under the assumptions of Section~\ref{secAssumptions}, the existence and uniqueness of a classical solution $u \in C^{\sigma+\alpha}_b(\R^N)$ to~\eqref{eq:HJ-bar} (Proposition~\ref{prop:existence-Csigma}).

\begin{enumerate}[start=1,label=\textup{(K\arabic*$'$)},ref=\textup{(K\arabic*$'$)}]
\item \label{K1'} \emph{Continuity of the averaged kernel.}
	There exists a modulus of continuity $m_K:\R_+ \to \R_+$ such that, for all $\mu \in \PP(X)$, $x, x' \in \R^N$ and $r > 0$,
	\[
	\int_{B_{r}^c} \left|\overline K^\mu (x, z) - \overline K^\mu (x', z)\right| dz
	\leq \frac{m_K(|x - x'|)}{r^\sigma}.
	\]
\end{enumerate}

\begin{enumerate}[start=1,label=\textup{(A\arabic*$'$)},ref=\textup{(A\arabic*$'$)}]
\item \label{A1'} \emph{Continuity of $x \mapsto \A(x)$.}
	There exists a modulus of continuity $m_A:\R_+ \to \R_+$ such that, for all $x, x' \in \R^N$ and $\phi \in E$,
	\[
	d_H(\langle \phi \rangle_x, \langle \phi \rangle_{x'}) \leq m_A(|x - x'|)\, \| \phi \|_*.
	\]
\end{enumerate}

\begin{enumerate}[start=1,label=\textup{(L\arabic*$'$)},ref=\textup{(L\arabic*$'$)}]
\item \label{L1'} \emph{Continuity of the running cost.}
	For all $x \in \R^N$, $\ell(x, \cdot) \in E$, with $C^* := \sup_{x \in \R^N} \| \ell(x, \cdot) \|_* < \infty$, and there exists a modulus of continuity $m_\ell:\R_+ \to \R_+$ such that, for all $x,x'\in \R^N$ and $\mu \in \A(x)$,
	\[
	\left|\overline \ell^\mu(x) - \overline \ell^\mu(x')\right| \leq m_\ell(|x - x'|).
	\]
\end{enumerate}
Here, a modulus of continuity is a nondecreasing function $m:[0,\infty)\to[0,\infty)$ with $m(t)\to0$ as $t\to0$.

We start with an existence result for~\eqref{eq:HJ-bar} under these weaker assumptions.
\begin{prop}\label{prop:existence}
Assume that the family of kernels $\big(K(x,\cdot,\xi)\big)_{x \in \R^N, \xi \in X}$ satisfies~\ref{K0}, \ref{K1'} and~\ref{K2}, with $B$ replaced by $\R^N$ in~\ref{K0} and~\ref{K2}, that the family of measure sets $(\A(x))_{x \in \R^N}$ satisfies~\ref{A1'}, and that the running cost $\ell$ satisfies~\ref{L1'}. Then~\eqref{eq:HJ-bar} admits a viscosity solution $u \in C_b^{\gamma}(\R^N)$, for some $\gamma\in(0,1)$ depending only on $N$, $\sigma$, $\Lambda_0$ and $\Lambda$.
\end{prop}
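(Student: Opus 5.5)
The plan is to obtain the solution as a fixed point of a contraction, or equivalently as the limit of an iteration, with equicontinuity supplied by Krylov--Safonov type H\"older estimates. No comparison principle for~\eqref{eq:HJ-bar} is needed along the way.

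\emph{Step 1 (regularized problems).} We regularize the index-set dependence by freezing it piecewise. The cleaner route is a vanishing-viscosity-free approximation. For $\delta>0$, pick a $\delta$-net $\{x_j\}$ of $\R^N$ and a locally finite partition of unity. We then define the approximate operator
\[
F_\delta(x,u) := \inf_{\mu\in\A_\delta(x)}\{\overline L^\mu_x u(x)+\overline\ell^\mu(x)\},
\]
where $\A_\delta(x)$ is chosen from finitely many frozen sets near $x$. Instead of this, we prefer a simpler device that keeps~\eqref{eq:HJ-bar} itself. We use Perron's method: sub- and supersolutions are $\pm\sup_x\sup_{\mu}|\overline\ell^\mu(x)|\le \pm C^*$ (constants, since $\overline L^\mu$ kills constants and $u\pm$ const is proper). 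By~\ref{A1'}, \ref{K1'} and~\ref{L1'}, the Hamiltonian $H(x,u,\phi)=u-\inf_{\mu\in\A(x)}\{L_{\overline K^\mu(x,\cdot)}\phi(x)+\overline\ell^\mu(x)\}$ is continuous in $x$ for smooth test functions, which gives the stability of sup/inf of sub/supersolutions. Perron then gives $\underline u:=\sup\{$subsolutions between the barriers$\}$, whose usc envelope $\underline u^*$ is a subsolution and whose lsc envelope $\underline u_*$ is a supersolution. This is the standard Perron argument of~\cite{BI08}; the bump construction requires only continuity of $H$ and ellipticity~\ref{K0}.

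\emph{Step 2 (closing the gap without comparison).} This is the main obstacle: without comparison we cannot conclude $\underline u^*=\underline u_*$. Our first attempt is to exploit uniform ellipticity directly. Any two functions $w_1$ sub and $w_2$ super satisfy, by subtracting the inf-forms, $w_1-w_2-\MM^+_{\mathcal L_0}(w_1-w_2)\le \text{error}$. The difficulty is that the index sets at different points do not cancel as they do in the doubling argument. So we instead build the solution as a uniform limit of solutions of problems for which comparison does hold. We replace $\A(x)$ by a piecewise-constant-in-$x$ family smoothed in the Hausdorff sense, namely the Hausdorff mollification $\A_\delta$ obtained by taking, for each $\phi$, the convex combination of the inf-values. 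Concretely, we define
\[
H_\delta(x,\cdot):=\sum_j\psi_j(x)\inf_{\mu\in\A(x_j)}\{\cdots\},
\]
which is concave in $\phi$ and Lipschitz in $x$ modulo the kernel dependence. For $H_\delta$ the comparison principle holds by the usual doubling argument with~\ref{K1'}. Hence there is a unique viscosity solution $u_\delta$, bounded by $C^*$.

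\emph{Step 3 (uniform H\"older bound and limit).} Each $u_\delta$ satisfies the Pucci inequalities
\[
\MM^-_{\mathcal L_0}u_\delta-C\le 0\le \MM^+_{\mathcal L_0}u_\delta+C, \qquad C=2C^*.
\]
These hold because every admissible kernel lies in $\mathcal L_0$ and $|u_\delta|,|\overline\ell^\mu|\le C^*$. The Caffarelli--Silvestre Krylov--Safonov estimate~\cite{CS09} then gives $\|u_\delta\|_{C^\gamma(\R^N)}\le C$ with $\gamma=\gamma(N,\sigma,\Lambda_0,\Lambda)$, applied on unit balls and then covered. By Arzel\`a--Ascoli, $u_\delta\to u$ locally uniformly along a subsequence, with $u\in C^\gamma_b$. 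Finally, $H_\delta\to H$ locally uniformly on smooth test functions: the $\delta$-error is controlled by $m_A(\delta)\|\phi\|_*$ via~\ref{A1'}, with $\|\phi\|_*$ bounded through~\ref{K2} and~\ref{L1'}, plus $m_\ell(\delta)$. The stability of viscosity solutions for nonlocal equations~\cite{BI08}, with tails controlled by the uniform bound, then shows that $u$ solves~\eqref{eq:HJ-bar}. The delicate point I expect is checking comparison for $H_\delta$, because the $x$-dependent kernels are only continuous in the averaged sense~\ref{K1'}. If that fails, I would additionally freeze the kernel at $x_j$ in each piece and remove this freezing in the limit using~\ref{K1'}.
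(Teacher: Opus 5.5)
\textbf{There is a genuine gap in Step~2.} You assert that the interpolated problem $H_\delta$ satisfies a comparison principle ``by the usual doubling argument with~\ref{K1'}''. This is not justified, and that argument does not close here.

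Take a doubling point $(\bar x,\bar y)$, with $\varphi=\rho^{-1}|x-y|^2$ and the nonlocal terms split at a radius $\tau$ as in~\cite{BI08}. Every piece of $x$-dependence that multiplies the \emph{singular} part of the operator produces an error equal to a modulus of $|\bar x-\bar y|$ times the operator evaluated on $v^\tau$ at $\bar y$. That operator value is only bounded by $C(\rho^{-1}\tau^{2-\sigma}+\|v\|_\infty\tau^{-\sigma})$. To kill the near-origin term, $\tau$ must be small compared with $\rho^{1/(2-\sigma)}$, while you only know $\rho^{-1}|\bar x-\bar y|^2\to0$. So such errors do not vanish in general.

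Your $H_\delta$ contains two of them:
\begin{itemize}
\item the kernel mismatch, of size $\|v\|_\infty\, m_K(|\bar x-\bar y|)\,\tau^{-\sigma}$ by~\ref{K1'} with $r=\tau$;
\item the weight mismatch $\sum_j(\psi_j(\bar x)-\psi_j(\bar y))\inf_{\mu\in\A(x_j)}\{\cdots\}$ evaluated at $\bar y$, of size $\delta^{-1}|\bar x-\bar y|\,(\rho^{-1}\tau^{2-\sigma}+\|v\|_\infty\tau^{-\sigma})$.
\end{itemize}
Making $x\mapsto\A(x)$ Lipschitz is therefore beside the point. The obstruction is not the modulus in~\ref{A1'}. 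It is the size of $\|\phi\|_*$ when $\phi$ contains the singular operator applied to a merely bounded semicontinuous function. Your fallback of freezing the kernels at $x_j$ removes the first error but not the second.

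\textbf{The missing idea, used in the paper, is to regularize the singular part of the operator rather than the index set.} Replace every kernel by $K_\delta=\eta_\delta\,C_\Lambda|z|^{-(N+\sigma)}\chi_X+(1-\eta_\delta)K$. Here $\eta_\delta$ is a cut-off at a truncation radius $\delta$ and $C_\Lambda=(\Lambda_0+\Lambda)/2$. These kernels stay in $\mathcal L_0(\sigma,\Lambda_0,\Lambda)$ uniformly in $\delta$.
\begin{itemize}
\item Near the origin all averaged kernels now coincide with one translation-invariant kernel. Its operator $\mathcal I$ comes out of the supremum and is handled as in~\cite{BI08}.
\item All dependence on $x$ and $\mu$ is carried by the bounded operators $\overline{\mathcal J}^\mu_{x,\delta}$, which only see $v$ on $B_{\delta/2}^c$.
\item The mismatch errors become $\delta^{-\sigma}\|v\|_\infty\,m_K(|\bar x-\bar y|)$ and $(C\|v\|_\infty\delta^{-\sigma}+C^*)\,m_A(|\bar x-\bar y|)$. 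These are independent of $\tau$ and vanish as $\rho\to0$ with $\delta$ fixed.
\end{itemize}
So comparison holds with the original, merely continuous $\A(x)$, and no interpolation of the index sets is needed. One further point: $\overline J^\mu_\delta$ changes sign, so the maximality inequality must be applied to the nonnegative kernel $\overline K^\mu_\delta$. The leftover far-field term then cancels exactly with the corresponding part of $\mathcal I$.

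With this approximation in place, your Step~3 goes through essentially as you wrote it: barriers $\pm C^*$, Perron, the Pucci bounds, the uniform $C^\gamma$ estimate of~\cite{CS09}, Arzel\`a--Ascoli, and stability.
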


\begin{proof}
\smallskip
\emph{Step 1 (Kernel truncation).}
Let $\eta \in C_c^\infty(B)$ be a radial cut-off function such that $0 \leq \eta \leq 1$, $\eta = 1$ in $B_{1/2}$ and $\eta = 0$ in $B^c$. For $\delta \in (0,1)$, set $\eta_\delta(z) := \eta(z/\delta)$, for all $z \in \R^N$.

We define $K_\delta: \R^N \times (\R^N \setminus \{ 0 \}) \times X \to \R_+$ by
\begin{equation}\label{eq:Keps-convex}
	K_\delta(x,z,\xi) := \eta_\delta(z)\, C_\Lambda|z|^{-(N+\sigma)}\chi_X(\xi) \;+\; (1-\eta_\delta(z))\, K(x,z,\xi),
	\qquad C_\Lambda := \frac{\Lambda_0 + \Lambda}{2}.
\end{equation}
Then $K_\delta(x,z,\xi)$ is a convex combination of the fixed, translation-invariant kernel $C_\Lambda|z|^{-(N+\sigma)}\chi_X(\xi)$, with weight $\eta_\delta(z)$, and of $K(x,z,\xi)$.
It is easy to check that $K_\delta(x,\cdot,\xi)\in \mathcal L_0(\sigma,\Lambda_0,\Lambda)$ and that $K_\delta$ inherits \ref{K0}, \ref{K1'} and \ref{K2} from $K$, with constants that are uniform in $\delta$.

\smallskip
\emph{Step 2 (The approximating problems).}
For $\delta \in (0,1)$, we consider the approximating problem
\begin{equation}\label{eq:HJ-eff-eps}
	u + \sup_{\mu \in \mathcal A(x)} \left\{ -\overline L_{x, \delta}^\mu u(x) - \overline \ell^\mu(x) \right\} = 0
	\quad \text{in } \R^N,
\end{equation}
where $\overline L^\mu_{x,\delta} := L_{\overline K_\delta^\mu(x,\cdot)}$, and $\overline K_\delta^\mu$ is the average of $K_\delta$ against $\mu\in\PP(X)$. In particular,
\[
	\overline K_\delta^\mu(x,z) \ = \
	C_\Lambda |z|^{-(N+\sigma)} \ + \
	(1-\eta_\delta(z)) \left( \overline K^\mu(x,z) - C_\Lambda |z|^{-(N+\sigma)} \right),
\]
so that, in $B_{\delta/2}$, all the averaged kernels coincide with the fixed kernel $C_\Lambda|z|^{-(N+\sigma)}$.

Splitting $\overline K^\mu_\delta$ accordingly, problem~\eqref{eq:HJ-eff-eps} can equivalently be written as
\begin{equation}\label{eq:HJ-eff-eps2}
	u - \mathcal I u(x) +
	\sup_{\mu \in \mathcal A(x)} \Big\{ -\overline {\mathcal J}_{x, \delta}^\mu u(x) - \overline\ell^\mu(x) \Big\} = 0
	\quad \text{in } \R^N,
\end{equation}
where
\begin{eqnarray*}
&&\mathcal I u(x)
		 := \frac{1}{2} \int_{\R^N} \delta^2u(x,z) \,\frac{C_\Lambda}{ |z|^{N + \sigma}}dz, \\
&& \overline {\mathcal J}_{x, \delta}^\mu u(x)
		 := \int_{\R^N} \left( u(x + z) - u(x)\right) \overline J_\delta^\mu(x,z)\,dz, \quad
		\overline J_\delta^\mu(x,z) := (1-\eta_\delta(z))
		\left(\overline K^\mu(x, z)-\frac{C_\Lambda}{|z|^{N+\sigma}} \right),
\end{eqnarray*}
and $\overline J^\mu_\delta$ is the average of
\[
	J_\delta(x,z,\xi) := (1-\eta_\delta(z))\left(K(x,z,\xi) - C_\Lambda |z|^{-(N+\sigma)}\chi_X(\xi)\right).
\]
Since $J_\delta$ vanishes for $|z| < \delta/2$, the integral defining $\overline{\mathcal J}^\mu_{x,\delta}$ converges absolutely. Moreover, by~\ref{K0} and~\ref{K2},
\[
	|J_\delta(x,z,\xi)| \le \Lambda\,|z|^{-(N+\sigma)}, \qquad
	\|J_\delta(x,z,\cdot)\|_* \leq \big(K^* + C_\Lambda\|\chi_X\|_*\big)\,|z|^{-(N+\sigma)}.
\]
In particular, $\overline{\mathcal J}^\mu_{x,\delta}$ is a bounded operator on $L^\infty(\R^N)$, with $|\overline{\mathcal J}^\mu_{x,\delta} v| \le C\delta^{-\sigma}\|v\|_{L^\infty(\R^N)}$ for a constant $C$ depending only on $N$, $\sigma$ and $\Lambda$.

\smallskip
\emph{Step 3 (Comparison principle and existence).}
Equation~\eqref{eq:HJ-eff-eps2} is the sum of the translation-invariant integro-differential operator $\mathcal I$ and of the bounded, $x$-dependent nonlocal operators $\overline{\mathcal J}^\mu_{x,\delta}$. Combining the arguments of Barles and Imbert~\cite{BI08}, for $\mathcal I$, with those of Alvarez and Tourin~\cite{AT96}, for zeroth-order, state-dependent nonlocal operators such as $\overline{\mathcal J}^\mu_{x,\delta}$, and using the continuity assumptions~\ref{K1'}, \ref{A1'} and~\ref{L1'} in $x$, equation~\eqref{eq:HJ-eff-eps} satisfies a comparison principle between discontinuous viscosity sub- and supersolutions.

We provide a sketch of the proof for completeness.
Let $u$ and $v$ be bounded, semicontinuous viscosity sub- and supersolutions to~\eqref{eq:HJ-eff-eps}, in the sense of~\cite{BI08}, and assume by contradiction that
\[
	M := \sup_{\R^N} \{ u - v \} > 0.
\]
Let $\psi \in C^2_b(\R^N)$ be a nonnegative, radially nondecreasing function with $\psi = 0$ in $B$ and $\psi = 2 + R$ in $B_2^c$, where $R:= \| u \|_\infty + \| v \|_\infty + 1$. For $\beta \in (0,1)$, let $\psi_\beta(x) := \psi(\beta x)$ and
\[
    M_\beta := \sup_{\R^N} \{ u - v - \psi_\beta \}.
\]
Then $M_\beta \to M$ as $\beta\to 0$, and we assume from now on that $\beta$ is small enough that $M_\beta \geq M/2$. By scaling, $\mathcal I\psi_\beta(x) = \beta^\sigma\,(\mathcal I\psi)(\beta x)$, so that
\( \sup_{x \in \R^N} |\mathcal I\psi_\beta(x)|  = \ o_\beta(1).\)

Following the classical doubling-of-variables technique, for $\rho \in (0,1)$,  let $\varphi(x,y) := \rho^{-1}|x-y|^2$ and consider
\[
	M_{\rho, \beta} := \sup_{x, y \in \R^N} \left\{ u(x) - v(y) - \varphi(x,y) - \psi_{\beta}(x) \right\} \ \geq \ M_\beta.
\]
Since $u - v$ is bounded and upper semicontinuous, and $\psi_\beta \geq 2+R$ outside $B_{2/\beta}$, this supremum is attained at some $(\bar x, \bar y)$. By standard arguments, as $\rho \to 0$ with $\beta$ fixed,
\begin{equation}\label{rho-conv}
	\rho^{-1}|\bar x - \bar y|^2 \to 0 \qquad \text{and} \qquad M_{\rho, \beta} \to M_\beta.
\end{equation}
By maximality of $(\bar x,\bar y)$, the function $x \mapsto \varphi(x,\bar y) + \psi_\beta(x)$ touches $u$ from above at $\bar x$, and $y \mapsto -\varphi(\bar x,y)$ touches $v$ from below at $\bar y$, globally, up to additive constants{ .}

For $\tau \in (0,\delta/2)$, let
\begin{align*}
	u^{\tau} & := u\, \chi_{B_\tau(\bar x)^c} + \big(u(\bar x) + \varphi(\cdot,\bar y) - \varphi(\bar x,\bar y) + \psi_{\beta} - \psi_\beta(\bar x)\big)\, \chi_{B_\tau(\bar x)}, \\
	v^{\tau} & := v\, \chi_{B_\tau(\bar y)^c} + \big(v(\bar y) - \varphi(\bar x, \cdot) + \varphi(\bar x,\bar y)\big)\, \chi_{B_\tau(\bar y)},
\end{align*}
so that $u^\tau(\bar x) = u(\bar x)$ and $v^\tau(\bar y) = v(\bar y)$.
The viscosity inequalities, in the sense of~\cite{BI08}, then give
\begin{align*}
	u(\bar x) - \mathcal I u^{\tau} (\bar x) +
	\sup_{\mu \in \mathcal A(\bar x)}\big\{ -\overline{\mathcal J}_{\bar x, \delta}^\mu u^{\tau}(\bar x) - \overline \ell^\mu(\bar x) \big\} & \leq 0, \\
	v(\bar y) - \mathcal I v^{\tau} (\bar y) +
	\sup_{\mu \in \mathcal A(\bar y)}\big\{ -\overline{\mathcal J}_{\bar y, \delta}^\mu v^{\tau}(\bar y) - \overline \ell^\mu(\bar y) \big\} & \geq 0.
\end{align*}
Subtracting these inequalities, and using that $u(\bar x) - v(\bar y) \geq M_{\rho,\beta} \geq M_\beta$, we obtain
\begin{equation}\label{eq:visc-est}
	\mathcal N + \mathcal T_1 + \mathcal T_2 \ \geq \ M_\beta/2,
\end{equation}
where
\begin{align*}
	\mathcal N & := \mathcal I u^\tau(\bar x) - \mathcal I v^\tau(\bar y),\\
	\mathcal T_1 & :=
	\inf_{\mu \in \mathcal A(\bar x)}\Big\{\overline{\mathcal J}_{\bar x, \delta}^\mu u^\tau(\bar x)
		+ \overline \ell^\mu(\bar x)\Big\} -
	\inf_{\mu \in \mathcal A(\bar x)}\Big\{\overline{\mathcal J}_{\bar y, \delta}^\mu v^\tau (\bar y)
		+ \overline \ell^\mu(\bar y)\Big\}, \\
	\mathcal T_2 & :=
	\inf_{\mu \in \mathcal A(\bar x)}\Big\{\overline{\mathcal J}_{\bar y, \delta}^\mu v^\tau(\bar y)
		+ \overline \ell^\mu(\bar y)\Big\} -
	\inf_{\mu \in \mathcal A(\bar y)}\Big\{\overline{\mathcal J}_{\bar y, \delta}^\mu v^\tau (\bar y)
		+ \overline \ell^\mu(\bar y)\Big\}.
\end{align*}
Since $\overline J_\delta^\mu(x,\cdot)$ vanishes on $B_{\delta/2}$, while $u^\tau = u$ and $v^\tau = v$ outside $B_\tau(\bar x)$ and $B_\tau(\bar y)$, respectively, and $\tau<\delta/2$, we have,
$\overline{\mathcal J}_{\bar x,\delta}^\mu u^\tau(\bar x) = \overline{\mathcal J}_{\bar x,\delta}^\mu u(\bar x),$
$\overline{\mathcal J}_{\bar y,\delta}^\mu v^\tau(\bar y) = \overline{\mathcal J}_{\bar y,\delta}^\mu v(\bar y),$
for every $\mu \in \PP(X)$. In particular, $\mathcal T_1$ and $\mathcal T_2$ do not depend on $\tau$, and we drop the superscript $\tau$ in these two terms.

\smallskip
\emph{Estimate of $\mathcal N$.} We write $\mathcal N = \mathcal N_1 + \mathcal N_2 + \mathcal N_3$, where $\mathcal N_1$, $\mathcal N_2$ and $\mathcal N_3$ denote the contributions of $B_\tau$, $B_{\delta/2}\setminus B_\tau$ and $B_{\delta/2}^c$, respectively, to $\mathcal I u^\tau(\bar x) - \mathcal I v^\tau(\bar y)$. Proceeding as in~\cite{BI08}, we have $\mathcal N_1 \leq \rho^{-1}o_\tau(1)$.

On $B_\tau^c$, since $u^\tau(\bar x) = u(\bar x)$ and $v^\tau(\bar y) = v(\bar y)$, the second differences of $u^\tau$ and $v^\tau$ coincide with those of $u$ and $v$. Moreover, testing the maximality of $(\bar x, \bar y)$ at $(\bar x \pm z, \bar y \pm z)$ and adding, we obtain
\begin{equation}\label{eq:max-ineq}
	\delta^2 u(\bar x,z) - \delta^2 v(\bar y,z) \ \leq \ \delta^2\psi_\beta(\bar x,z) \qquad \text{for every } z \in \R^N,
\end{equation}
so that $\mathcal N_2  { \le} o_\beta(1)$. Hence
\begin{equation}\label{eq:est-N}
	\mathcal N \ \leq \ \rho^{-1} \,o_\tau(1) \ + \ o_\beta(1) \ + \ \mathcal N_3,
	\qquad
	\mathcal N_3 := \frac{C_\Lambda}{2}\int_{B_{\delta/2}^c} \big(\delta^2 u(\bar x,z) - \delta^2 v(\bar y,z)\big)\, |z|^{-(N+\sigma)}\,dz.
\end{equation}
The term $\mathcal N_3$ could be estimated in the same way, but we keep it as it is, since it cancels exactly with a term arising in $\mathcal T_1$, whose kernel $\overline J^\mu_\delta$ changes sign.

\smallskip
\emph{Estimate of $\mathcal T_1$.}
For $k\in\N$, let $\mu_k\in\mathcal A(\bar x)$ be such that
\[
\overline{\mathcal J\;}_{\bar y,\delta}^{\mu_k} v(\bar y) + \overline \ell^{\mu_k}(\bar y)
\ \leq \
\inf_{\mu \in \mathcal A(\bar x)}\Big\{\overline{\mathcal J\;}_{\bar y,\delta}^\mu v(\bar y) + \overline \ell^\mu(\bar y)\Big\} + k^{-1}.
\]
Since $\mu_k \in \mathcal A(\bar x)$, testing the first infimum in the definition of $\mathcal T_1$ at $\mu = \mu_k$ gives
\[
\mathcal T_1 \ \leq \
\overline{\mathcal J\;}_{\bar x,\delta}^{\mu_k} u(\bar x) - \overline{\mathcal J\;}_{\bar y,\delta}^{\mu_k} v(\bar y)
+ \overline \ell^{\mu_k}(\bar x) - \overline \ell^{\mu_k}(\bar y) + k^{-1}.
\]
By symmetry of $\overline J_\delta^{\mu_k}(\bar x,\cdot)$ and $\overline J_\delta^{\mu_k}(\bar y,\cdot)$,
\begin{equation*}
\begin{split}
\overline{\mathcal J\;}_{\bar x,\delta}^{\mu_k} u(\bar x) - \overline{\mathcal J\;}_{\bar y,\delta}^{\mu_k} v(\bar y)
\ = \
& \frac12\int_{\R^N} \big[\delta^2 u(\bar x,z) - \delta^2 v(\bar y,z)\big]\, \overline J_\delta^{\mu_k}(\bar x,z)\,dz \\
\ + \
& \frac12\int_{\R^N} \delta^2 v(\bar y,z) \Big(\overline J_\delta^{\mu_k}(\bar x,z) - \overline J_\delta^{\mu_k}(\bar y,z)\Big)dz,
\end{split}
\end{equation*}
and, since $\overline J_\delta^{\mu_k}(\bar x,\cdot)$ and $\overline J_\delta^{\mu_k}(\bar y,\cdot)$ both vanish on $B_{\delta/2}$, both integrals above are in fact supported on $B_{\delta/2}^c$.

For the first term, we cannot use~\eqref{eq:max-ineq} directly since the kernel $\overline J_\delta^{\mu_k}$ changes sign. Instead, we write $\overline J_\delta^{\mu_k}(\bar x,z) = \overline K^{\mu_k}_\delta(\bar x,z) - C_\Lambda |z|^{-(N+\sigma)}$, where $\overline K^{\mu_k}{ _\delta} \geq 0${ .} Using~\eqref{eq:max-ineq}, we get
\begin{align*}
	\frac12\int_{B_{\delta/2}^c} \big(\delta^2 u(\bar x,z) - \delta^2 v(\bar y,z)\big)\, \overline J_\delta^{\mu_k}(\bar x,z)\,dz
	\ & = \
	\frac12\int_{B_{\delta/2}^c} \big(\delta^2 u(\bar x,z) - \delta^2 v(\bar y,z)\big)\, \overline K_\delta^{\mu_k}(\bar x,z)\,dz  -  \mathcal N_3 \\
	& \leq \
	\frac{\Lambda}{2} \int_{\R^N} |\delta^2 \psi_\beta(\bar x,z)|\, |z|^{-(N+\sigma)}\,dz  -  \mathcal N_3
	\ = \ o_\beta(1)  -  \mathcal N_3.
\end{align*}

For the second term, we use that the fixed kernel cancels in the difference, namely
\[
	\overline J_\delta^{\mu_k}(\bar x,z) - \overline J_\delta^{\mu_k}(\bar y,z) \ = \ (1 - \eta_\delta(z)) \left(\overline K^{\mu_k}(\bar x,z) - \overline K^{\mu_k}(\bar y,z)\right),
\]
together with $0 \leq 1 - \eta_\delta \leq 1$, $|\delta^2 v(\bar y,z)| \leq 4\|v\|_\infty$, and~\ref{K1'} with $r = \delta/2$. In view of~\eqref{rho-conv}, this gives
\begin{align*}
	\frac12\int_{B_{\delta/2}^c} \delta^2 v(\bar y,z) \left(\overline J_\delta^{\mu_k}(\bar x,z) - \overline J_\delta^{\mu_k}(\bar y,z)\right)dz
	& \leq
	2\|v\|_\infty \int_{B_{\delta/2}^c} \left|\overline K^{\mu_k}(\bar x,z) - \overline K^{\mu_k}(\bar y,z)\right|dz \\
	& \leq
	2\|v\|_\infty\, \frac{m_K(|\bar x - \bar y|)}{(\delta/2)^\sigma} \ = \ \delta^{-\sigma} o_\rho(1).
\end{align*}
Finally, \ref{L1'} gives $\overline \ell^{\mu_k}(\bar x) - \overline \ell^{\mu_k}(\bar y) \le m_\ell(|\bar x - \bar y|) = o_\rho(1)$. Combining these estimates, we conclude that
\begin{equation}\label{eq:est-T1-prop}
	\mathcal T_1 \ \leq \ o_\beta(1) \ + \ \delta^{-\sigma}\,o_\rho(1) \ + \ k^{-1} \ - \ \mathcal N_3.
\end{equation}

\smallskip
\emph{Estimate of $\mathcal T_2$.}
Let
\[
	\phi(\xi) \ := \ \int_{\R^N} \big(v(\bar y+z)-v(\bar y)\big)\, J_\delta(\bar y,z,\xi)\,dz \ + \ \ell(\bar y,\xi), \qquad \xi \in X,
\]
so that $\overline\phi^\mu = \overline{\mathcal J}_{\bar y,\delta}^\mu v(\bar y) + \overline\ell^\mu(\bar y)$ for every $\mu \in \PP(X)$. By~\ref{K2} and~\ref{L1'}, $\phi \in E$ and
\[
	\|\phi\|_*
	\ \leq \ \int_{B_{\delta/2}^c} |v(\bar y+z)-v(\bar y)|\, \|J_\delta(\bar y,z,\cdot)\|_*\,dz \ + \ \|\ell(\bar y,\cdot)\|_*
	\ \leq \ C_1 \|v\|_\infty\, \delta^{-\sigma} \ + \ C^*,
\]
for a constant $C_1>0$ depending only on $N$, $\sigma$, $\Lambda$, $K^*$ and $\|\chi_X\|_*$. Furthermore, \ref{A1'} gives
\begin{align*}
	\mathcal T_2
	& = \ \inf \langle \phi \rangle_{\bar x} \ - \ \inf \langle \phi \rangle_{\bar y}
	\ \leq \ d_H\big(\langle \phi \rangle_{\bar x}, \langle \phi \rangle_{\bar y}\big) \ \leq \ \|\phi\|_*\, m_A(|\bar x-\bar y|),
\end{align*}
and therefore, by~\eqref{rho-conv},
\begin{equation}\label{eq:est-T2-prop}
	\mathcal T_2 \ \leq \ \big(C_1\|v\|_\infty\,\delta^{-\sigma}+C^*\big)\, m_A(|\bar x-\bar y|)
	\ = \ \delta^{-\sigma}\,o_\rho(1).
\end{equation}

\smallskip
\emph{Conclusion.} Combining~\eqref{eq:est-N}, \eqref{eq:est-T1-prop} and~\eqref{eq:est-T2-prop} with~\eqref{eq:visc-est}, the terms $\pm \mathcal N_3$ cancel out, and we obtain
\[
	0 \ < \ M_\beta/2\ \leq \ \rho^{-1}\,o_\tau(1) \ + \ o_\beta(1) \ + \ \delta^{-\sigma}\,o_\rho(1) \ + \ k^{-1}.
\]
Letting first $\tau \to 0$, then $k \to \infty$, and then $\rho \to 0$, with $\beta,\delta$ fixed, we get $M_\beta \leq 2\,o_\beta(1)$. Since $M_\beta \geq M/2$, letting $\beta \to 0$ gives $M \leq 0$, a contradiction. This proves the comparison principle between discontinuous viscosity sub- and supersolutions of~\eqref{eq:HJ-eff-eps}.

\smallskip
\emph{Step 4 (Passage to the limit as $\delta \to 0$).}
Since the constants $\pm C^*$ are, respectively, a super- and a subsolution of~\eqref{eq:HJ-eff-eps}, Perron's method, together with the comparison principle, yields a viscosity solution $u^\delta \in C_b(\R^N)$ of~\eqref{eq:HJ-eff-eps}, satisfying $-C^* \leq u^\delta \leq C^*$. Moreover, since $|\overline\ell^\mu| \leq C^*$ and $\overline K^\mu_\delta(x,\cdot) \in \mathcal L_0(\sigma,\Lambda_0,\Lambda)$, the function $u^\delta$ satisfies, in the viscosity sense,
\[
	\MM_{\mathcal L_0}^+ u^\delta \geq -2C^*, \qquad \MM_{\mathcal L_0}^- u^\delta \leq 2C^* \qquad \text{in } \R^N.
\]
By the results of~\cite{CS09}, we conclude that $u^\delta \in C^{\gamma}(\R^N)$, for some $\gamma \in (0,1)$ depending only on $N$, $\sigma$, $\Lambda_0$ and $\Lambda$, with
\[
	-C^* \leq u^\delta \leq C^*, \qquad [u^\delta]_{C^{\gamma}(\R^N)} \leq C,
\]
for a constant $C > 0$ depending only on $N$, $\sigma$, $\Lambda_0$, $\Lambda$ and $C^*$, and in particular independent of $\delta$.

By the uniform bounds just established, and the Arzel\`a--Ascoli theorem, there exist a sequence $\delta_j \to 0$ and $u \in C_b^{\gamma}(\R^N)$ such that $u^{\delta_j} \to u$ locally uniformly in $\R^N$, with
\[
	-C^* \leq u \leq C^*, \qquad [u]_{C^{\gamma}(\R^N)} \leq C,
\]
by lower semicontinuity of the H\"older seminorm under local uniform convergence.

Finally, $u$ is a viscosity solution of~\eqref{eq:HJ-bar}, in view of the stability of viscosity solutions under local uniform convergence, see Barles and Imbert~\cite{BI08}. Note, however, that neither uniqueness for~\eqref{eq:HJ-bar} nor convergence of the whole family $(u^\delta)_\delta$ is obtained at this level of generality.

\end{proof}

With Proposition~\ref{prop:existence} and Corollary~\ref{cor:proper} at hand, we can now prove the existence and uniqueness of a classical solution of~\eqref{eq:HJ-bar}, which in particular gives the existence and uniqueness part of Theorem~\ref{thm:rates-homog}.

\begin{prop}\label{prop:existence-Csigma}
Let $\sigma \in (0,2)$, let $\bar\alpha \in (0,1)$ be given by Theorem~\ref{thm1}, and let $\alpha \in (0,\bar\alpha)$ with $\sigma+\alpha \notin \N$. Assume that the family of kernels $\big(K(x,\cdot,\xi)\big)_{x \in \R^N, \xi \in X}$ satisfies~\ref{K0}--\ref{K2}, the family of measure sets $(\A(x))_{x \in \R^N}$ satisfies~\ref{A1}, and the running cost $\ell$ satisfies~\ref{L1}, with $B$ replaced by $\R^N$. Then~\eqref{eq:HJ-bar} admits a unique classical solution $u \in C^{\sigma+\alpha}_b(\R^N)$, and
\[
	\| u \|_{C^{\sigma+\alpha}(\R^N)} \ \leq \ C,
\]
for a constant $C>0$ depending only on $N$, $\sigma$, $\Lambda_0$, $\Lambda$, $K_0$, $K^*$, $A_0$, $C_0$, $\|\chi_X\|_*$ and $\alpha$.
\end{prop}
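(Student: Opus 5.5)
Existence will come from Proposition~\ref{prop:existence}, regularity from Corollary~\ref{cor:proper}, and uniqueness from a direct maximum-principle argument among classical solutions. The H\"older assumptions \ref{K1}, \ref{A1}, \ref{L1} on all of $\R^N$ imply their qualitative versions \ref{K1'}, \ref{A1'}, \ref{L1'}, with moduli $m(t)=C\min\{t,1\}^\alpha$. For \ref{K1'}, summing \ref{K1} over dyadic annuli outside $B_r$ gives a bound $CK_0|x-x'|^\alpha r^{-\sigma}$; for large $|x-x'|$ one uses instead the crude bound $2\Lambda$ from \ref{K0}. For \ref{L1'}, the sup bound on $\|\ell(x,\cdot)\|_*$ together with $\|\cdot\|_*\ge$ sup-norm control (or simply boundedness of $\ell$) handles large distances. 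Hence Proposition~\ref{prop:existence} yields a viscosity solution $u\in C^\gamma_b(\R^N)$ with $\|u\|_\infty\le C^*$.

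\emph{Regularity.} Corollary~\ref{cor:proper} requires $u\in C^{\sigma+\alpha}(B)\cap C^\alpha_b(\R^N)$ a priori, so two issues arise: $u$ is only $C^\gamma$ with possibly $\gamma<\alpha$, and it is not known a priori to be $C^{\sigma+\alpha}$.

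\emph{Bootstrap.} I would first improve $\gamma$ to $\alpha$. Serra-type $C^{\sigma+\beta}$ estimates for small $\beta\le\gamma$, applied with exponent $\gamma$ in place of $\alpha$, give $u\in C^{\sigma+\gamma}$ locally with uniform bounds. In particular $u$ becomes $C^{\min(1,\sigma+\gamma)}$, hence $C^\alpha$ globally, with bounds depending only on the data and translation invariant.

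\emph{Main obstacle: a priori regularity.} The cleanest route is to rework the approximation. I would apply the estimates to the approximating problems $u^\delta$ of \eqref{eq:HJ-eff-eps} after mollification: regularize $\ell$ and the kernels in $x$, and use that for truncated kernels that are bounded away from the origin the solutions are smooth enough. Then I would apply Corollary~\ref{cor:proper} to translates $u^\delta(x_0+\cdot)$, with constants uniform in $\delta$ and $x_0$, since \ref{K1}, \ref{A1}, \ref{L1} hold uniformly for $K_\delta$. This gives $\|u^\delta\|_{C^{\sigma+\alpha}(\R^N)}\le C$, and passing to the limit by Arzel\`a--Ascoli yields $u\in C^{\sigma+\alpha}_b$ with the same bound. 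An alternative is a difference-quotient/regularization argument (e.g.\ sup/inf convolutions) to justify the a priori hypothesis directly. Once $u\in C^{\sigma+\alpha}_b$, the viscosity equation holds pointwise, so $u$ is a classical solution.

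\emph{Uniqueness among classical solutions.} Let $u_1,u_2\in C^{\sigma+\alpha}_b$ be classical solutions, set $w=u_1-u_2$, and suppose $M=\sup w>0$. Take $x_\beta$ maximizing $w-\psi_\beta$ with a penalization $\psi_\beta$ as in Step~3 of Proposition~\ref{prop:existence}. At $x_\beta$ the two equations share the same index set $\A(x_\beta)$, so no $\A$-mismatch arises. Choosing an $\epsilon$-minimizer $\mu$ for $u_2$ at $x_\beta$ gives
\[
w(x_\beta)\le \overline L^\mu_{x_\beta}w(x_\beta)+\epsilon\le \overline L^\mu_{x_\beta}\psi_\beta(x_\beta)+\epsilon\le o_\beta(1)+\epsilon,
\]
where the middle inequality uses $\delta^2(w-\psi_\beta)(x_\beta,\cdot)\le0$. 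Letting $\epsilon\to0$ and then $\beta\to0$ gives $M\le0$. The same argument with roles swapped gives equality, so $u_1=u_2$.
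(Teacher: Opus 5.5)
Your reduction to Proposition~\ref{prop:existence} and your uniqueness argument are fine; the latter is exactly the paper's Step~3. So is the plan of applying Corollary~\ref{cor:proper} on every ball $B_1(x_0)$ to the approximating solutions $u^\delta$ of~\eqref{eq:HJ-eff-eps}, with constants uniform in $\delta$. The gap is at the step you yourself flag as the main obstacle: you never establish that $u^\delta\in C^{\sigma+\alpha}_b(\R^N)$, which is what makes the a priori estimate applicable, and the mechanism you suggest would not provide it. Mollifying $\ell$ and $K$ in $x$ leaves the $x$-dependence of $\A(x)$ untouched. That dependence is only H\"older, so mollification gains no smoothness. If ``truncated kernels bounded away from the origin'' means removing the singularity, the operator becomes zeroth order and has no smoothing effect, so its solutions are no more regular than the data. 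Your first bootstrap paragraph is circular for the same reason: Theorem~\ref{thm1} is an a priori estimate that presupposes $u\in C^{\sigma+\gamma}(B)$.

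The missing idea is that $K_\delta$ keeps the singular part. Near the origin it equals the fixed translation-invariant kernel $C_\Lambda|z|^{-(N+\sigma)}$, so~\eqref{eq:HJ-eff-eps2} reads $\mathcal I u^\delta=f_\delta$ with $f_\delta:=u^\delta+\sup_{\mu\in\A(x)}\{-\overline{\mathcal J}^\mu_{x,\delta}u^\delta-\overline\ell^\mu\}$. Since $\overline{\mathcal J}^\mu_{x,\delta}$ is a bounded operator (with $\delta$-dependent constants), $f_\delta$ is H\"older of exponent $\min\{\gamma,\alpha\}$:
\begin{itemize}
\item the $x$-dependence of the kernel is handled by \ref{K1};
\item the $x$-dependence of the cost is handled by \ref{L1};
\item the change of index set from $\A(x')$ to $\A(x)$ is handled by \ref{A1}, applied to $\phi(\xi)=\int_{\R^N}(u^\delta(x'+z)-u^\delta(x'))J_\delta(x',z,\xi)\,dz+\ell(x',\xi)\in E$, exactly as in the estimate of $\mathcal T_2$ in the proof of Proposition~\ref{prop:existence}.
\end{itemize}
So \ref{A1} is needed already at the level of the approximations. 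Schauder estimates for the translation-invariant operator $\mathcal I$ then give $u^\delta\in C^{\sigma+\min\{\gamma,\alpha\}}_b$. Finitely many iterations give $u^\delta\in C^{\sigma+\alpha}_b(\R^N)$ with a $\delta$-dependent bound, after which your uniform estimate and passage to the limit go through.

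Your concern that the uniform exponent $\gamma$ may be smaller than $\alpha$ is legitimate, but at that point harmless. Since $\|u^\delta\|_{C^{\sigma+\alpha}(\R^N)}<\infty$, the term $\|u^\delta\|_{C^\alpha(\R^N)}$ on the right of Corollary~\ref{cor:proper} can be absorbed via $\|u^\delta\|_{C^\alpha(\R^N)}\le\varepsilon\|u^\delta\|_{C^{\sigma+\alpha}(\R^N)}+C_\varepsilon\|u^\delta\|_{L^\infty(\R^N)}$. Alternatively, you can run your bootstrap on $u^\delta$ rather than on $u$.
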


\begin{proof}
\emph{Step 1 (Regularity of the approximating solutions).}
The assumptions imply~\ref{K1'}, \ref{A1'} and~\ref{L1'}, so that Proposition~\ref{prop:existence} applies: for every $\delta \in (0,1)$, problem~\eqref{eq:HJ-eff-eps} has a unique viscosity solution $u^\delta \in C^{\gamma}_b(\R^N)$. By~\eqref{eq:HJ-eff-eps2}, $u^\delta$ solves
\[
	\mathcal I u^\delta = f_\delta  \quad \text{in } \R^N, \qquad \text{where} \quad
	f_\delta := u^\delta +
	\sup_{\mu \in \A(x)} \big\{ -\overline{\mathcal J}^\mu_{x,\delta} u^\delta - \overline\ell^\mu \big\}.
\]
Since $\overline{\mathcal J}^\mu_{x,\delta}$ is bounded, \ref{K1}, \ref{A1} and~\ref{L1} give $f_\delta \in C^{\gamma_0}_b(\R^N)$, with $\gamma_0 := \min\{\gamma, \alpha\}$, and the Schauder estimates of~\cite{CS09} give $u^\delta \in C^{\sigma+\gamma_0}_b(\R^N)$. Iterating finitely many times, we obtain $u^\delta \in C^{\sigma+\alpha}_b(\R^N)$, with a bound depending on $\delta$.

\smallskip
\emph{Step 2 (Uniform estimate and passage to the limit).}
Since $u^\delta \in C^{\sigma+\alpha}_b(\R^N)$, Corollary~\ref{cor:proper}, applied on every ball $B_1(x_0)$, gives $\|u^\delta\|_{C^{\sigma+\alpha}(\R^N)} \leq C$, with $C$ independent of $\delta$.
By the Arzel\`a--Ascoli theorem, up to a subsequence, $u^{\delta} \to u$ in $C^{\sigma+\alpha''}_{loc}(\R^N)$ for every $\alpha'' < \alpha$, with $\|u\|_{C^{\sigma+\alpha}(\R^N)} \leq C$. By stability~\cite{BI08}, $u$ is a viscosity solution of~\eqref{eq:HJ-bar} and, since $u \in C^{\sigma+\alpha}_b(\R^N)$, it is a classical solution.

\smallskip
\emph{Step 3 (Uniqueness).}
Let $u, v \in C^{\sigma+\alpha}_b(\R^N)$ be two classical solutions of~\eqref{eq:HJ-bar}, and assume by contradiction that $M := \sup_{\R^N}(u-v) > 0$. With $\psi_\beta$ as in the proof of Proposition~\ref{prop:existence}, for $\beta$ small, $u - v - \psi_\beta$ attains its maximum $M_\beta \ge M/2$ at some $\bar x \in \R^N$. Then $\delta^2(u - v - \psi_\beta)(\bar x, z) \leq 0$ for all $z$, so that, by~\ref{K0} and scaling, $\overline L^\mu_{\bar x}(u-v)(\bar x) \leq \overline L^\mu_{\bar x} \psi_\beta(\bar x) \leq C \beta^\sigma$ for all $\mu \in \A(\bar x)$. Subtracting the equations for $u$ and $v$ at $\bar x$, which involve the same index set $\A(\bar x)$, we get
\[
 M/2 \ \leq \ u(\bar x) - v(\bar x) \ \leq \ \sup_{\mu \in \A(\bar x)} \overline L^\mu_{\bar x}(u-v)(\bar x) \ \leq \ C\beta^\sigma,
\]
a contradiction for $\beta$ small. In particular, since every subsequential limit of $(u^\delta)_\delta$ is a classical solution of~\eqref{eq:HJ-bar}, by Step~2, the whole family converges to $u$.
\end{proof}

\bigskip
\section{Application to rates of convergence in periodic homogenization}
\label{sec-convergence}

This section is devoted to the proof of Theorem~\ref{thm:rates-homog}. Recall from Subsection~\ref{secApplications} that $X = \T^N \times \Theta$ and that $E$ is given by~\eqref{defE}. The proof relies on the $C^{\sigma+\alpha}$ regularity of the solution $u$ of the effective problem~\eqref{eq:HJ-bar}, given by Proposition~\ref{prop:existence-Csigma}.
To apply it, it remains to show that the sets $\A(x)$, defined below as polar cones, satisfy~\ref{A1}. Since $\A(x)$ is only defined implicitly, through the ergodic problem frozen at $x$, this is the main new step of this section, carried out in Lemma~\ref{lem:A-reg}. It is a technical step, which requires a quantitative control of the effective Hamiltonian $\overline H(x,\phi)$ with respect to $x$, uniformly in $\phi\in E$.
The rest of the proof follows the lines of Camilli and Marchi~\cite{CM09} for second-order problems, adapted to the nonlocal setting as in~\cite{RPT23}.

\subsection{The corrector equation and the effective problem}\label{subsec:derivation}

We briefly recall, following the classical two-scale expansion of Lions, Papanicolaou and Varadhan~\cite{LPV86} and its nonlocal adaptation in~\cite{RPT23}, how the effective problem~\eqref{eq:HJ-bar} formally arises from~{\eqref{eq:HJ-eps}}. We seek an expansion of the solution $u^\epsilon$ of~{\eqref{eq:HJ-eps}} of the form
\begin{equation}\label{ansatz}
	u^\epsilon(x) \ = \ u(x) \ + \ \epsilon^\sigma w\Big(\frac{x}{\epsilon}\Big) \ + \ o(\epsilon^\sigma), \qquad \epsilon \to 0,
\end{equation}
for some limiting profile $u \in C_b(\R^N)$ and some $\T^N$-periodic \emph{corrector} $w = w(\cdot\,;x) \in C(\T^N)$, to be determined. By~\ref{H3}, ${ K^\theta(x,y,\cdot)}$ is positively homogeneous of degree $-(N+\sigma)$, so that the change of variables $z = \epsilon z'$ in~{\eqref{eq:Lxy}} gives
\[
	L^\theta_{x,\frac{x}{\epsilon}} \Big( \epsilon^\sigma w\big(\tfrac{\cdot}{\epsilon}\big) \Big)(x) \ = \ L^\theta_{x,\frac{x}{\epsilon}} w\Big( \frac{x}{\epsilon} \Big):
\]
the factor $\epsilon^\sigma$ exactly compensates the order $\sigma$ of $L^\theta_{x,y}$, which is why this exponent appears in~\eqref{ansatz}. Substituting~\eqref{ansatz} into~{\eqref{eq:HJ-eps}} and matching the terms of order $O(1)$, with $x$ frozen and $y = x/\epsilon$ ranging over $\T^N$, we are led to
\[
	u(x) + \sup_{\theta \in \Theta} \Big\{ -L^\theta_{x,y} u(x) - L^\theta_{x,y} w(y) - \ell^\theta(x,y) \Big\} = 0, \qquad y \in \T^N.
\]

This equation couples the slow variable $x$ and the fast variable $y$, and is solved in two steps. First, for fixed $x\in\R^N$ and $\varphi \in C_b^{\sigma+\iota}(\R^N)$, for some $\iota > 0$, we consider the \emph{cell problem}: find the unique constant $c$, called the \emph{ergodic constant}, for which there exists a periodic corrector $w = w(\cdot\,;x,\varphi) \in C(\T^N)$ solving, in the viscosity sense,
\begin{equation}\label{eq:cell-pb}
	\sup_{\theta \in \Theta} \big\{ - L^\theta_{x,y} {w}(y) - \tilde \ell^\theta(y;x,\varphi) \big\}  = c
	\quad \text{in } \T^N,
	\qquad \text{where} \quad
	\tilde \ell^\theta(y;x,\varphi) := \ell^\theta(x,y) + L^\theta_{x,y}\varphi(x).
\end{equation}
The ergodic constant defines the \emph{effective Hamiltonian} $\overline H(x,\varphi) := c$. Second, the equation above, with $\varphi = u$, requires this constant to be $-u(x)$, that is, $u$ must solve the \emph{effective problem}
\begin{equation}\label{eq:eff-pb}
	u(x) + \overline H(x,u) \ = \ 0 \qquad \text{in } \R^N.
\end{equation}
Finally, $\overline H(x,\varphi)$ admits a variational formula in terms of the sets $\A(x)$, given in Theorem~\ref{thm-eff-H} below, which turns~\eqref{eq:eff-pb} into~\eqref{eq:HJ-bar}.

\subsection{Effective Hamiltonian and viscosity Mather measures \texorpdfstring{$\mathcal A(x)$}{A(x)}}\label{subsec:Ax-holder}
Following the viscosity Mather measures approach of Ishii, Mitake and Tran~\cite{IMT17}, as adapted to the nonlocal setting by Rodr\'iguez-Paredes and Topp~\cite{RPT23}, the effective Hamiltonian $\overline H(x,\varphi)$ admits a variational representation. Recall that $X = \T^N\times \Theta$, that $E$ is given by~\eqref{defE}, and that the sets $\mathcal G(x)$ and $\A(x)$ are defined in~\eqref{defG} and~\eqref{defA}.

\begin{thm}[{\cite[Theorem~2.1]{RPT23}}]\label{thm-eff-H}
Assume that $K$ satisfies~\ref{H0}-\ref{H2}, and that $\ell$ satisfies~\ref{H4} and~\ref{H5}. Then, for every $x \in \R^N$ and $\varphi \in C_b^{\sigma+\iota}(\R^N)$, with $\iota>0$,
\begin{equation}\label{eq:eff-H}
	\overline H(x,\varphi) = \sup_{\mu \in \A(x)}
	\left( -\overline L^\mu_x \varphi(x) - \overline \ell^\mu(x) \right),
\end{equation}
where $\overline L^\mu_x := L_{\overline K^\mu(x,\cdot)}$, and $\overline K^\mu$ and $\overline \ell^\mu$ are the averages of $K$ and $\ell$ against $\mu$, as in~\eqref{eq:mean-f-mu}.
\end{thm}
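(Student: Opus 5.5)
This is a duality statement: for fixed $x$ and $\varphi$, the ergodic constant is a support function of the convex cone $\mathcal G(x)$, computed through its polar. I would follow the minimax/Hahn--Banach scheme of Ishii, Mitake and Tran, as in \cite{RPT23}.

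\emph{Setup.} Fix $x\in\R^N$ and $\varphi\in C^{\sigma+\iota}_b(\R^N)$. Set $\phi^\theta(y):=\tilde\ell^\theta(y;x,\varphi)=\ell^\theta(x,y)+L^\theta_{x,y}\varphi(x)$. By \ref{H1}, \ref{H2}, \ref{H4}, \ref{H5} and the regularity of $\varphi$, we have $\phi\in E\subset C(X)$. The averages satisfy $\overline\phi^\mu=\overline\ell^\mu(x)+\overline L^\mu_x\varphi(x)$ by Fubini. The claim is therefore equivalent to
\[
-\overline H(x,\varphi)=\inf_{\mu\in\A(x)}\int_X\phi\,d\mu .
\]
Solvability of the cell problem~\eqref{eq:cell-pb} with a unique constant $c$ is taken from the standard ergodic theory under uniform ellipticity, via discounted approximations and the H\"older estimates of \cite{CS09}.

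\emph{Key identity.} For $a\in\R$, the definition~\eqref{defG} gives $\phi+a\chi_X\in\mathcal G(x)$ if and only if some $w\in C(\T^N)$ satisfies $\sup_\theta\{-L^\theta_{x,y}w-\phi^\theta\}\le a$. By the comparison principle for the cell problem (strong maximum principle on $\T^N$), this holds exactly when $a\ge c=\overline H(x,\varphi)$. Hence
\[
-\overline H(x,\varphi)=\sup\{s\in\R:\ \phi-s\chi_X\in\mathcal G(x)\}.
\]
It remains to show that this supremum equals $\inf_{\mu\in\A(x)}\int\phi\,d\mu$.

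\emph{The easy inequality.} If $\phi-s\chi_X\in\mathcal G(x)$ and $\mu\in\A(x)$, then $\int\phi\,d\mu-s\ge0$, so the supremum is at most the infimum.

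\emph{The reverse inequality.} This is the main obstacle. Let $s_0:=-\overline H(x,\varphi)$ and suppose $s_1>s_0$. Then $\phi-s_1\chi_X\notin\overline{\mathcal G(x)}$, where the closure is taken in $C(X)$. To justify this, I would use stability of the ergodic constant under uniform perturbations of the data, $|c(\psi)-c(\psi')|\le\|\psi-\psi'\|_\infty$, so that membership in $\mathcal G(x)$ is characterized by $c\le0$, a closed condition.

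Next, apply Hahn--Banach in $C(X)$, with $X$ compact, to separate the point $\phi-s_1\chi_X$ from the closed convex cone $\overline{\mathcal G(x)}$. This yields a signed Radon measure $\mu$ with $\int\psi\,d\mu\ge0$ on $\mathcal G(x)$ and $\int(\phi-s_1)\,d\mu<0$. Since $\mathcal G(x)$ contains every nonnegative $\psi\in C(X)$ (take $w=0$), $\mu$ is nonnegative. Since $\pm L^\theta_{x,\cdot}w$ belongs to $\mathcal G(x)$ for smooth periodic $w$, and $-\chi_X\notin\overline{\mathcal G(x)}$, we get $\mu\ne0$, so we may normalize $\mu$ to a probability measure.

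Then $\mu\in\A(x)$ and $\int\phi\,d\mu<s_1$. Letting $s_1\downarrow s_0$ gives the infimum bound, which, after the sign change, is exactly~\eqref{eq:eff-H}. The delicate points are the closedness of the cone and the verification that $\phi\in C(X)$, which uses the continuity of $\theta\mapsto L^\theta_{x,y}\varphi(x)$ via dominated convergence from \ref{H1}.
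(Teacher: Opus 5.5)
Your proposal is correct and is essentially the polar-cone/Hahn--Banach duality argument of Ishii--Mitake--Tran that the paper imports from \cite{RPT23} without reproducing it. That argument rests on the same characterization \eqref{eq:eff-H-inf-char} of $\overline H(x,\varphi)$ as the least constant admitting a subsolution.

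Two cosmetic points. First, $\mu\neq0$ follows at once from $\int_X(\phi-s_1\chi_X)\,d\mu<0$, so the detour through $\pm L^\theta_{x,\cdot}w$ is unnecessary. Second, for closedness you need not solve the cell problem for non-H\"older data: defining $c(\psi)$ by \eqref{eq:eff-H-inf-char} for every $\psi\in C(X)$ gives a $1$-Lipschitz function in the sup norm, with $c\le0$ on $\mathcal G(x)$ and $c(\phi-s_1\chi_X)=s_1-s_0>0$.
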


To prove~\ref{A1}, we need to control the sets $\langle \phi\rangle_x$ for every $\phi \in E$, and not only for the particular data $\tilde\ell(\cdot\,;x,\varphi)$. For this reason, we consider the cell problem~\eqref{eq:cell-pb} with a general datum $\phi \in E$, namely
\begin{equation}\label{eq:cell-pb-phi}
	\sup_{\theta \in \Theta} \big\{ - L^\theta_{x,y} w (y) - \phi(y,\theta) \big\}  = c
	\quad \text{in } \T^N,
\end{equation}
and denote by $\overline H(x,\phi)$ the corresponding ergodic constant. Since the proof of Theorem~\ref{thm-eff-H} only uses that $\tilde\ell(\cdot\,;x,\varphi) \in E$, and not its specific form, we have
\begin{equation}\label{eq:eff-H-inf}
	\overline H(x,\phi)  = \sup_{\mu\in\A(x)}\left(-\overline\phi^\mu\right) =  - \min_{\mu\in\A(x)} \overline\phi^\mu = -\min \langle\phi\rangle_x.
\end{equation}
Here the minimum is attained, since $\A(x)$ is weak-$*$ compact, as a closed subset of $\PP(X)$ with $X$ compact, and $\mu \mapsto \overline\phi^\mu$ is weak-$*$ continuous.

Equivalently, $\overline H(x,\phi)$ is the smallest constant for which~\eqref{eq:cell-pb-phi} admits a subsolution:
\begin{equation}\label{eq:eff-H-inf-char}
	\overline H(x,\phi) \ = \ \inf \Big\{ \lambda \in \R \ : \
	\sup_{\theta \in \Theta} \big\{ -L^\theta_{x,y}w(y) - \phi(y,\theta) \big\} \le \lambda \ \text{in } \T^N \text{ for some } w\in C(\T^N) \Big\},
\end{equation}
where the inequality is understood in the viscosity sense. It is this characterization, rather than~\eqref{eq:eff-H-inf}, that we use in the proof of Lemma~\ref{lem:A-reg}.

The following well-posedness result is classical, see~\cite{RPT23} and~\cite{BCCI14}.

\begin{thm}[Solvability of the cell problem]\label{thm:cell-phi}
Let $\alpha \in (0,\bar\alpha)$, with $\bar\alpha$ given by Theorem~\ref{thm1} and $\sigma+\alpha\notin\N$, and assume that $K$ satisfies~\ref{H0}-\ref{H2}. Then, for every $x \in \R^N$ and $\phi \in E$, there exists a unique constant $c = \overline H(x,\phi) \in \R$ for which~\eqref{eq:cell-pb-phi} admits a periodic viscosity solution $w = w(\cdot\,; x, \phi)$. This solution is unique up to additive constants, belongs to $C^{\sigma+\alpha}(\T^N)$, and satisfies
\[
	 \| w(\cdot\,; x, \phi) -  w(0; x, \phi) \|_{C^{\sigma+\alpha}(\T^N)} \leq C\|\phi\|_*,
\]
for a constant $C > 0$ depending only on $N$, $\sigma$, $\Lambda_0$, $\Lambda$, $K_0$ and $\alpha$.
\end{thm}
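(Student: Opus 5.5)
The standard route is the discounted approximation: first solve the cell problem with a vanishing zeroth-order term, then send the discount to zero. Fix $x\in\R^N$ and $\phi\in E$, and for $\lambda\in(0,1)$ consider the discounted cell problem
\[
	\lambda w_\lambda(y) + \sup_{\theta\in\Theta}\big\{-L^\theta_{x,y}w_\lambda(y) - \phi(y,\theta)\big\} = 0 \quad \text{in } \T^N.
\]
This is a proper nonlocal Bellman equation. Its kernels $K^\theta(x,\cdot,z)$ are $\alpha$-Hölder in $y$ by \ref{H2}, continuous in $\theta$ by \ref{H1}, and uniformly elliptic by \ref{H0}. Comparison therefore holds by the arguments of~\cite{BI08} (the $y$-dependence being handled as in Step~3 of Proposition~\ref{prop:existence}). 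Perron's method then gives a unique periodic solution with $|\lambda w_\lambda|\le \sup_\theta\|\phi(\cdot,\theta)\|_\infty\le\|\phi\|_*$, since the constants $\pm\|\phi\|_*/\lambda$ are super- and subsolutions.

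\emph{Uniform oscillation bound.} Set $\tilde w_\lambda := w_\lambda - w_\lambda(0)$. This function satisfies $\MM^+_{\mathcal L_0}\tilde w_\lambda \ge -2\|\phi\|_*$ and $\MM^-_{\mathcal L_0}\tilde w_\lambda\le 2\|\phi\|_*$. The Krylov--Safonov type Hölder estimates of~\cite{CS09}, applied on the torus, where $\tilde w_\lambda$ is periodic, give a Hölder bound. The usual contradiction/normalization argument then bounds the oscillation: if $\mathrm{osc}\, w_\lambda/\|\phi\|_*$ blew up along a sequence, normalizing by the oscillation would produce a periodic limit with $\MM^-\le 0\le\MM^+$, hence a constant by the strong maximum principle, contradicting unit oscillation. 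This yields $\mathrm{osc}\, w_\lambda\le C\|\phi\|_*$.

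\emph{Upgrading to $C^{\sigma+\alpha}$.} To improve the regularity, I would rewrite the equation in $\inf$-form with index set $\Theta$ (i.e. Dirac masses $\delta_{(y,\theta)}$, an $x$-independent set, so \ref{A1} holds with $A_0=0$). The kernel is $y$-dependent, and the running cost is $\phi - \lambda w_\lambda$. Corollary~\ref{cor:proper}, or Theorem~\ref{thm1} with the zeroth-order term absorbed into the cost, applied on unit balls covering a period cell, gives
\[
	\|\tilde w_\lambda\|_{C^{\sigma+\alpha}(\T^N)}\le C\big(\|\tilde w_\lambda\|_{C^\alpha}+\|\phi\|_*+\|\lambda w_\lambda\|_{C^\alpha}\big)\le C\|\phi\|_*.
\]
Two points need care here. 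Theorem~\ref{thm1} requires the a priori qualitative regularity $u\in C^{\sigma+\alpha}$; I would secure it by the approximation of Proposition~\ref{prop:existence-Csigma} (truncating kernels near the origin and bootstrapping with~\cite{CS09}), or by the regularity theory of~\cite{Serra15}. Also, $[\lambda w_\lambda]_{C^\alpha}=\lambda[\tilde w_\lambda]_{C^\alpha}$ is small, so it is absorbed using interpolation.

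\emph{Passing to the limit and uniqueness.} By Arzelà--Ascoli, along a subsequence $\lambda w_\lambda\to -c$ uniformly (it is nearly constant, since $\lambda\,\mathrm{osc}\, w_\lambda\to0$) and $\tilde w_\lambda\to w$ in $C^{\sigma+\alpha''}$. Stability then gives that $w$ solves~\eqref{eq:cell-pb-phi} with constant $c$, and lower semicontinuity preserves the $C^{\sigma+\alpha}$ bound. Uniqueness of $c$ follows by comparison. If $(w_1,c_1)$ and $(w_2,c_2)$ both solve with $c_1<c_2$, then $w_2$ is a strict subsolution relative to $w_1$; at a maximum point of $w_2-w_1$ on the compact torus, the two solutions being classical, the nonlocal terms have the right sign and yield $c_2\le c_1$. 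Uniqueness of $w$ up to constants follows from the strong maximum principle: at the maximum of $w_1-w_2$ with equal constants, $\MM^-(w_1-w_2)\le0$ holds in the viscosity sense, which forces $w_1-w_2$ to be constant. Finally, the whole family converges, and the constant is $\overline H(x,\phi)$.

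The main obstacle is the $C^{\sigma+\alpha}$ step, specifically ensuring the qualitative regularity required to invoke Theorem~\ref{thm1}, and checking \ref{K1}, \ref{K2} and \ref{L1} with the $y$-dependent kernel and cost in the present framework.
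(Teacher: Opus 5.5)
Your route is the one the paper relies on. The paper calls the result classical, citing \cite{RPT23} and \cite{BCCI14}, and in the proof of Proposition~\ref{prop:approx-cell}(ii),(v) it runs exactly this vanishing-discount construction. It bounds the oscillation with the Harnack inequality of \cite{CS09}, where you use compactness plus the strong maximum principle, which is equally fine. It then upgrades to $C^{\sigma+\alpha}$ with Serra's estimate. Two steps, however, do not work as written.

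\emph{Comparison and Perron's method.} Comparison for the untruncated discounted problem does not follow ``as in Step~3 of Proposition~\ref{prop:existence}''. That argument closes only because, after truncation, the singular part $|z|<\delta/2$ of every kernel is one fixed translation-invariant kernel. The cross term $\int \delta^2 v(\bar y,z)\,(\overline K^\mu(\bar x,z)-\overline K^\mu(\bar y,z))\,dz$ then lives on $|z|\ge\delta/2$ and costs only $\delta^{-\sigma}m_K(|\bar x-\bar y|)$. Without truncation you must control this term for small $|z|$. There a viscosity supersolution gives only the one-sided bound $\delta^2 v(\bar y,z)\ge -2|z|^2/\rho$, while the kernel difference changes sign and is merely $O(|\bar x-\bar y|^\alpha |z|^{-N-\sigma})$. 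So Perron's method is not justified, and the paper itself only ever uses comparison against a regular function (Lemma~\ref{lem:comparison-regular}) for such problems.

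The repair is essentially in your text. Run the whole construction on kernels truncated near $z=0$ as in \eqref{eq:Keps-convex}, with $y$ as state variable. There comparison holds, and the \cite{CS09} bootstrap gives qualitative $C^{\sigma+\alpha}$. The truncated kernels stay in $\mathcal L_0$ and satisfy \ref{K0}--\ref{K2} with the same constants, so your oscillation and Theorem~\ref{thm1} bounds are uniform in the truncation. In the limit you obtain a classical $w_\lambda$, and then a classical corrector. For the upgrade, take $X=\Theta$, $E=C(\Theta)$ and $\A(y)=\{\delta_\theta\}_{\theta\in\Theta}$. The set $\{\delta_{(y,\theta)}\}$ you wrote depends on the state $y$, so \ref{A1} would not hold with $A_0=0$.

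\emph{Uniqueness.} Both uniqueness arguments use the wrong extremum and assume too much.

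\begin{itemize}
\item If $c_1<c_2$, then $w_2$ is a strict \emph{super}solution of the $c_1$-problem. A maximum point of $w_2-w_1$ only yields $c_2\ge c_1$, which is no contradiction; you need a maximum of $w_1-w_2$.
\item At a maximum of $w_1-w_2$, the inequality $\MM^-_{\mathcal L_0}(w_1-w_2)\le 0$ holds automatically and gives nothing. It is $\MM^+_{\mathcal L_0}(w_1-w_2)\ge 0$ there that forces constancy, since it reduces to $\Lambda_0\int \delta^2(w_1-w_2)(y_0,z)|z|^{-N-\sigma}dz\ge 0$ with a nonpositive integrand.
\item More importantly, the statement asserts uniqueness among periodic \emph{viscosity} solutions. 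You cannot assume both solutions are classical, nor derive Pucci inequalities for the difference of two viscosity solutions; that needs the same missing comparison.
\end{itemize}

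Instead, test an arbitrary viscosity solution $w$, with constant $c$, against the classical corrector $w_0$ you constructed, with constant $c_0$. At the maximum and at the minimum of $w-w_0$, the sub- and supersolution inequalities give $c\ge c_0$ and $c\le c_0$. Then, with $c=c_0$, the $\MM^+$ computation at the maximum shows that $w-w_0$ is constant.
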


We now show that the sets $\A(x)$ satisfy~\ref{A1}.

\begin{lema}\label{lem:A-reg}
Let $\alpha \in (0,\bar\alpha)$, with $\bar\alpha$ given by Theorem~\ref{thm1} and $\sigma+\alpha\notin\N$, and assume that $K$ satisfies~\ref{H0}-\ref{H2}. Then the family $(\A(x))_{x \in \R^N}$ given by~\eqref{defA} satisfies~\ref{A1} in $\R^N$: there exists a constant $A_0 > 0$, depending only on $N$, $\sigma$, $\Lambda_0$, $\Lambda$, $K_0$ and $\alpha$, such that, for all $\phi \in E$ and $x, x' \in \R^N$,
\[
	d_H(\langle \phi \rangle_{x}, \langle \phi \rangle_{x'}) \leq A_0 \| \phi \|_* |x - x'|^\alpha.
\]
\end{lema}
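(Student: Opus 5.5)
The plan is to reduce the Hausdorff estimate to a Hölder estimate for the effective Hamiltonian $x\mapsto \overline H(x,\phi)$, uniformly in $\phi\in E$. Then we prove that estimate by plugging the corrector at $x$ into the characterization \eqref{eq:eff-H-inf-char} at $x'$.

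\emph{Step 1 (reduction to endpoints).} We first show that $\A(x)$ is convex and weak-$*$ compact. It is the intersection of $\PP(X)$ with a polar cone, which is defined by the linear inequalities $\int\phi\,d\mu\ge0$, so it is convex and closed. Nonemptiness follows from \eqref{eq:eff-H-inf}. Hence $\langle\phi\rangle_x$ is a compact interval $[\min\langle\phi\rangle_x,\max\langle\phi\rangle_x]$. By \eqref{eq:eff-H-inf},
\[
\min\langle\phi\rangle_x=-\overline H(x,\phi),\qquad \max\langle\phi\rangle_x=-\min\langle-\phi\rangle_x=\overline H(x,-\phi).
\]
For two compact intervals of $\R$, the Hausdorff distance is the maximum of the distances between corresponding endpoints. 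Since $\|-\phi\|_*=\|\phi\|_*$, it therefore suffices to prove that, for all $\phi\in E$ and $x,x'\in\R^N$,
\[
|\overline H(x,\phi)-\overline H(x',\phi)|\le A_0\|\phi\|_*|x-x'|^\alpha .
\]

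\emph{Step 2 (corrector as a test subsolution).} Fix $x,x'$ and $\phi$. Let $w=w(\cdot\,;x,\phi)$ be given by Theorem~\ref{thm:cell-phi}, normalized so that $w(0)=0$. Then $\|w\|_{C^{\sigma+\alpha}(\T^N)}\le C\|\phi\|_*$. Since $w\in C^{\sigma+\alpha}$ with $\sigma+\alpha\notin\N$ and $w$ is bounded, each $L^\theta_{x,y}w(y)$ is defined pointwise. The viscosity equation \eqref{eq:cell-pb-phi} therefore holds classically: for every $y\in\T^N$ and $\theta\in\Theta$, $-L^\theta_{x,y}w(y)-\phi(y,\theta)\le\overline H(x,\phi)$. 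Writing $L^\theta_{x',y}=L^\theta_{x,y}+(L^\theta_{x',y}-L^\theta_{x,y})$, we obtain
\[
-L^\theta_{x',y}w(y)-\phi(y,\theta)\le \overline H(x,\phi)+\tfrac12\int_{\R^N}|\delta^2w(y,z)|\,|K^\theta(x,y,z)-K^\theta(x',y,z)|\,dz .
\]
By \ref{H2}, the kernel difference is at most $K_0|x-x'|^\alpha|z|^{-N-\sigma}$. Splitting the integral into $|z|<1$ and $|z|\ge1$, and using the $C^{\sigma+\alpha}$ bound on $B_1$ together with $|\delta^2w|\le4\|w\|_\infty$ outside, as in Lemma~\ref{lem:near-far}, gives $\int|\delta^2w(y,z)||z|^{-N-\sigma}dz\le C\|w\|_{C^{\sigma+\alpha}}\le C\|\phi\|_*$. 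So $w$ is a classical, hence viscosity, subsolution of the cell problem at $x'$ with constant $\overline H(x,\phi)+CK_0\|\phi\|_*|x-x'|^\alpha$. The characterization \eqref{eq:eff-H-inf-char} then yields $\overline H(x',\phi)\le\overline H(x,\phi)+CK_0\|\phi\|_*|x-x'|^\alpha$. Exchanging $x$ and $x'$ gives the two-sided bound with $A_0=CK_0$, which depends only on $N,\sigma,\Lambda_0,\Lambda,K_0,\alpha$.

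\emph{Main obstacle.} The delicate point is Step 1, namely making sure that $\langle\phi\rangle_x$ is exactly the interval with endpoints $-\overline H(x,\phi)$ and $\overline H(x,-\phi)$. This requires the convexity of $\A(x)$ and the representation \eqref{eq:eff-H-inf} for arbitrary $\phi\in E$, not only for the data $\tilde\ell$. A secondary point is justifying that the viscosity corrector can be evaluated classically. This relies on the uniform $C^{\sigma+\alpha}$ bound of Theorem~\ref{thm:cell-phi}, which is where the restriction $\alpha<\bar\alpha$ and the uniformity of the constant in $x$ enter.
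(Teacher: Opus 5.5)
Your proposal is correct and follows essentially the same argument as the paper. In both, Hölder continuity of $x\mapsto\overline H(x,\phi)$ comes from using the $C^{\sigma+\alpha}$ corrector of Theorem~\ref{thm:cell-phi} at one point as an approximate classical subsolution at the other, via \eqref{eq:eff-H-inf-char}; then $\langle\phi\rangle_x$ is identified with $[-\overline H(x,\phi),\overline H(x,-\phi)]$, and the Hausdorff distance between intervals is computed endpoint by endpoint. The only differences are cosmetic: you order the steps the other way, you test the corrector at $x$ rather than at $x'$, and you justify the convexity and weak-$*$ compactness of $\A(x)$ explicitly.
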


\begin{proof}
We first note that, for every $w \in C_b^{\sigma+\alpha}(\R^N)$, by~\ref{H2} and Lemma~\ref{lem:near-far}, with $r = 1$,
\begin{align*}
	\big|L_{x, y}^\theta w(y) - L_{x', y}^\theta w(y) \big|
	\leq & \ \frac{1}{2} \int_{\R^N} |\delta^2 w(y, z)|\,\big|{ K^\theta(x,y,z)} - { K^\theta(x',y,z)}\big|\,dz \\
	\leq & \ C_1 K_0\, \| w \|_{C^{\sigma + \alpha}(\R^N)} \, |x - x'|^\alpha,
\end{align*}
for a constant $C_1 > 0$ depending only on $N$, $\sigma$ and $\alpha$.

\smallskip
\emph{Step 1 (H\"older continuity of $x \mapsto \overline H(x,\phi)$).}
Fix $\phi \in E$ and, for $x \in \R^N$, let $w(\cdot\,;x,\phi)$ be the solution of~\eqref{eq:cell-pb-phi} with $c = \overline H(x,\phi)$, normalized so that $w(0;x,\phi) = 0$. For $x, x' \in \R^N$, applying the estimate above to $w(\cdot\,;x',\phi)$, and then Theorem~\ref{thm:cell-phi}, we get
\begin{align*}
	\big|L_{x, y}^\theta w(y;x',\phi) - L_{x', y}^\theta w(y;x',\phi) \big|
	\ \leq \ C_1 K_0 \, \| w(\cdot\,;x',\phi) \|_{C^{\sigma+\alpha}(\T^N)} \, |x - x'|^\alpha \\
	\ \leq \ A_0 \, \| \phi \|_* \, |x - x'|^\alpha,
\end{align*}
with $A_0 := C C_1 K_0$, for all $y \in \T^N$ and $\theta \in \Theta$. Since $w(\cdot\,;x',\phi) \in C^{\sigma+\alpha}(\T^N)$ solves~\eqref{eq:cell-pb-phi} at $x'$, with $c = \overline H(x',\phi)$, pointwise, this shows that $w(\cdot\,;x',\phi)$  is a subsolution of the cell problem at $x$, up to an error of order $|x-x'|^\alpha$:
\[
	\sup_{\theta \in \Theta} \big\{ -L_{x, y}^\theta w(y;x',\phi) - \phi(y,\theta) \big\} \ \leq \ \overline H(x', \phi) + A_0\, \| \phi \|_* \, |x - x'|^\alpha \qquad \text{in } \T^N.
\]
Since, in view of \eqref{eq:eff-H-inf-char}, $\overline H(x,\phi)$ is the smallest constant for which the ergodic problem at $x$ admits a subsolution we deduce $\overline H(x,\phi) - \overline H(x',\phi) \leq A_0 \|\phi\|_* |x-x'|^\alpha$, and, exchanging the roles of $x$ and $x'$,
\begin{equation}\label{eq:lemabase-c-holder}
	|\overline H(x, \phi) - \overline H(x', \phi)| \ \leq \ A_0\, \| \phi \|_* \, |x - x'|^\alpha \qquad \text{for all } \phi \in E \text{ and } x, x' \in \R^N.
\end{equation}

\smallskip
\emph{Step 2 (From $\overline H(\cdot,\phi)$ to $\A(x)$).} Since $\A(x)$ is convex and weak-$*$ compact, and $\mu \mapsto \overline\phi^\mu$ is linear and weak-$*$ continuous, $\langle\phi\rangle_x$ is a compact interval. By~\eqref{eq:eff-H-inf}, its endpoints are $\min_{\mu\in\A(x)}\overline\phi^\mu = - \overline H(x,\phi)$ and $\max_{\mu\in\A(x)}\overline\phi^\mu = -\min_{\mu\in\A(x)}\overline{(-\phi)}^\mu = \overline H(x,-\phi)$, that is,
\[
	\langle\phi\rangle_x \ = \ \big[-\overline H(x,\phi),\ \overline H(x,-\phi)\big], \qquad x \in \R^N.
\]
Hence\footnote{For real intervals, the Hausdorff distance is given explicitly by
\[
	d_H\big([a_1,b_1],[a_2,b_2]\big) \ = \ \max\big(|a_1-a_2|,\,|b_1-b_2|\big).
\]}
\[
	d_H(\langle\phi\rangle_{x},\langle\phi\rangle_{x'}) \ = \ \max\Big(|\overline H(x,\phi)-\overline H(x',\phi)|,\ |\overline H(x,-\phi)-\overline H(x',-\phi)|\Big).
\]
Applying~\eqref{eq:lemabase-c-holder} to $\phi$ and to $-\phi$, and using $\|-\phi\|_*=\|\phi\|_*$, both terms on the right-hand side are bounded by $A_0\|\phi\|_*|x-x'|^\alpha$, which proves the lemma.
\end{proof}

\subsection{Approximate correctors and the effective Hamiltonian}\label{subsec:properties}
In the study of the cell problem~\eqref{eq:cell-pb}, it is convenient to consider, for $\lambda> 0$, the equation
\begin{equation}\label{eq:dp}
	\lambda w_\lambda + \sup_{\theta \in \Theta} \big\{ -L^\theta_{x,y} w_\lambda(y) - \tilde\ell^\theta(y;x,\varphi) \big\} = 0\quad\text{in }\T^N,
\end{equation}
with $\tilde\ell^\theta(y;x,\varphi)$ as in~\eqref{eq:cell-pb}. We refer to~\eqref{eq:dp} as the \emph{approximate cell problem}, as is customary in homogenization~\cite{RPT23,ACS14}, or as the \emph{discounted cell problem}, as in ergodic theory and optimal control~\cite{IMT17}. We rely on the following comparison principle, which is standard in the theory of viscosity solutions.

\begin{lema}[Comparison with a regular competitor]\label{lem:comparison-regular}
Assume that $K$ satisfies~\ref{H0} and~\ref{H1}, and let $x \in \R^N$, $\varphi \in C_b^{\sigma+\iota}(\R^N)$, with $\iota>0$, and $\lambda>0$. Let $u \in C(\T^N)$ be a viscosity subsolution of~\eqref{eq:dp}, and let $v \in C^{\sigma+\gamma}(\T^N)$, for some $\gamma>0$, be a classical supersolution of~\eqref{eq:dp}. Then $u \le v$ in $\T^N$. The same holds with the roles of sub- and supersolution exchanged.
\end{lema}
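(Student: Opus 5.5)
The plan is to argue by contradiction at a maximum point of $u-v$, using the regularity of $v$ to evaluate the nonlocal operators on $v$ pointwise. Since $\T^N$ is compact and $u-v$ is continuous (periodic), $M:=\max_{\T^N}(u-v)$ is attained at some $\bar y$. Suppose $M>0$. The function $v+M$ is $C^{\sigma+\gamma}$ and touches $u$ from above at $\bar y$ globally: $u\le v+M$ everywhere and equality holds at $\bar y$. I will therefore use $\psi:=v+M$ directly as a test function in the definition of viscosity subsolution, in the sense of~\cite{BI08}. The regularity $C^{\sigma+\gamma}$ near $\bar y$ together with boundedness suffices to define $L^\theta_{x,y}\psi(\bar y)$, as recalled in Section~\ref{secAssumptions}, so no truncation into inner and outer parts is needed. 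If one insists on $C^2$ test functions, I would instead approximate $v$ by mollifications $v_\eta\to v$ in $C^{\sigma+\gamma/2}$, for which the operators converge uniformly; this yields the same inequality in the limit.

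Granting this, the subsolution inequality at $\bar y$ reads
\[
\lambda u(\bar y)+\sup_{\theta\in\Theta}\big\{-L^\theta_{x,\bar y}(v+M)(\bar y)-\tilde\ell^\theta(\bar y;x,\varphi)\big\}\le 0 .
\]
Since $L^\theta_{x,\bar y}$ annihilates constants, we have $L^\theta_{x,\bar y}(v+M)=L^\theta_{x,\bar y}v$. Meanwhile, the classical supersolution property of $v$ at $\bar y$ gives
\[
\lambda v(\bar y)+\sup_\theta\{-L^\theta_{x,\bar y}v(\bar y)-\tilde\ell^\theta(\bar y;x,\varphi)\}\ge0 .
\]
Subtracting the two inequalities, the supremum terms are identical and cancel, leaving $\lambda(u(\bar y)-v(\bar y))\le0$, that is, $\lambda M\le 0$. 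This contradicts $M>0$ because $\lambda>0$. Notice that the kernel terms cancel exactly since both functions are evaluated at the same point with the same frozen $x$; hence~\ref{H1} is only needed to make sense of the supremum and $\tilde\ell^\theta$ as a continuous Hamiltonian in~\cite{BI08}.

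For the reversed roles, the argument is symmetric. Let $u\in C^{\sigma+\gamma}$ be a classical subsolution and $v$ a viscosity supersolution, and suppose $M=\max(u-v)>0$ at $\bar y$. Then $u-M$ touches $v$ from below at $\bar y$. Testing the supersolution inequality with $u-M$ and subtracting the classical subsolution inequality again gives $\lambda M\le0$.

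The main point requiring care is the legitimacy of using a $C^{\sigma+\gamma}$ function, rather than a $C^2$ one, as a global test function. I would justify it through the equivalent definition in~\cite{BI08}, where the test function only needs to be regular in a neighbourhood, with $u$ itself used outside. Here we may take $u$ outside $B_\tau(\bar y)$ and $v+M$ inside. The integral splits into the part over $B_\tau$, which involves $\delta^2 v$ and is $O(\tau^{\gamma})$ by the $C^{\sigma+\gamma}$ bound, and the outer part, where $\delta^2u(\bar y,z)\le\delta^2 v(\bar y,z)$ by maximality. Monotonicity in this outer part gives $-L u^\tau\ge -L v - o_\tau(1)$ uniformly in $\theta$ by~\ref{H0}, and letting $\tau\to0$ recovers the inequality above.
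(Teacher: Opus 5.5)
The paper does not prove this lemma; it only says it is standard. So the comparison is with what a complete argument needs. Your core mechanism is right: at a maximum point of $u-v$ both inequalities are evaluated at the same point with the same frozen $x$, so the kernel and cost terms cancel, and $\lambda>0$ gives the contradiction.

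The gap is the step you flagged yourself: using $v+M\in C^{\sigma+\gamma}$ as a test function at $\bar y$. Your justification does not work. In the localized definition of~\cite{BI08}, the test function must still be $C^2$ on the neighbourhood $B_\tau(\bar y)$. Your splitting argument only reproves the standard equivalence between the global and localized formulations; it does not enlarge the class of test functions. When $\sigma+\gamma<2$, $u$ need not admit any $C^2$ function touching it from above at $\bar y$, and then the definition of viscosity subsolution says nothing at $\bar y$.

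Your mollification fallback, as phrased (``yields the same inequality in the limit''), runs into the same problem. The function $v_\eta+M$ does not touch $u$ at $\bar y$. To recover an inequality at $\bar y$ you would have to penalize, obtain touching points $y_\eta\to\bar y$, and pass to the limit in $L^\theta_{x,y_\eta}$ and $\tilde\ell^\theta(y_\eta;x,\varphi)$. That requires continuity in $y$ of the kernel and data, uniformly in $\theta$. This is not contained in~\ref{H0}--\ref{H1}; it would follow from~\ref{H2}, \ref{H5} and Lemma~\ref{lem:Lxy-holder-y}.

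The repair is to not return to $\bar y$ at all.
\begin{itemize}
\item Let $v_\eta$ be periodic mollifications of $v$, so that $v_\eta\to v$ in $C^{\sigma+\gamma'}(\T^N)$ for some $\gamma'\in(0,\gamma)$ with $\sigma+\gamma'\notin\N$.
\item Let $M_\eta:=\max_{\T^N}(u-v_\eta)$, attained at $y_\eta$. Then the smooth function $v_\eta+M_\eta$ touches $u$ from above at $y_\eta$, so the viscosity inequality for $u$ holds at $y_\eta$.
\item The classical supersolution inequality for $v$ holds at every point, in particular at $y_\eta$.
\end{itemize}
Subtracting the two, and using $L^\theta_{x,y_\eta}(v_\eta+M_\eta)=L^\theta_{x,y_\eta}v_\eta$, you get
\[
\lambda\big(u(y_\eta)-v(y_\eta)\big)\ \le\ \sup_{\theta\in\Theta}\big|L^\theta_{x,y_\eta}(v_\eta-v)(y_\eta)\big|\ \le\ C\,\|v_\eta-v\|_{C^{\sigma+\gamma'}(\T^N)},
\]
by the upper bound in~\ref{H0} and Lemma~\ref{lem:near-far}. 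On the other hand,
\[
u(y_\eta)-v(y_\eta)\ \ge\ M_\eta-\|v_\eta-v\|_\infty\ \ge\ M-2\|v_\eta-v\|_\infty .
\]
Letting $\eta\to0$ gives $\lambda M\le 0$, the desired contradiction.

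Because both inequalities are evaluated at the same point $y_\eta$, no regularity of $K$ or $\tilde\ell$ in $y$ is needed. The exchanged case is identical: mollify the classical subsolution $u$, and take $y_\eta$ a maximum point of $u_\eta-v$.
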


We start with two technical lemmas on the regularity of $L^\theta_{x,y}\varphi(x)$, in the fast variable $y$ and in the slow variable $x$.

\begin{lema}[H\"older continuity of $L^\theta_{x,\cdot}\varphi(x)$ in $y$]\label{lem:Lxy-holder-y}
Assume that $K$ satisfies \ref{H0} and~\ref{H2} with $\alpha\in (0,1)$, and let $\varphi \in C_b^{\sigma+\iota}(\R^N)$, with $\iota\in(0,1)$ and $\sigma+\iota\notin\{1,2\}$.
Then, for all $x \in \R^N$ and $\theta \in \Theta$,
\[
	\big[L^\theta_{x,\cdot}\varphi(x)\big]_{C^\alpha(\T^N)} \ \le \ C K_0\,\|\varphi\|_{C^{\sigma+\iota}(\R^N)},
\]
for a constant $C > 0$ depending only on $N$, $\sigma$ and $\iota$.
\end{lema}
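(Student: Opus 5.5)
The plan is to write the operator in second-difference form and let the H\"older regularity in $y$ come entirely from the kernel. For $y,y'\in\T^N$, symmetry of the kernel gives
\[
	L^\theta_{x,y}\varphi(x) - L^\theta_{x,y'}\varphi(x) \ = \ \frac12\int_{\R^N} \delta^2\varphi(x,z)\,\big(K^\theta(x,y,z)-K^\theta(x,y',z)\big)\,dz .
\]
Here $x$ is the evaluation point of $\varphi$ and is the same on both sides. Only the kernel changes, so no increments of $\varphi$ in the base point are needed. By~\ref{H2}, which gives H\"older continuity jointly in $(x,y)$ and hence in $y$ alone, we have pointwise in $z$
\[
	|K^\theta(x,y,z)-K^\theta(x,y',z)| \le K_0\,|y-y'|^\alpha\,|z|^{-(N+\sigma)} .
\]
Distances in $\T^N$ are computed with the flat metric, and periodicity of $K$ in $y$ makes this consistent.

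It then remains to check that
\[
	\int_{\R^N}|\delta^2\varphi(x,z)|\,|z|^{-(N+\sigma)}\,dz \le C\|\varphi\|_{C^{\sigma+\iota}(\R^N)} ,
\]
with $C$ depending only on $N,\sigma,\iota$. I will split into $|z|<1$ and $|z|\ge1$. On $|z|\ge1$ I use $|\delta^2\varphi|\le4\|\varphi\|_\infty$ and integrability of $|z|^{-N-\sigma}$ at infinity. On $|z|<1$ I use the bound $|\delta^2\varphi(x,z)|\le C\|\varphi\|_{C^{\sigma+\iota}}|z|^{\min(\sigma+\iota,2)}$. This is exactly Lemma~\ref{lem:near-far} with $r=1$, used as in the proof of Lemma~\ref{lem:A-reg}, or it follows from the appendix estimates on second differences.

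The bound on $\delta^2\varphi$ needs a case split according to $\sigma+\iota$.
\begin{itemize}
\item If $\sigma+\iota<1$, it is H\"older continuity of $\varphi$.
\item If $1<\sigma+\iota<2$, the symmetric difference cancels the gradient, giving $|\delta^2\varphi|\le 2[\nabla\varphi]_{C^{\sigma+\iota-1}}|z|^{\sigma+\iota}$.
\item If $\sigma+\iota>2$, $\varphi$ is $C^2$ and $|\delta^2\varphi|\le\|D^2\varphi\|_\infty|z|^2$.
\end{itemize}
The exclusion $\sigma+\iota\notin\{1,2\}$ ensures that these H\"older spaces are the standard ones.

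In each case the exponent exceeds $\sigma$. Indeed $\sigma+\iota>\sigma$, and $2>\sigma$ since $\sigma<2$, so $|z|^{\min(\sigma+\iota,2)-N-\sigma}$ is integrable near $0$.

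Combining these bounds gives
\[
	|L^\theta_{x,y}\varphi(x)-L^\theta_{x,y'}\varphi(x)|\le CK_0\|\varphi\|_{C^{\sigma+\iota}}|y-y'|^\alpha .
\]
There is no real obstacle; the only care needed is the case analysis for $\delta^2\varphi$ near the origin and the uniformity of the constant in $x$ and $\theta$. That uniformity holds because~\ref{H2} is uniform in $\theta$ and $z$.
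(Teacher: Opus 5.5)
Your proposal is correct and follows essentially the same route as the paper: write the difference in second-difference form so that only the kernel varies in $y$, apply \ref{H2} pointwise in $z$, and bound $\int_{\R^N}|\delta^2\varphi(x,z)|\,|z|^{-(N+\sigma)}\,dz$ using Lemma~\ref{lem:near-far} with $r=1$ on the near field and $4\|\varphi\|_{L^\infty(\R^N)}$ on the far field. Your explicit treatment of the case $\sigma+\iota>2$ is slightly more careful than the paper's displayed computation, which only writes out the case $\sigma+\iota<2$; in that case the near-field integrand is $|z|^{2-N-\sigma}$ rather than $|z|^{\iota-N}$.
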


\begin{proof}
Let $x \in \R^N$, $\theta \in \Theta$ and $y,y' \in \T^N$. By~\ref{H2}, and Lemma~\ref{lem:near-far}, with $\iota$ in place of $\alpha$ and $r = 1$, we have
\begin{eqnarray*}
	\big|L^\theta_{x,y}\varphi(x) - L^\theta_{x,y'}\varphi(x)\big|
	 & \le & \frac12\int_{\R^N} |\delta^2\varphi(x,z)|\,\big| K^\theta(x,y,z) - K^\theta(x,y',z)\big|\,dz \\
	& \le & \frac{K_0}{2}\,|y-y'|^\alpha \int_{\R^N} |\delta^2\varphi(x,z)|\,|z|^{-(N+\sigma)}\,dz \\
	& \le & \frac{K_0}{2}\,|y-y'|^\alpha \Big( 2\|\varphi\|_{C^{\sigma+\iota}(\R^N)} \int_{B} |z|^{\iota - N}\,dz \\
	& & \qquad\qquad\qquad + \ 4\|\varphi\|_{L^\infty(\R^N)} \int_{B^c} |z|^{-(N+\sigma)}\,dz \Big) \\
	& \le & C\,K_0\,\|\varphi\|_{C^{\sigma+\iota}(\R^N)}\,|y-y'|^\alpha,
\end{eqnarray*}
with $C$ depending only on $N$, $\sigma$ and $\iota$, since $\iota>0$ and $\sigma>0$. This proves the claim.
\end{proof}

\begin{lema}[H\"older continuity of $L^\theta_{\cdot,y}\varphi(\cdot)$ in $x$]\label{lem:Lxy-holder-x}
Assume $K$ satisfies ~\ref{H0} and \ref{H2} with $\alpha\in(0,1)$, and let $\varphi \in C^{\sigma+\iota}_b(\R^N)$,
with $\iota\in(0,1)$ and $\sigma+\iota\notin\{1,2\}$. Then, for all $x\in\R^N,\ y\in\T^N,\ \theta\in\Theta$,
\[
	\big| L^\theta_{x,y}\varphi(x) \big| \ \le \ C\,\|\varphi\|_{C^{\sigma+\iota}(\R^N)} \qquad\]
and, for all $x,x' \in \R^N$, $y \in \T^N$, $\theta \in \Theta$,
\[
	\left| L^\theta_{x,y}\varphi(x) - L^\theta_{x',y}\varphi(x') \right|
	 \ \le \ C \, \|\varphi\|_{C^{\sigma+\iota}(\R^N)} \, |x-x'|^{\min(\iota,\alpha)}.
\]
for a constant $C > 0$ depending only on $N,\sigma,\alpha,\Lambda,K_0$ and $\iota$.
\end{lema}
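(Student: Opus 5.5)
The bound $|L^\theta_{x,y}\varphi(x)|\le C\|\varphi\|_{C^{\sigma+\iota}(\R^N)}$ follows from the symmetric form $L^\theta_{x,y}\varphi(x)=\frac12\int\delta^2\varphi(x,z)K^\theta(x,y,z)\,dz$. We split the integral into $B$ and $B^c$ as in the proof of Lemma~\ref{lem:Lxy-holder-y}. On $B$ we use $|\delta^2\varphi(x,z)|\le C\|\varphi\|_{C^{\sigma+\iota}}|z|^{\sigma+\iota}$, which is Lemma~\ref{lem:near-far}. On $B^c$ we use $|\delta^2\varphi|\le4\|\varphi\|_\infty$. In both regions the kernel is bounded by the upper bound $\Lambda|z|^{-N-\sigma}$ in \ref{H0}, and both integrals converge because $\iota,\sigma>0$.

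For the H\"older estimate we may assume $d:=|x-x'|\le 1$, since otherwise the first bound applied at $x$ and at $x'$ suffices. We split the difference as
\[
L^\theta_{x,y}\varphi(x)-L^\theta_{x',y}\varphi(x')=\tfrac12\int \delta^2\varphi(x,z)\big(K^\theta(x,y,z)-K^\theta(x',y,z)\big)dz+\tfrac12\int\big(\delta^2\varphi(x,z)-\delta^2\varphi(x',z)\big)K^\theta(x',y,z)\,dz .
\]
The first term is controlled by \ref{H2}, which gives $|K^\theta(x,y,z)-K^\theta(x',y,z)|\le K_0 d^\alpha|z|^{-N-\sigma}$. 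Combined with the near/far bound from the first part, it is at most $CK_0\|\varphi\|_{C^{\sigma+\iota}}d^\alpha$.

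The second term is the main step. It has the same structure as $T_2^\mu$ in the proof of Proposition~\ref{prop:x-dep-estimate}, with $\iota$ in place of $\alpha$ and the bounded kernel $\Lambda|z|^{-N-\sigma}$ in place of the kernel difference. We split $\R^N$ into $B_d$, $B_1\setminus B_d$ and $B_1^c$, and use the second-difference increment bounds of Lemma~\ref{lem:delta2-increments}:
\begin{itemize}
\item on $B_d$, $|\delta^2\varphi(x,z)|+|\delta^2\varphi(x',z)|\le C\|\varphi\|_{C^{\sigma+\iota}}|z|^{\sigma+\iota}$, so the contribution is $\le Cd^{\iota}$;
\item on $B_1\setminus B_d$, $|\delta^2\varphi(x,z)-\delta^2\varphi(x',z)|\le C\|\varphi\|_{C^{\sigma+\iota}}\,d\,|z|^{\sigma+\iota-1}$ when $\nu=\lfloor\sigma+\iota\rfloor=1$. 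Integrating against $|z|^{-N-\sigma}$ gives $Cd\cdot d^{\iota-1}=Cd^\iota$, using $\iota<1$.
\item on $B_1^c$, the increment is bounded by $4\|\varphi\|_{C^{\sigma+\iota}}\min(1,d)$, so the contribution is $\le Cd\le Cd^\iota$.
\end{itemize}

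The case distinction $\nu\in\{0,1,2\}$ is where care is needed; the hypothesis $\sigma+\iota\notin\{1,2\}$ is what makes these increment bounds valid. For $\nu=0$ the increment is bounded directly by $C\|\varphi\|_{C^{\sigma+\iota}}\min(d^{\sigma+\iota},|z|^{\sigma+\iota})$. For $\nu=2$ it is bounded by $Cd^{\sigma+\iota-2}|z|^2$ near the origin, which interpolates in the same way. In every case the result is $Cd^{\iota}$.

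Adding the two contributions and using $d\le1$ gives the bound $C\|\varphi\|_{C^{\sigma+\iota}}d^{\min(\iota,\alpha)}$, with $C$ depending on $N,\sigma,\alpha,\Lambda,K_0,\iota$.
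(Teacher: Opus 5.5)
Your proposal is correct and follows the paper's proof essentially step by step. It uses the same elementary near/far bound, and the same splitting into a kernel-difference term controlled by \ref{H2} and a second-difference-increment term controlled by Lemma~\ref{lem:delta2-increments} over $B_d$, an annulus and the exterior, after reducing to small $d$ via the elementary bound; cutting at radius $1$ instead of $1/2$ is harmless since $\varphi$ is globally $C^{\sigma+\iota}$.

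One small slip: when $\sigma+\iota>2$, Lemma~\ref{lem:near-far} gives $|\delta^2\varphi(x,z)|\le\|D^2\varphi\|_\infty|z|^2$ on $B$, not $C|z|^{\sigma+\iota}$. The near-field integrals in the elementary bound and in your first term therefore converge because $\sigma<2$, not because $\iota>0$; the paper treats this case separately.
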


\begin{proof}
\emph{The elementary bound.} By~\ref{H0} and Lemma~\ref{lem:near-far} (with $\iota$ in place of $\alpha$, $r=\delta$, and $\delta\in(0,1]$ fixed, e.g. $\delta=1$), if $\sigma + \iota <2$, then
\begin{eqnarray*}
	|L^\theta_{x,y}\varphi(x)|
	& \le &
	\frac\Lambda2 \left( \int_{B_\delta}|\delta^2\varphi(x,z)|\,|z|^{-(N+\sigma)}\,dz+
	\int_{B_\delta^c}|\delta^2\varphi(x,z)|\,|z|^{-(N+\sigma)}\,dz\right) \\
	& \le &
	\frac\Lambda 2 \left( 2[\varphi]_{C^{\sigma+\iota}(\R^N)}\int_{B_\delta} |z|^{\iota - N}\,dz+
	4 \| \varphi \|_\infty \int_{B_\delta^c} |z|^{-(N+\sigma)}\,dz\right) \\
	& \le & C(N, \sigma,\Lambda,\iota) \| \varphi \|_{C^{\sigma+\iota}(\R^N)}.
\end{eqnarray*}
In the case where $2\leq \sigma + \iota <3$, the first bound is replaced by
\[
\int_{B_\delta}|\delta^2\varphi(x,z)|\,|z|^{-(N+\sigma)}\,dz
\leq   \| D^2\varphi \| _{\infty} \int_{B_\delta} |z|^{ 2-N-\sigma}\,dz
\leq  C(N, \sigma) \| \varphi \|_{C^{\sigma+\iota}(\R^N)},
\]
and the same conclusion follows.

\emph{H\"older estimate of $L_{x,y}$.}
Writing $L^\theta_{z,y}\varphi(z)$ for $z \in \{x,x'\}$ in the symmetric form, we note that
\[
	L^\theta_{x,y}\varphi(x) - L^\theta_{x',y}\varphi(x') \ = \ J_1 + J_2,
\]
with
\begin{eqnarray*}
	&&J_1 := \frac12\int_{\R^N}\delta^2\varphi(x',z)\big[{ K^\theta(x,y,z)}-{ K^\theta(x',y,z)}\big]\,dz, \\
	&&J_2 := \frac12\int_{\R^N}\ \left(  \delta^2\varphi(x,z)-\delta^2\varphi(x',z)\right)\,{ K^\theta(x,y,z)}\,dz.
\end{eqnarray*}
By the same near/far-field computation as above, using~\ref{H2} and Lemma~\ref{lem:near-far},
\[
	|J_1| \ \le \ C \ \|\varphi\|_{C^{\sigma+\iota}(\R^N)}\,|x-x'|^\alpha.
\]
Set $d := |x-x'|$.
If $d>1/2$, then, by the elementary bound above,
\[
	\big|L^\theta_{x,y}\varphi(x)-L^\theta_{x',y}\varphi(x')\big| \ \le \
	2C\|\varphi\|_{C^{\sigma+\iota}(\R^N)} \ \le \
	2C\,2^{\min\{\iota,\alpha\}}\,\|\varphi\|_{C^{\sigma+\iota}(\R^N)}\,d^{\min\{\iota,\alpha\}},
\]
which proves the claim in this case, adjusting the constant.
Assume henceforth $d \le 1/2$, so that Lemma~\ref{lem:delta2-increments} applies with $\iota$ in place of $\alpha$.
By~\ref{H0},  we have
\[
	|J_2| \ \le \ \frac\Lambda2\int_{\R^N}| \delta^2\varphi(x,z)-\delta^2\varphi(x',z) |\,|z|^{-(N+\sigma)}\,dz,
\]
which we further split over $B_d$, the annulus $B_{1/2}\setminus B_d$, and $B_{1/2}^c$. Applying Lemma \ref{lem:delta2-increments} with $\varphi\in C^{\sigma+\iota}(\R^N)$, it is easy to see that
\[
	|J_2| \ \le \ C \,\|\varphi\|_{C^{\sigma+\iota}(\R^N)}\,d^\iota.
\]

\emph{Conclusion.} Combining both estimates yields
\[
	|L^\theta_{x,y}\varphi(x) - L^\theta_{x',y}\varphi(x')| \ \le \ |J_1|+|J_2| \ \le \ C\,\|\varphi\|_{C^{\sigma+\iota}(\R^N)}\,|x-x'|^{\min\{\iota,\alpha\}}.
\]
\end{proof}

We now turn to the approximate correctors themselves. Proposition~\ref{prop:approx-cell} below collects the quantitative estimates on $w_\lambda$, both in $\varphi$ and in $x$.

\begin{prop}[Properties of the approximate correctors]\label{prop:approx-cell}
Assume \ref{H0}-\ref{H2}, and \ref{H4}, \ref{H5} hold, with the same exponent $\alpha \in (0,\bar\alpha)$, where $\bar\alpha$ is given by Theorem~\ref{thm1} and $\sigma+\alpha\notin\N$, and let $\iota\in(0,1)$ with $\sigma+\iota\notin\{1,2\}$. Then, for every $x \in \R^N$, $\varphi \in C^{\sigma+\iota}_b(\R^N)$, and $\lambda > 0$,~\eqref{eq:dp} admits a unique periodic viscosity solution $w_\lambda(\cdot\,;x,\varphi) \in C^{\sigma+\alpha}(\T^N)$.
Moreover, there exists a constant $C >0$ depending on $N,\sigma,\Lambda_0,\Lambda,K_0,C_0,\alpha$ and $\iota$, such that:
\begin{enumerate}[label=\textup{(\roman*)}]
\item for every $x\in\R^N$, and $\varphi \in C^{\sigma+\iota}(\R^N)$,
	\[
	\lambda \|w_\lambda (\cdot; x,\varphi) \|_\infty
	\ \le \ C \Big(1 +  \|\varphi\|_{C^{\sigma+\iota}(\R^N)}\Big);
	\]
\item for every $x\in\R^N$ and $\varphi \in C^{\sigma+\iota}_b(\R^N)$,
	if $y_* \in \operatorname{argmin}_{\T^N} w_\lambda$,
	\[
	\|w_\lambda (\cdot; x,\varphi)  - w_\lambda(y_*; x,\varphi) \|_{C^{\sigma+\alpha}(\T^N)}
	\ \le \ C \Big(1 + \|\varphi\|_{C^{\sigma+\iota}(\R^N)}\Big);
	\]
\item for every $x\in\R^N$, and $\varphi_1,\varphi_2 \in C^{\sigma+\iota}(\R^N)$,
	\[
	\lambda \big\| w_\lambda(\cdot\,;x,\varphi_1) - w_\lambda(\cdot\,;x,\varphi_2) \big\|_\infty
	 \ \le \ C\, \|\varphi_1-\varphi_2\|_{C^{\sigma+\iota}(\R^N)};
	\]
\item for every $x,x' \in \R^N$ and $\varphi \in C^{\sigma+\iota}(\R^N)$,
	\[
	\lambda \big\| w_\lambda(\cdot\,;x,\varphi) - w_\lambda(\cdot\,;x',\varphi) \big\|_\infty
	 \ \le \ C\, |x-x'|^{\min\{\iota,\alpha\}} \Big(1+\|\varphi\|_{C^{\sigma+\iota}(\R^N)}\Big){ ;}
	\]
\item for every $x \in \R^N$, $\varphi \in C^{\sigma+\iota}(\R^N)$,
	\[
	\big\| \lambda w_\lambda( \cdot ;x,\varphi) + \overline H(x,\varphi) \big\|_\infty
	\le \ \lambda \, C \big(1 + \|\varphi\|_{C^{\sigma+\iota}(\R^N)}\big).
	\]
\end{enumerate}
\end{prop}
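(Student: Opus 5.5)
The plan is to treat \eqref{eq:dp} as a proper, uniformly elliptic Bellman equation on $\T^N$ with frozen slow variable $x$ and datum $\tilde\ell^\theta(\cdot\,;x,\varphi)$. We then read off all estimates from the comparison principle, Lemma~\ref{lem:comparison-regular}, together with the regularity theory of Section~\ref{sec-reg}. First, by Lemmas~\ref{lem:Lxy-holder-y} and~\ref{lem:Lxy-holder-x} and \ref{H5}, the datum $\tilde\ell(\cdot\,;x,\varphi)$ belongs to $E$ with
\[
\|\tilde\ell(\cdot\,;x,\varphi)\|_* \le C\big(1+\|\varphi\|_{C^{\sigma+\iota}(\R^N)}\big) =: M_\varphi .
\]
Constants $\pm M_\varphi/\lambda$ are then classical super/subsolutions, which gives (i). Existence follows by Perron's method, using the standard comparison principle for \eqref{eq:dp}, which has continuous kernels in $\theta$ by \ref{H1} and $x$ frozen. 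For regularity, we apply Corollary~\ref{cor:proper} (equivalently Theorem~\ref{thm1} with $\A\equiv\{\delta_\theta\}$-type constant index set, so $A_0=0$) to $w_\lambda - w_\lambda(y_*)$. This function solves an equation of the same form, with running cost $\tilde\ell - \lambda w_\lambda$. Here $\lambda w_\lambda$ is bounded by (i), and its H\"older seminorm is controlled by the Krylov--Safonov-type $C^\gamma$ estimates of \cite{CS09}. Uniqueness then follows from Lemma~\ref{lem:comparison-regular}, since the solution is classical.

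For (ii), the difficulty is that Corollary~\ref{cor:proper} needs $\|u\|_{C^\alpha}$ on the right-hand side, and we need this bound independent of $\lambda$. The plan is as follows. Set $v_\lambda:=w_\lambda-w_\lambda(y_*)\ge0$. It satisfies the Pucci inequalities $\MM^\pm_{\mathcal L_0} v_\lambda \gtrless \mp 2M_\varphi$, since $|\lambda w_\lambda|\le M_\varphi$ by (i). Periodicity and the Harnack inequality of \cite{CS09} then give the oscillation bound $\|v_\lambda\|_\infty\le CM_\varphi$, and hence the bound $[v_\lambda]_{C^\gamma}\le CM_\varphi$. Bootstrapping $\gamma\to\alpha$ as in Step~1 of Proposition~\ref{prop:existence-Csigma}, and then applying Theorem~\ref{thm1} on unit balls covering $\T^N$ with running cost $\tilde\ell-\lambda w_\lambda(y_*)-\lambda v_\lambda$, yields (ii). I expect this to be the main obstacle. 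The Harnack-based, $\lambda$-uniform oscillation bound must be checked carefully: one would argue by contradiction and compactness if a direct Harnack chain is awkward, normalizing by $\mathrm{osc}\,v_\lambda$ and using the strong maximum principle for the limiting Pucci problem on the torus.

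Items (iii) and (iv) follow by comparison. The difference of the data satisfies
\[
\sup_{y,\theta}|\tilde\ell^\theta(y;x,\varphi_1)-\tilde\ell^\theta(y;x,\varphi_2)|\le C\|\varphi_1-\varphi_2\|_{C^{\sigma+\iota}}
\]
by Lemma~\ref{lem:Lxy-holder-x}. Hence $w_\lambda(\cdot;x,\varphi_2)\pm C\|\varphi_1-\varphi_2\|/\lambda$ are classical super/subsolutions for the $\varphi_1$ problem, and Lemma~\ref{lem:comparison-regular} gives (iii). For (iv), we use $w_\lambda(\cdot;x',\varphi)$, which is $C^{\sigma+\alpha}$ with the bound from (ii), as a test function in the problem at $x$. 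Changing $L_{x',y}$ into $L_{x,y}$ costs $C_1K_0\|v_\lambda\|_{C^{\sigma+\alpha}}|x-x'|^\alpha$, exactly as in the proof of Lemma~\ref{lem:A-reg}. Note that constants are annihilated by $L$, so only the normalized $v_\lambda$ enters. Changing $\tilde\ell(\cdot;x',\varphi)$ into $\tilde\ell(\cdot;x,\varphi)$ costs $C(1+\|\varphi\|)|x-x'|^{\min(\iota,\alpha)}$ by Lemma~\ref{lem:Lxy-holder-x} and \ref{H5}. Adding the corresponding constant divided by $\lambda$ and comparing gives (iv).

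For (v), let $w$ be the corrector of Theorem~\ref{thm:cell-phi} for the datum $\tilde\ell(\cdot;x,\varphi)$ and ergodic constant $c=\overline H(x,\varphi)$. Its oscillation is bounded by $C\|\tilde\ell\|_*\le CM_\varphi$. The functions $w - c/\lambda \pm \|w\|_\infty$, with $w$ normalized by $w(0)=0$, are classical super- and subsolutions of \eqref{eq:dp}: indeed $\lambda(w-c/\lambda\pm\|w\|_\infty)+\sup\{\cdots\} = \lambda w\pm\lambda\|w\|_\infty$, which has the right sign. Lemma~\ref{lem:comparison-regular} then gives $|w_\lambda + c/\lambda|\le 2\|w\|_\infty$, and multiplying by $\lambda$ yields (v).
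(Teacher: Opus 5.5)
Your proposal is correct and follows essentially the paper's route. Item (ii) rests on the Harnack and H\"older estimates of \cite{CS09} followed by the Schauder estimate of Serra (Theorem~\ref{thm1} with $A_0=0$), and items (i), (iii), (iv) and (v) use constant barriers and shifted regular competitors together with Lemma~\ref{lem:comparison-regular}. The differences are minor: you apply the Harnack inequality directly to $v_\lambda\ge0$ with source term $2M_\varphi$ instead of via the paper's contradiction argument, you make explicit the bootstrap from $C^\gamma$ to $C^\alpha$ and the extra cost $-\lambda v_\lambda$ that the paper passes over, and in (v) you take $\|w\|_\infty\le CM_\varphi$ directly from Theorem~\ref{thm:cell-phi} rather than by passing to the limit in (ii).
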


\begin{proof}
(i). The slow variable $x$ remains fixed throughout the statement and proof of the first part, so it is obtained exactly as in Lemma 3.1 in \cite{RPT23}. By Lemma~\ref{lem:Lxy-holder-x} and~\ref{H5},
\[
	\sup_{\theta} \|\tilde\ell^\theta(\cdot\,;x,\varphi)\|_\infty \ \le \
	C\big(1+\|\varphi\|_{C^{\sigma+\iota}(\R^N)}\big).
\]
Making use of the comparison principle given in Lemma~\ref{lem:comparison-regular}, with $v$ the constant barrier $\pm\lambda^{-1}C(1+\|\varphi\|_{C^{\sigma+\iota}(\R^N)})$, the result follows.
\smallskip

(ii).
For the second part, we aim to establish
\begin{equation}\label{dp_Linfty2}
	\|w_\lambda(\cdot; x, \varphi) - w_\lambda(y_*; x, \varphi)\|_\infty \leq C \left(1 + \|\varphi\|_{C^{\sigma+\iota}(\R^N)}\right),
\end{equation}
by contradiction, as in~\cite{RPT23}: the slow variable $x$ remains fixed throughout, and the argument takes place entirely in $y \in \T^N$. Suppose there exist $\lambda_k \to 0^+$, $x_k \in \R^N$, and $\varphi_k \in C^{\sigma+\iota}(\R^N)$ such that, writing $w_k := w_{\lambda_k}(\cdot\,;x_k,\varphi_k)$ and $y_k \in \operatorname{argmin}_{\T^N} w_k$,
\[
	\|w_k - w_k(y_k)\|_\infty \ \ge \ k\big(1+\|\varphi_k\|_{C^{\sigma+\iota}(\R^N)}\big).
\]
Set $\eta_k := \|w_k-w_k(y_k)\|_\infty^{-1}$ and $\widetilde w_k := \eta_k(w_k - w_k(y_k))$, so that $\widetilde w_k \ge 0$, $\widetilde w_k(y_k)=0$, and $\|\widetilde w_k\|_\infty=1$. Since $w_k$ solves~\eqref{eq:dp} and $L^\theta_{x_k,y}$ is linear and vanishes on constants, substituting $w_k = \eta_k^{-1}\widetilde w_k + w_k(y_k)$ into~\eqref{eq:dp} and multiplying through by $\eta_k>0$ shows that $\widetilde w_k$ solves
\[
	\lambda_k \widetilde w_k(y) + \lambda_k \eta_k w_k(y_k) + \sup_\theta \big\{ -L^\theta_{x_k,y} \widetilde w_k(y) - \eta_k \tilde\ell^\theta(y;x_k,\varphi_k) \big\} = 0 \quad \text{in } \T^N.
\]
By part (i), $\lambda_k |w_k(y_k)| \le C(1+\|\varphi_k\|_{C^{\sigma+\iota}(\R^N)})$, and $\|\tilde\ell^\theta(\cdot;x_k,\varphi_k)\|_\infty \le C(1+\|\varphi_k\|_{C^{\sigma+\iota}(\R^N)})$.
Since $\eta_k^{-1} \ge k(1+\|\varphi_k\|_{C^{\sigma+\iota}(\R^N)})$,
\[
	\lambda_k \eta_k |w_k(y_k)| \ \le \ \eta_k C\big(1+\|\varphi_k\|_{C^{\sigma+\iota}(\R^N)}\big) \ \le \ \frac{C}{k}, \qquad \eta_k \|\tilde\ell^\theta(\cdot;x_k,\varphi_k)\|_\infty \ \le \ \frac{C}{k},
\]
both $\to 0$ as $k\to\infty$; and $\lambda_k\widetilde w_k(y) \to 0$ uniformly as well, since $\lambda_k \to 0^+$ and $\|\widetilde w_k\|_\infty=1$. Hence
\[
	\mathcal M^+_{\mathcal L_0} \widetilde w_k \ \ge \ -o_k(1), \qquad \mathcal M^-_{\mathcal L_0} \widetilde w_k \ \le \ o_k(1) \qquad \text{in } \T^N,
\]
where $\mathcal M^\pm_{\mathcal L_0}$ are the extremal (Pucci-type) operators for the ellipticity class~$\mathcal L_0(\sigma,\Lambda_0,\Lambda)$. By the Harnack inequality of Caffarelli--Silvestre~\cite[Thm.~11.1]{CS09}, applied directly to the nonnegative function $\widetilde w_k$, for every $k$, there exists a constant $C>0$ depending only on $N,\sigma,\Lambda_0,\Lambda$, and in particular not on $k$, $x_k$, or $\lambda_k$, such that
\[
	1 \ = \ \sup_{\T^N} \widetilde w_k \ \le \ C\Big( \inf_{\T^N} \widetilde w_k + o_k(1) \Big) \ = \ C \, o_k(1),
\]
using $\inf_{\T^N}\widetilde w_k = 0$ (since $\widetilde w_k \ge 0$ and $\widetilde w_k(y_k)=0$). Since $o_k(1) \to 0$, this is a contradiction for $k$ large enough, proving~\eqref{dp_Linfty2}. \smallskip

Furthermore, by the H\"older estimate of~\cite{CS09}, there exists $\alpha_1\in(0,1)$, depending only on $N,\sigma,\Lambda_0,\Lambda$, such that
\[
	[w_\lambda - w_\lambda(y_*)]_{C^{\alpha_1}(\T^N)}
	\le C\Big(\|w_\lambda - w_\lambda(y_*)\|_\infty +
	\lambda\|w_\lambda\|_\infty + \sup_\theta \|\tilde\ell^\theta(\cdot\,;x,\varphi)\|_\infty\Big)
	 \le C\big(1+\|\varphi\|_{C^{\sigma+\iota}(\R^N)}\big).
\]
\smallskip

The regularity upgrade to $C^{\sigma+\alpha}(\T^N)$ now follows from the Schauder-type estimate of Serra~\cite{Serra15}. Given the base regularity $w_\lambda - w_\lambda(y_*) \in C^{\alpha_1}(\T^N)$ just established, and the H\"older continuity of the right-hand side $\tilde\ell^\theta(\cdot\,;x,\varphi)$ of~\eqref{eq:dp}, the estimate of~\cite{Serra15} gives $w_\lambda \in C^{\sigma+\alpha}(\T^N)$, with
\begin{align*}
	\|w_\lambda - w_\lambda(y_*)\|_{C^{\sigma+\alpha}(\T^N)}
	&\le C\Big( \|w_\lambda - w_\lambda(y_*)\|_{C^{\alpha_1}(\T^N)} +
	\lambda\|w_\lambda\|_\infty + \sup_\theta \|\tilde\ell^\theta(\cdot\,;x,\varphi)\|_{C^\alpha(\T^N)}\Big) \\
	&\le C\big(1+\|\varphi\|_{C^{\sigma+\iota}(\R^N)}\big).
\end{align*}
\smallskip

(iii). Write $w_i := w_\lambda(\cdot\,;x,\varphi_i)$, $i=1,2$, both solving~\eqref{eq:dp} with the \emph{same} frozen point $x$, and set
\[
	M \ := \ \lambda^{-1} C \|\varphi_1-\varphi_2\|_{C^{\sigma+\iota}(\R^N)},
\]
where $C>0$ is the same constant as in  Lemma~\ref{lem:Lxy-holder-x}, so that,
for every $y\in\T^N,\ \theta\in\Theta$,
\[
	\big|L^\theta_{x,y}(\varphi_1-\varphi_2)(x)\big| \ \le \
	C\|\varphi_1-\varphi_2\|_{C^{\sigma+\iota}(\R^N)}.
\]
It is easy to see that $v_\pm := w_1 \pm M$ are respectively a classical supersolution/subsolution of the equation solved by $w_2$. In view of Lemma~\ref{lem:comparison-regular} we deduce that $v_- \le w_2 \le v_+$ on $\T^N$, i.e.
\[
 	\|w_1-w_2\|_\infty \le M = \lambda^{-1}C\|\varphi_1-\varphi_2\|_{C^{\sigma+\iota}(\R^N)}.
\]

(iv). Write $w := w_\lambda(\cdot\,;x,\varphi)$, $w' := w_\lambda(\cdot\,;x',\varphi)$, both solving~\eqref{eq:dp} with the \emph{same} data $\varphi$ but at \emph{different} frozen points $x,x'$.
Since, by part (i), $\lambda\|w-w'\|_\infty \le 2C(1+\|\varphi\|_{C^{\sigma+\iota}(\R^N)})$, the claim is immediate if $|x-x'|>1$; we therefore assume $|x-x'|\le1$, so that $|x-x'|^\alpha\le|x-x'|^{\min(\iota,\alpha)}$. We first establish a preliminary bound, to cope with the differences in the two equations. In this case, both the operator itself and the right-hand side change between the two equations.

\emph{The operator's dependence on $w'$.} Note first that
\(
\delta^2 w'(y,z) = \delta^2\big(w'-w'(y_*)\big)(y,z),
\)
and therefore
\begin{eqnarray*}
	L^\theta_{x,y}w'(y) - L^\theta_{x',y}w'(y)
	& = & \frac12 \int_{\R^N} \delta^2 \big(w'-w'(y_*)\big)(y,z)\,\big[{ K^\theta(x,y,z)}-{ K^\theta(x',y,z)}\big]\,dz,
\end{eqnarray*}
so, by Lemma~ \ref{lem:near-far} and~\ref{H2},
\begin{eqnarray*}
	\big|L^\theta_{x,y}w'(y) - L^\theta_{x',y}w'(y)\big|
	& \le & \frac12\int_{\R^N} \big|\delta^2 \big(w'-w'(y_*)\big)(y,z)\big|\,K_0\,|x-x'|^\alpha\,|z|^{-(N+\sigma)}\,dz \\
	& \le & C\,\|w'-w'(y_*)\|_{C^{\sigma+\alpha}(\T^N)}\,|x-x'|^\alpha.
\end{eqnarray*}

\emph{The data term.} By~ \eqref{eq:cell-pb},
\[
	\tilde\ell^\theta(y;x,\varphi)-\tilde\ell^\theta(y;x',\varphi) \ = \ \big[\ell^\theta(x,y)-\ell^\theta(x',y)\big] \ + \ \big[L^\theta_{x,y}\varphi(x)-L^\theta_{x',y}\varphi(x')\big],
\]
bounded, respectively, by~\ref{H5} and by Lemma~\ref{lem:Lxy-holder-x}:
\begin{eqnarray*}
	\big|\tilde\ell^\theta(y;x,\varphi)-\tilde\ell^\theta(y;x',\varphi)\big|
	& \le & C_0\,|x-x'|^\alpha \ + \ C\|\varphi\|_{C^{\sigma+\iota}(\R^N)}\,|x-x'|^{\min(\iota,\alpha)}\\
	& \le & C \left( 1 + \|\varphi\|_{C^{\sigma+\iota}(\R^N)}\right) |x-x'|^{\min(\iota,\alpha)}.
\end{eqnarray*}

Set
\[
	M \ := \ \lambda^{-1}C\Big(1+\|\varphi\|_{C^{\sigma+\iota}(\R^N)}+
	\|w'-w'(y_*)\|_{C^{\sigma+\alpha}(\T^N)}\Big)|x-x'|^{\min(\iota,\alpha)},
\]
with $C$ as in the preliminary bound above, and $y_* \in \operatorname{argmin}_{\T^N}w'$ as in part (ii) above.
It is then easy to check that $v_\pm := w' \pm M$ are respectively a classical supersolution/subsolution of the equation solved by $w$. In view of Lemma~\ref{lem:comparison-regular} we deduce that $v_- \le w \le v_+$ on $\T^N$, i.e.
\[
 	\|w-w'\|_\infty \ \le \ M \ = \ \lambda^{-1}C\Big(1+\|\varphi\|_{C^{\sigma+\iota}(\R^N)}+
	\|w'-w'(y_*)\|_{C^{\sigma+\alpha}(\T^N)}\Big)|x-x'|^{\min(\iota,\alpha)}.
\]
Since $\|w'-w'(y_*)\|_{C^{\sigma+\alpha}(\T^N)} \le C\big(1+\|\varphi\|_{C^{\sigma+\iota}(\R^N)}\big)$ by part (ii) above, uniformly in $\lambda$, substituting into $M$ gives
\[
	\big\|w_\lambda(\cdot\,;x,\varphi) - w_\lambda(\cdot\,;x',\varphi)\big\|_\infty \ \le \ \lambda^{-1}C\big(1+\|\varphi\|_{C^{\sigma+\iota}(\R^N)}\big)\,|x-x'|^{\min(\iota,\alpha)}.
\]

(v). {By Theorem~\ref{thm:cell-phi}, applied to $\phi = \tilde\ell(\cdot\,;x,\varphi) \in E$, there exists a periodic corrector $w \in C^{\sigma+\alpha}(\T^N)$, unique up to additive constants, solving~\eqref{eq:cell-pb} with constant $\overline H(x,\varphi)$. By the same vanishing-discount construction underlying Theorem~\ref{thm:cell-phi}, $w$ is obtained as the locally uniform limit of $\widetilde w_{\lambda_n} = w_{\lambda_n}-w_{\lambda_n}(y_*)$, along a vanishing-discount sequence $\lambda_n \to 0^+$, so that}
\[
	\sup_\theta\big\{-L^\theta_{x,y}w(y)-\tilde\ell^\theta(y;x,\varphi)\big\} \ = \ \overline H(x,\varphi) \qquad \text{in } \T^N.
\]
Therefore, passing to the limit in the bound \eqref{dp_Linfty2} of part (ii), we get
\[
	\|w\|_\infty \ \le \ C\big(1+\|\varphi\|_{C^{\sigma+\iota}(\R^N)}\big) \ =: \ M.
\]

Since  $w$ solves~\eqref{eq:cell-pb} with constant $\overline H(x,\varphi)$, it follows that $v^\pm:= w - \overline H(x,\varphi)/\lambda \pm M$ are respectively super-/subsolutions of~\eqref{eq:dp}. Lemma~\ref{lem:comparison-regular} then gives $v^- \le w_\lambda \le v^+$, i.e., for every $y \in \T^N$,
\[
	\Big| w_\lambda(y;x,\varphi) - w(y) + \frac{\overline H(x,\varphi)}{\lambda} \Big| \ \le \ M.
\]
Multiplying the above by $\lambda$ further yields
\[
	\big| \lambda w_\lambda(y;x,\varphi) + \overline H(x,\varphi) \big|  \le
	\big|\lambda w_\lambda(y;x,\varphi)+\overline H(x,\varphi) - \lambda w(y)\big| + \lambda|w(y)|  \le
	 2\lambda M,
\]
which is the desired vanishing rate.
\end{proof}

\subsection{The rate of convergence}\label{subsec:rates}
We now prove Theorem~\ref{thm:rates-homog}. The overall strategy is the one of Camilli and Marchi~\cite{CM09}, as already adapted to the nonlocal, non-translation-invariant setting by Rodr\'iguez-Paredes and Topp~\cite{RPT23}. We first record the well-posedness of~{\eqref{eq:HJ-eps}} itself, for fixed $\epsilon\in(0,1)$, together with a bound independent of $\epsilon$. By~\cite[Proposition 2.5]{RPT23}, combined with the Schauder-type estimate of Serra~\cite{Serra15}, for every $\epsilon\in(0,1)$ there exists a unique solution $u^\epsilon \in C_b^{\sigma+\alpha}(\R^N)$ of~{\eqref{eq:HJ-eps}}, so that~{\eqref{eq:HJ-eps}} holds classically at every point. Moreover, there exists some ${ \alpha_2}\in(0,1)$ and a constant $M>0$, such that, uniformly in $\epsilon \in (0,1)$,
\begin{equation}\label{eq:rate-ueps-bound}
	\|u^\epsilon\|_{L^\infty(\R^N)} + [u^\epsilon]_{C^{{ \alpha_2}}(\R^N)} \ \le \ M.
\end{equation}
However, the full $C^{\sigma+\alpha}$ norm is \emph{not} uniform in $\epsilon$, since the fast oscillation of $K,\ell$ in $x/\epsilon$ prevents the Schauder constant from staying bounded as $\epsilon\to0$.

\begin{proof}
The proof is analogous to the earlier result of \cite{RPT23}, and we therefore present the argument in an abbreviated form, emphasizing the difficulties created by the extra $x$-dependence. The idea is to make the perturbed test function method of Evans~\cite{E89,E92} quantitative: rather than using the corrector $w$ itself, one uses instead approximate correctors $w_\lambda$, and compares $u^\epsilon$ directly against the perturbed test function $u + \epsilon^\sigma w_\lambda(\cdot/\epsilon\,;\cdot,u)$.

\smallskip
\emph{Average profile $u$.}
By Proposition~\ref{prop:existence-Csigma}, problem~\eqref{eq:HJ-bar} has a unique solution
$u \in C^{\sigma+\alpha}_b(\R^N)$, solving~\eqref{eq:HJ-bar} classically, with
\begin{equation}\label{eq:rate-M1}
	\|u\|_{C^{\sigma+\alpha}(\R^N)} \ \le \ \overline M
\end{equation}
for some $\overline M>0$ depending on the data.
This is an improvement with respect to~\cite{RPT23}, where the analogous interior estimate forced the choice of a strictly smaller auxiliary exponent.

\smallskip
\emph{Approximate correctors $w_\lambda$.}
Fix $\lambda >0$. Let $w_\lambda : = w_\lambda (y; x,u)$ be the approximate corrector associated to $u\in C_b^{\sigma+\alpha}(\R^N)$. In view of Proposition~\ref{prop:approx-cell} there exists a constant $C_1>0$, depending only on the data, such that
\begin{equation}\label{eq:rate-corrector-Linfty}
	\lambda \sup_{x\in\R^N} \| w_\lambda(\cdot; x,u) \|_\infty
	\le \ C_1 \left( 1 + \|u\|_{C^{\sigma+\alpha}(\R^N)} \right) \leq C_1(1+\overline M) =: C_2.
\end{equation}

\smallskip
\emph{Localization function $\psi$.}
Let $R :=  \ 2(M+\overline M) + 2C_2 +1$.
Consider a radial,  radially nondecreasing function $\psi \in C^{\infty}(\R^N)\cap L^\infty(\R^N)$ such that
	\begin{equation}\label{psi_func}
			\psi (x) = |x|^2 \  \mbox{ in } B, \quad
			\psi = R  \mbox{ in } \R^N \setminus B_2, \quad
			0 \leq \psi \leq R  \mbox{ in } B_2 \setminus B,
	\end{equation}
and set $\psi_\beta(x):=\psi(\beta x)$, $\beta\in(0,1)$. In view of assumption \ref{H0}, there exists  $M_1 =  M_1(N,\sigma,\Lambda,\psi)>0$ such that, for every $x\in\R^N,\ y\in\T^N,\ \theta\in\Theta$,
\[
	\big|L^\theta_{x,y}\psi (x)\big| \ \le \ M_1, \qquad
	\big|L^\theta_{x,y}\psi_\beta(x)\big| \ \le \ M_1\,\beta^\sigma.
\]

\smallskip
\emph{The perturbed test function.} The strategy is to build a perturbed test function $\varphi$ touching $u^\epsilon$ from above, evaluate~{\eqref{eq:HJ-eps}} classically at the touching point, and use property (v) of Proposition~\ref{prop:approx-cell} to make the effective Hamiltonian $\overline H$ appear, which then connects, via~\eqref{eq:HJ-bar}, to $u$ itself. The one subtlety is that the natural candidate, $\varphi^\epsilon(x)=u(x)+\epsilon^\sigma w_\lambda(x/\epsilon;x,u)$, is not itself regular enough to serve this role, as it carries the slow variable $x$ in the corrector's own frozen parameter. We therefore need to freeze the corrector's parameter at a point $\hat x$, and touch $u^\epsilon$ a second time, at a nearby point $\tilde x$, at the cost of tracking the small error between $\hat x$ and $\tilde x$.

As such, consider at first, for $x\in\R^N$,
\begin{equation*}
	\Phi(x) = u^\epsilon(x) - u(x) - \epsilon^\sigma w_\lambda(\frac{x}{\epsilon};x,u ) - \psi_\beta(x),
\end{equation*}
with $\lambda$ depending on $\epsilon$ so that $\epsilon^\sigma \lambda^{-1} \leq 1$.
By the choice of $R$ above, it is easy to see that the continuous function $\Phi$ attains its supremum at some $\hat x \in \overline{B_{2/\beta}}$.

\smallskip
Then, fix $\hat x$ as above. For $r\in(0,1)$ and $\kappa>0$ to be chosen, let
\[
	\varphi^\epsilon(x) \ := \ u(x) + \epsilon^\sigma w_\lambda\big(\frac{x}{\epsilon};\hat x,u\big) + \psi_\beta(x) + \kappa\,\psi(x-\hat x), \qquad x\in\R^N,
\]
and set $\tilde\Phi := u^\epsilon - \varphi^\epsilon$. The function $\varphi^\epsilon\in C_b^{\sigma+\alpha}(\R^N)$: its corrector term $w_\lambda(\cdot\,;\hat x,u)$ is now frozen at the fixed point $\hat x$, so the only remaining $x$-dependence is through the fast variable $x/\epsilon$. It is easy to see, in view of Proposition~\ref{prop:approx-cell}(iv) and~\eqref{eq:rate-corrector-Linfty}, that choosing
\begin{equation}\label{eq:kappa}
	\kappa \ := \ 2C_2\,r^{\alpha-2}\,\epsilon^\sigma\lambda^{-1},
\end{equation}
the continuous function $\tilde\Phi$ attains its supremum at some point $\tilde x\in B_r(\hat x)$.

\smallskip
From above, $u^\epsilon -\varphi^\epsilon$ has a global maximum at $\tilde x$. Since $\varphi^\epsilon\in C_b^{\sigma+\alpha}(\R^N)$, the viscosity inequality for $u^\epsilon$ at $\tilde x$ gives
\[
	u^\epsilon(\tilde x) \ \le \ -\sup_\theta\Big\{-L^\theta_{\tilde x,\frac{\tilde x}{\epsilon}}\varphi^\epsilon(\tilde x)
		- \ell^\theta(\tilde x,\frac{\tilde x}{\epsilon})\Big\}.
\]
By linearity of $L^\theta_{\tilde x,\frac{\tilde x}{\epsilon}}$,
\[
	L^\theta_{\tilde x,\frac{\tilde x}{\epsilon}}\varphi^\epsilon(\tilde x) \ = \
	L^\theta_{\tilde x,\frac{\tilde x}{\epsilon}}u(\tilde x)
	\ + \ \epsilon^\sigma L^\theta_{\tilde x,\frac{\tilde x}{\epsilon}}\big[w_\lambda(\cdot/\epsilon;\hat x,u)\big](\tilde x)
	\ + \ L^\theta_{\tilde x,\frac{\tilde x}{\epsilon}}\psi_\beta(\tilde x)
	\ + \ \kappa\,L^\theta_{\tilde x,\frac{\tilde x}{\epsilon}}\big[\psi(\cdot-\hat x)\big](\tilde x).
\]
The corrector term is related to $w_\lambda$ itself, rather than its rescaling, by the identity
\[
	\epsilon^\sigma L^\theta_{\tilde x,\frac{\tilde x}{\epsilon}}\big[w_\lambda(\cdot/\epsilon;\hat x,u)\big](\tilde x) \ = \ L^\theta_{\tilde x,\frac{\tilde x}{\epsilon}}\big[w_\lambda(\cdot\,;\hat x,u)\big]\Big(\frac{\tilde x}{\epsilon}\Big),
\]
a consequence of the scaling property~\ref{H3}, exactly as in the derivation of the ansatz~\eqref{ansatz} in Section~\ref{subsec:derivation}. The two localization terms are bounded uniformly in $\theta$:
\[
	\big|L^\theta_{\tilde x,\frac{\tilde x}{\epsilon}}\psi_\beta(\tilde x)\big| \ \le \ M_1\beta^\sigma, \qquad
	\big|L^\theta_{\tilde x,\frac{\tilde x}{\epsilon}}\big[\psi(\cdot-\hat x)\big](\tilde x)\big| \ \le \ M_1.
\]
This leads to
\begin{eqnarray*}
u^\epsilon(\tilde x)
& \le & -\sup_\theta\Big\{-L^\theta_{\tilde x,\frac{\tilde x}{\epsilon}}u(\tilde x)
	- L^\theta_{\tilde x,\frac{\tilde x}{\epsilon}}\big[w_\lambda(\cdot\,;\hat x,u)\big]\Big(\frac{\tilde x}{\epsilon}\Big)
	- \ell^\theta\left(\tilde x,\frac{\tilde x}{\epsilon}\right)\Big\} \ + \ M_1(\beta^\sigma+\kappa) \\
& \le & -\sup_\theta\Big\{-L^\theta_{\hat x,\frac{\tilde x}{\epsilon}}u(\hat x)
	- L^\theta_{\hat x,\frac{\tilde x}{\epsilon}}\big[w_\lambda(\cdot\,;\hat x,u)\big]\Big(\frac{\tilde x}{\epsilon}\Big)
	- \ell^\theta\left(\hat x,\frac{\tilde x}{\epsilon}\right)\Big\} \ + \ Cr^\alpha \ + \ M_1(\beta^\sigma+\kappa)\\
& = & \lambda\,w_\lambda(\frac{\tilde x}{\epsilon};\hat x,u) + \ Cr^\alpha \ + \ M_1(\beta^\sigma+\kappa),
\end{eqnarray*}
where we replaced $\tilde x$ by $\hat x$ at a cost $Cr^\alpha$, using $|\tilde x-\hat x|<r$ and three Hölder-in-$x$ estimates: Lemma~\ref{lem:Lxy-holder-x} for $u$, assumption~\ref{H5} for $\ell$, and, for $w_\lambda(\cdot\,;\hat x,u)$, the estimate opening the proof of Lemma~\ref{lem:A-reg} combined with Proposition~\ref{prop:approx-cell}(ii). We then recognize, in the last identity, the approximate cell problem satisfied by $\lambda\,w_\lambda(\tilde x/\epsilon;\hat x,u)$.

\smallskip
By Proposition~\ref{prop:approx-cell}(v), we further note that
\[
	\lambda\,w_\lambda\Big(\frac{\tilde x}{\epsilon};\hat x,u\Big) \ \le \ -\overline H(\hat x,u) \ + \ \lambda C_2.
\]
Since $u$ solves~\eqref{eq:HJ-bar} classically at $\hat x$, we have $\overline H(\hat x,u) = -u(\hat x)$.
Therefore we arrive at the key estimate:
\begin{equation}\label{eq:key-bound}
	u^\epsilon(\tilde x) - u(\hat x) \ \le \ C\big(\lambda + r^\alpha + \beta^\sigma + \kappa\big).
\end{equation}

\smallskip
\emph{Conclusion.} Since $\hat x$ maximizes $\Phi$ and $\tilde x$ maximizes $\tilde\Phi$ over all of $\R^N$, we have $\Phi(x)\le\Phi(\hat x)=\tilde\Phi(\hat x)\le\tilde\Phi(\tilde x)$ for every $x\in\R^N$. Unwinding the definitions of $\Phi,\tilde\Phi$, and discarding the nonpositive terms $-\psi_\beta(\tilde x)$, $-\kappa\psi(\tilde x-\hat x)$,
\[
	u^\epsilon(x) - u(x) \ \le \ \big[u^\epsilon(\tilde x)-u(\tilde x)\big] \ + \ \epsilon^\sigma\Big[w_\lambda\big(\frac{x}{\epsilon};x,u\big) - w_\lambda\big(\frac{\tilde x}{\epsilon};\hat x,u\big)\Big] \ + \ \psi_\beta(x).
\]
The middle term is bounded crudely, via~\eqref{eq:rate-corrector-Linfty} alone, by $2\epsilon^\sigma C_2/\lambda$. For the first, we employ that $u\in C^\alpha_b(\R^N)$ and $|\tilde x-\hat x|<r$ to get $u(\tilde x)-u(\hat x)\ge-\overline M r^\alpha$, so that, by~\eqref{eq:key-bound},
\[
	u^\epsilon(\tilde x)-u(\tilde x) \ = \ \big[u^\epsilon(\tilde x)-u(\hat x)\big] - \big[u(\tilde x)-u(\hat x)\big] \ \le \ C\big(\lambda+r^\alpha+\beta^\sigma+\kappa\big).
\]
Combining and substituting $\kappa$ from~\eqref{eq:kappa}, we obtain, for every $x\in\R^N$,
\[
	u^\epsilon(x) - u(x) \ \le \ C\Big(\lambda + r^\alpha + \beta^\sigma + \frac{\epsilon^\sigma}{\lambda} + \frac{\epsilon^\sigma}{\lambda\,r^{2-\alpha}}\Big) \ + \ \psi_\beta(x).
\]
Letting first $\beta\to0^+$, then choosing
\(
	\lambda :=  \epsilon^{{\sigma\alpha}/{(2+\alpha)}},
	 r  :=  \epsilon^{{\sigma}/{(2+\alpha)}},
\)
the terms $\lambda$, $r^\alpha$ and $\epsilon^\sigma\lambda^{-1}r^{\alpha-2}$ coincide and equal $\epsilon^{{\alpha_*}}$, while $\epsilon^\sigma/\lambda = \epsilon^{2\sigma/(2+\alpha)}\le\epsilon^{{\alpha_*}}$ (and in particular $\epsilon^\sigma\lambda^{-1}\le1$, as required), with
\(
	{\alpha_*} := {\sigma\alpha}/{(2+\alpha)} \in (0,\alpha).
\)
The reverse inequality is obtained symmetrically, exchanging the roles of $u^\epsilon$ and $u$ throughout, and we conclude
\[
	\|u^\epsilon-u\|_{L^\infty(\R^N)} \ \le \ C\,\epsilon^{{\alpha_*}}, \qquad {\alpha_*} = \frac{\sigma\alpha}{2+\alpha},
\]
which proves Theorem~\ref{thm:rates-homog}.

\end{proof}

\appendix \section{Appendix}

We record the following elementary bound on $\delta^2u$ at a single point, used repeatedly throughout the paper.

\begin{lema}\label{lem:near-far}
Let $\sigma+\alpha \in (0,3)\setminus\{1,2\}$, $\nu := \lfloor \sigma+\alpha \rfloor$, $x \in \R^N$ and $r>0$, and let $u \in C^{\sigma+\alpha}(B_r(x)) \cap L^\infty(\R^N)$. Then
\[
	|\delta^2u(x,z)| \ \le \
	\begin{cases}
		2\,[u]_{C^{\sigma+\alpha}(B_r(x))} \, |z|^{\sigma+\alpha}, & |z| < r, \ \nu \in \{0,1\}, \\[4pt]
		\|D^2u\|_{L^\infty(B_r(x))} \, |z|^{2}, & |z| < r, \ \nu = 2, \\[4pt]
		4\,\|u\|_{L^\infty(\R^N)}, & |z| \geq r.
	\end{cases}
\]
\end{lema}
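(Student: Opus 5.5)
This is a pointwise estimate, and I will prove it by splitting into the near-field regime $|z|<r$ and the far-field regime $|z|\ge r$, and then treating the near field according to the value of $\nu$.

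\emph{Far field, $|z|\ge r$.} This case is immediate. The definition $\delta^2u(x,z)=u(x+z)+u(x-z)-2u(x)$ is a signed sum of four values of $u$, counting $u(x)$ twice. Hence $|\delta^2u(x,z)|\le 4\|u\|_{L^\infty(\R^N)}$.

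\emph{Near field, $|z|<r$, so that $x\pm z\in B_r(x)$.}
\begin{itemize}
\item If $\nu=0$, I write
\[
\delta^2u(x,z)=\big(u(x+z)-u(x)\big)+\big(u(x-z)-u(x)\big).
\]
Each bracket is bounded by $[u]_{C^{\sigma+\alpha}(B_r(x))}|z|^{\sigma+\alpha}$, so their sum gives the factor $2$.
\item If $\nu=1$, set $g(t):=u(x+tz)+u(x-tz)$ for $t\in[0,1]$. Then $\delta^2u(x,z)=g(1)-g(0)=\int_0^1 g'(t)\,dt$, with
\[
g'(t)=\big(\nabla u(x+tz)-\nabla u(x-tz)\big)\cdot z .
\]
The points $x\pm tz$ lie in $B_r(x)$ and are at distance $2t|z|$ apart. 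The H\"older seminorm of $\nabla u$ therefore gives
\[
|g'(t)|\le [u]_{C^{\sigma+\alpha}(B_r(x))}\,(2t|z|)^{\sigma+\alpha-1}\,|z|.
\]
Integrating in $t$ yields the constant $2^{\sigma+\alpha-1}/(\sigma+\alpha)$, which is at most $2$ since $\sigma+\alpha-1\in(0,1)$. Alternatively, I can subtract $\nabla u(x)\cdot z$ from each of the two first-order increments and use the mean value theorem; this gives the same bound.
\item If $\nu=2$, I use Taylor's formula with integral remainder, since $u\in C^2(B_r(x))$:
\[
\delta^2u(x,z)=\int_0^1(1-t)\big(\langle D^2u(x+tz)z,z\rangle+\langle D^2u(x-tz)z,z\rangle\big)\,dt.
\]
This is bounded by $\|D^2u\|_{L^\infty(B_r(x))}|z|^2$, with the norm of the Hessian understood in the operator sense. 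If one instead uses the maximum over entries, a dimensional constant appears; I would note this harmless convention.
\end{itemize}

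No step is a genuine obstacle. The only thing I need to be careful about in the $\nu=1$ case is to check that the constant in the H\"older bound really comes out as at most $2$.
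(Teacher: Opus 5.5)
Your proof is correct and is essentially the paper's own argument: the same near/far split, the same two-increment decomposition for $\nu=0$, the same fundamental-theorem-of-calculus identity with integrand $(\nabla u(x+tz)-\nabla u(x-tz))\cdot z$ for $\nu=1$, and a second-order Taylor expansion for $\nu=2$. The only cosmetic difference is that the paper writes the $\nu=2$ remainder as $\int_0^1\int_{-t}^{t}\langle D^2u(x+sz)z,z\rangle\,ds\,dt$ rather than in your $(1-t)$-weighted form; this gives the same constant. Your remark on the norm convention for $D^2u$ applies equally to the vector-valued H\"older seminorm of $\nabla u$ in the case $\nu=1$, which the paper also uses implicitly.
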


\begin{proof}
For $|z|\ge r$, we only use the boundedness of $u$:
\[
	|\delta^2u(x,z)| \ \le \ |u(x+z)| + |u(x-z)| + 2|u(x)| \ \le \ 4\,\|u\|_{L^\infty(\R^N)}.
\]
Let now $|z|<r$, so that $x \pm tz \in B_r(x)$ for all $t\in[0,1]$.

\emph{Case $\nu = 0$.} Writing $\delta^2u(x,z) = \big[u(x+z)-u(x)\big] - \big[u(x)-u(x-z)\big]$, we get
\[
	|\delta^2u(x,z)| \ \le \ |u(x+z)-u(x)| + |u(x)-u(x-z)| \ \le \ 2\,[u]_{C^{\sigma+\alpha}(B_r(x))}\,|z|^{\sigma+\alpha}.
\]

\emph{Case $\nu = 1$.} Write $\sigma+\alpha = 1+\gamma$ with $\gamma \in (0,1)$, so that $[u]_{C^{\sigma+\alpha}(B_r(x))} = [Du]_{C^{\gamma}(B_r(x))}$. By the fundamental theorem of calculus,
\[
	\delta^2u(x,z) \ = \ \int_0^1 \big[Du(x+tz) - Du(x-tz)\big]\cdot z \; dt,
\]
and since $|Du(x+tz)-Du(x-tz)| \le [Du]_{C^{\gamma}(B_r(x))}\,|2tz|^{\gamma}$, we get
\[
	|\delta^2u(x,z)| \ \le \ 2^{\gamma}\,[Du]_{C^{\gamma}(B_r(x))}\,|z|^{1+\gamma} \int_0^1 t^{\gamma}\,dt
	\ \le \  2\, [u]_{C^{\sigma+\alpha}(B_r(x))}\,|z|^{\sigma+\alpha}.
\]

\emph{Case $\nu = 2$.} By Taylor's formula at second order,
\[
	\delta^2u(x,z) \ = \ \int_0^1 \int_{-t}^{t} \big\langle D^2u(x+sz)\, z, z \big\rangle \, ds\, dt,
\]
so that
\[
|\delta^2u(x,z)| \le \|D^2u\|_{L^\infty(B_r(x))}\,|z|^2 \int_0^1 2t\,dt = \|D^2u\|_{L^\infty(B_r(x))}\,|z|^2.
\]
\end{proof}

{While Lemma~\ref{lem:near-far} bounds $\delta^2u$ at a single point $x$, the Hölder-in-$x$ estimates  require bounding the \emph{difference} $\delta^2u(x,z)-\delta^2u(x',z)$ between two nearby points $x,x'$. This is the content of the following lemma.}

\begin{lema}\label{lem:delta2-increments}
Let $\sigma+\alpha \in (0,3)\setminus\{1,2\}$, set $\nu := \lfloor \sigma+\alpha \rfloor \in \{0,1,2\}$, and let
$u \in C^{\sigma+\alpha}(B) \cap C_b^\alpha(\R^N)$. Define
$\beta = \min\{\sigma+\alpha,2\}$ and $\beta' = \min\{\sigma+\alpha,1\}$.
Then, for all $x, x' \in B_{1/2}$ with $d := |x-x'| \leq 1/2$,
\begin{equation*}
|\delta^2u(x, z) - \delta^2u(x', z)| \ \leq \
\begin{cases}
4\, [u]_{C^{\sigma + \alpha}(B)}\, d^{\sigma+\alpha-\beta}\, |z|^{\beta} & \text{if } |z| \leq d, \\[1mm]
4\, [u]_{C^{\sigma + \alpha}(B)}\, d^{\beta'}\, |z|^{\sigma + \alpha - \beta'} & \text{if } d < |z| \leq 1/2, \\[1mm]
4\, \|u\|_{C^\alpha(\R^N)}\, d^\alpha & \text{if } |z| \geq 1/2.
\end{cases}
\end{equation*}
\end{lema}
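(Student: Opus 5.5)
The plan is to split according to the size of $|z|$ relative to $d$ and $1/2$, as in the statement. We write $h := x'-x$. In each regime we either bound the increment directly from the H\"older regularity of $u$, or we use the difference function $g(y) := u(y+h)-u(y)$, for which
\[
\delta^2u(x',z)-\delta^2u(x,z) = \delta^2 g(x,z).
\]
Throughout, we check that all points involved stay in $B$. Since $x,x'\in B_{1/2}$, every point $x+sh$, $s\in[0,1]$, lies in $B_{1/2}$ by convexity. Hence $x+sh\pm tz\in B$ whenever $|z|\le 1/2$ and $t\in[0,1]$.

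\emph{Far region $|z|\ge 1/2$.} Here we only use the global H\"older norm of $u$. We expand
\[
\delta^2u(x,z)-\delta^2u(x',z)=[u(x+z)-u(x'+z)]+[u(x-z)-u(x'-z)]-2[u(x)-u(x')].
\]
Each bracket is bounded by $[u]_{C^\alpha(\R^N)}d^\alpha$, which gives the bound $4\|u\|_{C^\alpha(\R^N)}d^\alpha$.

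\emph{Near region $|z|\le d$.}
\begin{itemize}
\item For $\nu\in\{0,1\}$ we have $\beta=\sigma+\alpha$, so the claim reads $4[u]\,|z|^{\sigma+\alpha}$. We apply Lemma~\ref{lem:near-far} at $x$ and at $x'$ separately, with $r=1/2$; this is admissible since $B_{1/2}(x),B_{1/2}(x')\subset B$. We then add the two bounds.
\item For $\nu=2$ we use Taylor's formula as in the proof of Lemma~\ref{lem:near-far}:
\[
\delta^2u(x,z)-\delta^2u(x',z)=\int_0^1\!\!\int_{-t}^{t}\big\langle (D^2u(x+sz)-D^2u(x'+sz))z,z\big\rangle\,ds\,dt .
\]
We bound the difference of Hessians by $[u]_{C^{\sigma+\alpha}(B)}d^{\sigma+\alpha-2}$. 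This gives $[u]\,d^{\sigma+\alpha-2}|z|^2$, which matches the claimed $d^{\sigma+\alpha-\beta}|z|^\beta$.
\end{itemize}

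\emph{Intermediate region $d<|z|\le 1/2$.} This is the only case requiring some care.
\begin{itemize}
\item For $\nu=0$ we have $\beta'=\sigma+\alpha$. We argue as in the far region, now using $[u]_{C^{\sigma+\alpha}(B)}$ on each of the three brackets, which gives $4[u]\,d^{\sigma+\alpha}$.
\item For $\nu\ge1$ we have $\beta'=1$. We write $g(y)=\int_0^1 Du(y+sh)\cdot h\,ds$, so that
\[
\delta^2 g(x,z)=\int_0^1\big[Du(a_s+z)+Du(a_s-z)-2Du(a_s)\big]\cdot h\,ds,\qquad a_s:=x+sh\in B_{1/2}.
\]
The bracket is $\delta^2(Du)(a_s,z)$. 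For $\nu=1$, since $Du\in C^{\sigma+\alpha-1}(B)$, it is bounded by $2[u]_{C^{\sigma+\alpha}(B)}|z|^{\sigma+\alpha-1}$. For $\nu=2$ the same bound holds by the $\nu=1$ case of Lemma~\ref{lem:near-far} applied to $Du$, whose derivative is $C^{\sigma+\alpha-2}$. Multiplying by $|h|=d$ yields $2[u]\,d\,|z|^{\sigma+\alpha-1}$.
\end{itemize}

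The main (mild) obstacle is this intermediate case for $\nu\ge1$: the point is to move the increment in $x$ onto a first derivative, producing the factor $d$, rather than estimating the second differences at $x$ and $x'$ separately, which would only give $|z|^{\sigma+\alpha}$ and lose the factor $d$. The rest is bookkeeping of constants, all of which are at most $4$.
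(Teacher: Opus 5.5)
Your proposal is correct and follows essentially the same route as the paper. You use the same three-regime split in $|z|$, Lemma~\ref{lem:near-far} with $r=1/2$ and the second-order Taylor formula in the near region, the three-term H\"older estimate for $\nu=0$ and for $|z|\ge 1/2$, and, for $\nu\ge1$ in the intermediate region, the transfer of the increment in $x$ onto $\delta^2(Du)$; your integral formula for $\delta^2 g(x,z)$ is just the paper's mean value theorem for $w(y):=\delta^2u(y,z)$ along the segment $[x',x]$, written out via the fundamental theorem of calculus.
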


\begin{proof}
In the first two cases $|z|\le 1/2$ (in the first one because $|z|\le d\le 1/2$), so that all the
points $x\pm tz$, $x'\pm tz$, $t\in[-1,1]$, lie in $B$.

\emph{Case $|z| \le d$.} If $\nu \in \{0,1\}$, then $\beta=\sigma+\alpha$, and the bound
$4[u]_{C^{\sigma+\alpha}(B)}|z|^{\sigma+\alpha}$ follows by applying Lemma~\ref{lem:near-far}
(with $r = 1/2$) at $x$ and at $x'$, and using the triangle inequality.
If $\nu = 2$, then $\beta = 2$; write $\gamma := \sigma+\alpha-2\in(0,1)$. By Taylor's formula at second order,
\[
	\delta^2u(x,z)-\delta^2u(x',z)
	 =  \int_0^1\!\!\int_{-t}^{t} \big\langle \big(D^2u(x+sz)-D^2u(x'+sz)\big) z, z \big\rangle \, ds\, dt,
\]
so that
\[
	|\delta^2u(x,z)-\delta^2u(x',z)| \le [D^2u]_{C^\gamma(B)}\, d^\gamma\,|z|^2
	= [u]_{C^{\sigma+\alpha}(B)}\, d^{\sigma+\alpha-2}|z|^2.
\]

\emph{Case $d < |z| \le 1/2$.} If $\nu = 0$, then $\beta'=\sigma+\alpha$ and
\[
	|\delta^2u(x,z)-\delta^2u(x',z)|
	\le  |u(x+z)-u(x'+z)| + |u(x-z)-u(x'-z)| + 2|u(x)-u(x')|
	\le 4[u]_{C^{\sigma+\alpha}(B)}\, d^{\sigma+\alpha}.
\]
If $\nu \in \{1,2\}$, then $\beta'=1$. Set $w(y) := \delta^2u(y,z)$ for $y\in B_{1/2}$. Then
$Dw(y) = \delta^2(Du)(y,z)$, and Lemma~\ref{lem:near-far}, applied to $Du \in C^{\sigma+\alpha-1}(B)$, gives
\[
	|Dw(y)| \le 2[u]_{C^{\sigma+\alpha}(B)}\,|z|^{\sigma+\alpha-1}.
\]
By the mean value theorem along the segment $[x',x]\subset B_{1/2}$,
\[
	|\delta^2u(x,z)-\delta^2u(x',z)| \ = \ |w(x)-w(x')| \ \le \ 2[u]_{C^{\sigma+\alpha}(B)}\,|z|^{\sigma+\alpha-1}\, d.
\]

\emph{Case $|z| \ge 1/2$.} Here the points $x\pm z$, $x'\pm z$ need not lie in $B$, but the same
three-term estimate as in the case $\nu=0$ above, with the $C^\alpha$ seminorm of $u$ on $\R^N$ in place
of the $C^{\sigma+\alpha}$ seminorm on $B$, gives
\[
	|\delta^2u(x,z)-\delta^2u(x',z)| \le 4[u]_{C^\alpha(\R^N)}\, d^\alpha \le 4\|u\|_{C^\alpha(\R^N)}\, d^\alpha.
\]
\end{proof}

\bigskip\subsection*{Acknowledgments}

A. Ciomaga is partially supported by the ANR (Agence Nationale de la Recherche) through the COSS project ANR-22-CE40-0010 and by the Research Grant GAR2023 - code 73 - from the Donors' Recurrent Fund, at the disposal of the Romanian Academy and managed through the ``PATRIMONIU'' Foundation and by CNPq Grant 408169/2023-0.
A. Rodriguez-Paredes was partially supported by Fondecyt Grant Iniciaci\'on No. 11231065.
E. Topp is partially supported by CNPq Grant 306022/2023-0, CNPq Grant 408169/2023-0, FAPERJ APQ1 Grant 210.573/2024 and FAPERJ Grant E-26/204.215/2025.

\bibliographystyle{plain}
\bibliography{references}
\end{document}